\newcommand{\gl}{{\mathfrak g \mathfrak l}}
\newcommand{\so}{{\mathfrak s \mathfrak o}}
\newcommand{\su}{{\mathfrak s  \mathfrak u}}
\newcommand{\cx}{{\mathbb C}}
\newcommand{\re}{\operatorname{Re}}
\newcommand{\im}{\operatorname{Im}}
\newcommand{\End}{\operatorname{End}}
\newcommand{\Ker}{\operatorname{Ker}}
\newcommand{\D}{{\partial}}
\newcommand{\supp}{\operatorname{supp}}
\newcommand{\Gr}{\operatorname{Gr}}
\newcommand{\Rat}{\operatorname{Rat}}
\newcommand{\wh}{\widehat}
\newcommand{\ol}{\overline}
\numberwithin{equation}{section}
\newtheorem{theorem}{Theorem}[section]
\newtheorem{lemma}[theorem]{Lemma}
\newtheorem{corollary}[theorem]{Corollary}
\newtheorem{proposition}[theorem]{Proposition}
\theoremstyle{remark}
\newtheorem{remark}[theorem]{Remark}
\newtheorem{definition}[theorem]{Definition}
\newtheorem{example}[theorem]{Example}
\newcommand{\oC}{{\mathbb{C}}}
\newcommand{\oH}{{\mathbb{H}}}
\newcommand{\oN}{{\mathbb{N}}}
\newcommand{\oP}{{\mathbb{P}}}
\newcommand{\oR}{{\mathbb{R}}}
\newcommand{\oZ}{{\mathbb{Z}}}
\newcommand{\sC}{{\mathcal{C}}}   
\newcommand{\sD}{{\mathcal{D}}}
\newcommand{\sE}{{\mathcal{E}}}
\newcommand{\sF}{{\mathcal{F}}}
\newcommand{\sG}{{\mathcal{G}}}   
\newcommand{\sJ}{{\mathcal{J}}}
\newcommand{\sK}{{\mathcal{K}}}
\newcommand{\sL}{{\mathcal{L}}}   
\newcommand{\sM}{{\mathcal{M}}}   
\newcommand{\sN}{{\mathcal{N}}}
\newcommand{\sO}{{\mathcal{O}}}
\newcommand{\sT}{{\mathcal{T}}}
\newcommand{\sU}{{\mathcal{U}}}
\newcommand{\sV}{{\mathcal{V}}}
\newcommand{\sW}{{\mathcal{W}}}
\newcommand{\fG}{{\mathfrak{g}}}
\begin{document}

\title{Pluricomplex geometry and hyperbolic monopoles}
\author{Roger Bielawski \& Lorenz Schwachh\"ofer}
\address{School of Mathematics\\
University of Leeds\\Leeds LS2 9JT\\ UK}
\address{Fakult\"at f\"ur Mathematik\\ TU Dortmund\\
	D-44221 Dortmund\\ Germany}


\begin{abstract} Motivated by  strong desire to understand the natural geometry of moduli spaces of hyperbolic monopoles, we introduce and study a new type of geometry: {\em pluricomplex geometry}. 
\end{abstract}

\subjclass[2000]{53C26, 53C28, 32L25, 81E13, 14F05, 14H60}
\maketitle

\thispagestyle{empty}

\tableofcontents

\section{Introduction\label{intro}}

In this paper, we shall be concerned with a new type of differential geometry, which we call {\em pluricomplex geometry}. It is a generalisation of hypercomplex ge\nolinebreak ometry: we still have a $2$-sphere of complex structures, but they no longer behave like unit imaginary quaternions. We still require, however,  that the $2$-sphere of complex structures determines a decomposition of the complexified tangent space as $\cx^{2n}\otimes\cx^2$.
\par
It turns out that the geometry of such structures is very rich and can be profitably studied from at least two points of view. On the one hand,  the pluricomplex geometry of a manifold $M$ is the same as a special type of  hypercomplex geometry on the bundle
$$ T_\Delta M=\{(v,\bar v); v\in T^\cx M\}.$$
In fact, an integrable pluricomplex structure (i.e. a $2$-sphere of integrable complex structures satisfying the above condition) on a manifold $M$ can be viewed as an integrable hypercomplex structure on an complex thickening $M^\cx$ of $M$, commuting with the tautological complex structure of $M^\cx$ (thus, the pluricomplex geometry of $M$ can be viewed as a {\em biquaternionic} geometry of $M^\cx$). It follows, remarkably enough, that  any integrable pluricomplex structure has an associated canonical torsion-free connection (generally without special holonomy). 
\par
From another point of view,  a pluricomplex structure on a vector space $V$ corresponds to a coherent sheaf $\sF$ on an algebraic curve $S$ in $\oP^1\times \oP^1$, satisfying certain vanishing conditions. Writing $V^{1,0}_\zeta$ and $V^{0,1}_\zeta$ for vectors of type $(1,0)$ and $(0,1)$ for the complex structure $J_\zeta$, $\zeta\in \oP^1$, we can describe the curve $S$ as
$$ S=\{(\zeta,\eta)\in \oP^1\times \oP^1;\; V^{1,0}_\zeta\cap V^{0,1}_{-1/\bar \eta}\neq 0\},$$
while the stalk of $\sF$ at $(\zeta,\eta)\in S$ is $V^\cx/(V^{1,0}_\zeta+V^{0,1}_{-1/\bar\eta})$.
\par
We call $S$ the {\em characteristic curve} and  $\sF$ the {\em characteristic sheaf} of the pluricomplex structure. Hypercomplex structures can be characterised by the property that their characteristic curve is the diagonal $\oP^1\subset \oP^1\times \oP^1$.
\par
This point of view allows us to consider pluricomplex manifolds which have twistor spaces, i.e. can be recovered as a family of complex curves in a complex manifold $Z$.  The curve corresponding to $m\in M$ will be the characteristic curve of the pluricomplex structure on $T_mM$, while its normal bundle in $Z$ will be the characteristic sheaf. We call pluricomplex structures with a twistor space {\em strongly integrable}. For such structures, we discuss vector bundles on $M$ induced from holomorphic bundles on $Z$. We show that a large chunk of the twistor theory for self-dual or quaternionic manifolds still goes through. In particular, a vector bundle on $Z$, trivial on each twistor curve, still leads to a canonical linear connection on the induced bundle over $M$. A generalisation of a different type says that we can expect to get canonical linear connections on vector bundles induced from bundles on $Z$, which have a particular degree on each twistor curve (this degree is $0$ for quaternionic manifolds). We remark that these twistor curves will usually have positive genus.
\par
Characteristic sheaves of pluricomplex structures are also interesting from a purely algebraic point of view. In the appendix we show that they satisfy a rather strong regularity condition: for any $p,q\in \oZ$ with $p+q=-2$,  $\sF(p,q)$ has trivial cohomology.
\par
Let us point out here that {\em complexified} pluricomplex structures coincide with objects studied by many authors and known variously as  GS-structures \cite{M},  paraconformal structures \cite{Ba-E} or $SL(2)$-webs \cite{JV}. The {\em real} pluricomplex geometry is, however, very different from all of these.
\par
Throughout the paper we are motivated by a particular example: the moduli space $\sM_{k,m}$ of (framed) $SU(2)$-monopoles of charge $k$ on the hyperbolic $3$-space with curvature $-1/m^2$ \cite{A,MS1,MS2,Nash}. It is well known that the moduli space of euclidean monopoles $\sM_k=\sM_{k,\infty}$ has a natural hyperk\"ahler metric, which is of great physical significance. The moduli space of hyperbolic monopoles is a deformation of $\sM_k$. Furthermore, it can be constructed via twistor methods (at least if $2m\in \oZ$), which leads one to expect a natural geometry. In \cite{Hit},  Hitchin  constructed a natural self-dual Einstein metric on the moduli space of charge $2$ {\em centred} hyperbolic monopoles. The general case, however, has resisted a solution. A significant progress has been made by O. Nash \cite{Nash}, who found a new twistorial construction of $\sM_{k,m}$ and described\footnote{See, however,   Remark \ref{Nash-gap}.} the complexification of the natural geometry of $\sM_{k,m}$, whatever it might be. In fact, Nash writes \cite[p.1672]{Nash}: ``{\em 
The geometry we have identified on the hyperbolic monopole moduli space appears to
be a real geometry whose complexification is very like the complexification of hyperk\"ahler
geometry but which is subtly different}".
\par
The original aim of the present work was to identify this real geometry. As we show in Section \ref{hyperbolic}, it is a strongly integrable pluricomplex geometry\footnote{Strictly speaking, we only show this for an open dense subset of $\sM_{k,m}$. Its complement has a weaker structure, which is pluricomplex if certain vanishing theorem holds: see  Section \ref{hyperbolic}.}. Thus, in particular, there is a natural torsion-free connection on $\sM_{k,m}$.

\bigskip

{\it Acknowledgements.} This work has been supported, at various stages, by the {\em Connect} programme of the Humboldt Foundation, and the Gambrinus Fellowship from TU Dortmund. The authors thank both institutions and the University of Leeds for making the collaboration possible. 
\par
 We also thank Michael Atiyah, Nigel Hitchin and Michael Singer for comments.


\bigskip

{\em Remark on terminology.} We work in the  complex analytic category. By a {\em curve} we mean a compact complex space $(S,\sO_S)$ (not necessarily smooth or reduced), such that $\dim_x S=1$ at every point $x\in S$. Except for a few occasions, we do not distinguish between vector bundles and corresponding sheaves of local sections.

\section{$2$-spheres of complex structures and sheaves on $\oP^1\times \oP^1$\label{spheres}}

Let $V$ be a $2n$-dimensional real vector space. Associated to $V$ is its twistor space $\sJ(V)\simeq GL(2n,\oR)/GL(n,\cx)$, i.e. the space of complex structures on $V$. For any complex structure $J\in \sJ(V)$ we have
\begin{equation} T_J\sJ(V)=\{A\in \End V;\: JA=-AJ\}.\label{TJ}\end{equation}
Left multiplication by $J$ on $T_J\sJ(V)$ squares to $-1$ and defines a canonical (integrable) complex structure on $\sJ(V)$ (in fact, $\sJ(V)$ is a complex symmetric space).
We denote by $\sV^{1,0}\to \sJ(V)$ the tautological $i$-eigenbundle, i.e. a subbundle of $\underline{V^\cx}= V^\cx\times\sJ(V)$ with fibre $\sV^{1,0}_J=V^{1,0}(J)=\{X-iJX; X\in V\}$. 
\par
We can also view $\sJ(V)$ as an open dense subset of $\Gr_n(V^\cx)$, so that $\sV^{1,0}$ is the usual tautological bundle on a Grassmannian.

We shall now study arbitrary holomorphic $\oP^1$-s of complex structures, i.e. holomorphic maps $K:\oP^1\to \sJ(V)$ such that $\im K$ is not a point. 
 For every $\zeta\in \oP^1$, let us write $J_\zeta=K(\zeta)$ and $V^{1,0}_\zeta=\{X-iJ_\zeta X; X\in V\}$ - the subspace  of vectors of type $(1,0)$ for $J_\zeta$.
\par
For any such $K$, we may consider simultaneously a second $\oP^1$ of complex structures, defined by $\hat{J}_\zeta=-J_{\sigma(\zeta)}$, where $\sigma:\oP^1\to \oP^1$ is the antipodal map, $\sigma(\zeta)=-1/\bar\zeta$. The resulting $\wh K:\oP^1\to \sJ(V)$ is also holomorphic. We denote by $ \wh{V}^{1,0}_\zeta$ the subspace of vectors of type $(1,0)$ for $\hat{J}_\zeta$. In other words
$$\wh{V}^{1,0}_\zeta= V^{0,1}_{-1/\bar\zeta}.$$
\par
Let $W$ denote the holomorphic bundle on $\oP^1\times \oP^1$, the fibre of which at $(\zeta,\eta)$ is $V^{1,0}_\zeta\oplus\wh{V}^{1,0}_\eta$. In other words 
\begin{equation} W=(K\circ \pi_1)^\ast  \sV^{1,0}\oplus (\wh K\circ \pi_2)^\ast  \sV^{1,0}, \label{E}\end{equation}
where $\pi_1,\pi_2:\oP^1\times \oP^1\to \oP^1$ are the two projections.
We have a natural map 
$$ W\to O\otimes V^\cx$$ 
of vector bundles on $\oP^1\times \oP^1$, which induces an injective map $\sW\to \sO\otimes V^\cx$ on the sheaves of sections.
We denote by $\sF$ the cokernel of this map, so that we have an exact sequence
\begin{equation}0\to \sW {\rightarrow}\sO\otimes V^\cx\to \sF\to 0. \label{F0}\end{equation}
From \eqref{E}:
\begin{equation} W\simeq  \bigoplus_{j=1}^{n}\sO(-p_j,0)\oplus \bigoplus_{j=1}^{n}\sO(0,-p_j)\label{E-split}\end{equation}
for some nonnegative integers $p_j$, so that we can view \eqref{F0} as the following resolution of $\sF$:
\begin{equation} 0\to \bigoplus_{j=1}^{n}\sO(-p_j,0)\oplus \bigoplus_{j=1}^{n}\sO(0,-p_j)\stackrel{M(\zeta,\eta)}{\longrightarrow} \sO^{\oplus 2n}\longrightarrow \sF\to 0.\label{F1}
\end{equation}
Here $M(\zeta,\eta)$ is a $2n\times 2n$-matrix, and $M_{ij}$ is a polynomial of degree $p_i$ in $\zeta$ if $j\leq n$, 
and a polynomial of degree $p_i$ in $\eta$ if $j> n$. It is clear that, as a set, the support of $\sF$ is 
$$ S=\{(\zeta,\eta)\in \oP^1\times \oP^1;\; \det M(\zeta,\eta)=0\}.$$
Its analytic structure can be, however, different. To describe it, let us write $\det M(\zeta,\eta)=\prod_{i=1}^s q_i(\zeta,\eta)^{k_i}$, where $q_i$ are irreducible polynomials. We define the {\em minimal polynomial} $p_M(\zeta,\eta)$ of $M$ as
$\prod_{i=1}^s q_i(\zeta,\eta)^{l_i}$, where 
\begin{eqnarray*}l_i & = & \max\{a_ib_i;\; \text{at a generic point, $M(\zeta,\eta)$ has a Jordan block of size $a_i$}\\ & & \text{with eigenvalue $q_i(\zeta,\eta)^{b_i}$}\}.\end{eqnarray*}
Let us also recall the notion of a $\sigma$-sheaf:
\begin{definition} Let $X$ be a complex manifold equipped with an antiholomorphic  involution $\sigma$. 
A $\sigma$-sheaf on $X$ is a coherent sheaf $\sG$, such that the linear space associated to $\sG$ is equipped with an antilinear automorphism $\sigma_{\sG}$ covering $\sigma$ and satisfying $\sigma_\sG^2={\rm Id}$. \label{ssigma}\end{definition}
On $\oP^1\times \oP^1$, we have a canonical antiholomorphic involution
\begin{equation} \sigma(\zeta,\eta)=\bigl(\sigma(\eta),\sigma(\zeta)\bigr)=(-1/\bar\eta,-1/\bar \zeta),\label{sigma}\end{equation}
the fixed point set of which is the antidiagonal $\ol\Delta=\{(\zeta,\sigma(\zeta))\}$. 
\par

We can now summarise the properties $S$ and $\sF$ as follows
\begin{proposition}\begin{itemize}
\item[(i)] 
The support of $\sF$ is the $\sigma$-invariant curve  $\bigl(S,\sO_{\oP^1\times \oP^1}/(p_M)\bigr)$.  Furthermore
$S=\bigl\{(\zeta,\eta)\in \oP^1\times \oP^1; \; V^{1,0}_\zeta\cap \wh{V}^{1,0}_\eta\neq 0\bigr\},$
and, consequently, $S$ does not intersect $\ol\Delta$.
\item[(ii)] $\sF$ is a  $\sigma$-sheaf on $\oP^1\times \oP^1$ such that $H^0(\sF(-1,-1))=H^1(\sF(-1,-1))=0$.
\end{itemize}
\label{S}
\end{proposition}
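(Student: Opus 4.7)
My plan is to tackle the two parts separately, first unpacking the map in \eqref{F0} at the level of fibers to get the set-theoretic and scheme-theoretic description of the support, then handling the $\sigma$-sheaf property and the cohomology vanishing.

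For the set-theoretic description of $S$, I would note that, fiberwise at $(\zeta,\eta)$, the map $\sW\to\sO\otimes V^\cx$ is simply $(u,v)\mapsto u+v$ from $V^{1,0}_\zeta\oplus\wh V^{1,0}_\eta$ into $V^\cx$, with kernel $\{(w,-w)\colon w\in V^{1,0}_\zeta\cap\wh V^{1,0}_\eta\}$. Since source and target both have (complex) dimension $2n$, the map fails to be surjective precisely when this intersection is non-zero, and then by Nakayama the stalk of $\sF$ is non-zero there; this gives the stated description of $S$. Because $V$ is real, $\overline{V^{1,0}_\zeta}=V^{0,1}_\zeta$, and at a point of $\ol\Delta$ the condition $\eta=-1/\bar\zeta$ forces $\wh V^{1,0}_\eta=V^{0,1}_\zeta=\overline{V^{1,0}_\zeta}$; but $V^{1,0}_\zeta\oplus V^{0,1}_\zeta=V^\cx$ is a direct sum, so the intersection vanishes and $S\cap\ol\Delta=\emptyset$.

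For the $\sigma$-sheaf property, I would observe that complex conjugation on $V^\cx$ sends $V^{1,0}_\zeta$ to $V^{0,1}_\zeta=\wh V^{1,0}_{\sigma(\zeta)}$, and hence induces an antilinear bundle isomorphism $W\to\sigma^\ast W$ that swaps the two summands in \eqref{E}. Together with the obvious antilinear involution on $\sO\otimes V^\cx$, this is compatible with the map $\sW\to\sO\otimes V^\cx$ and therefore descends to an antilinear automorphism of $\sF$ covering $\sigma$ with square the identity, so that $\sF$ is a $\sigma$-sheaf (and $S$ is automatically $\sigma$-invariant). To identify the analytic structure on the support with $\sO_{\oP^1\times\oP^1}/(p_M)$, I would argue locally: at a generic point of each irreducible component $V(q_i)$ of $V(\det M)$ one puts $M$ in Jordan (or rational canonical) form, and a block of size $a_i$ with eigenvalue vanishing to order $b_i$ along $q_i$ contributes $(q_i^{a_ib_i})$ to $\Ann(\Coker M)$; assembling the contributions of all components and all Jordan blocks gives $\Ann(\sF)=(p_M)$.

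For (ii) I would twist \eqref{F1} by $\sO(-1,-1)$ and apply the K\"unneth formula. Since $H^\ast(\oP^1,\sO(-1))=0$, every line bundle $\sO(a,-1)$ or $\sO(-1,b)$ on $\oP^1\times\oP^1$ is acyclic; in particular every summand of $\sW(-1,-1)$, as well as $\sO(-1,-1)\otimes V^\cx$, has vanishing cohomology in all degrees. The long exact sequence of the twisted \eqref{F0} then forces $H^0(\sF(-1,-1))=H^1(\sF(-1,-1))=0$.

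The main obstacle will be the scheme-theoretic identification $\Ann(\sF)=(p_M)$: the base $\oP^1\times\oP^1$ is two-dimensional, $S$ is typically singular and possibly non-reduced, and the definition of $p_M$ is phrased in terms of Jordan data at generic points of each component, so one has to control carefully how the local Jordan decomposition of $M$ glues across intersection points of the components, rather than reading off the annihilator only at smooth generic points.
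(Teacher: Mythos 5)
Your argument follows essentially the same route as the paper's (whose proof is only a few lines long): the fibrewise identification of the support and its disjointness from $\ol\Delta$, complex conjugation swapping the two summands of $W$ to produce the $\sigma$-structure, and twisting \eqref{F1} by $\sO(-1,-1)$ and taking the long exact sequence for the vanishing; all of these steps are correct as you state them.

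The one step you flag as the main obstacle --- the scheme-theoretic identification $\operatorname{Ann}(\sF)=(p_M)$ at the non-generic points of $S$ --- is also passed over in silence by the paper (``clear from the definition''), so you are not missing an argument that the authors supply. It can, however, be closed without any Jordan-form gluing. Since annihilators of coherent sheaves localize, your elementary-divisor computation at the generic point of each component shows that $p_M\cdot\sF$ is supported in the finite set of non-generic points, i.e.\ is a zero-dimensional subsheaf of $\sF$. But part (ii), which you prove independently, gives $H^0(\sF(-1,-1))=0$, so $\sF$ has no nonzero zero-dimensional subsheaves (this purity of $\sF$ is exactly what the paper invokes in the proof of the following proposition); hence $p_M\cdot\sF=0$ and $p_M\in\operatorname{Ann}(\sF)$ everywhere. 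Conversely, any $f\in\operatorname{Ann}(\sF)$ lies in $(q_i^{l_i})$ in the local ring at the generic point of each component $V(q_i)$, and since $(q_i^{l_i})$ is $q_i$-primary this forces $q_i^{l_i}\mid f$ for every $i$, whence $f\in(p_M)$. This yields $\operatorname{Ann}(\sF)=(p_M)$ globally and completes your part (i).
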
 
\begin{proof} Part (i) is clear from the definition, once we recall that the complex subspace $(S,\sO_S)$ is the zero-locus of the annihilator (in $\sO_X$) of $\sF$. For part (ii), we observe, from the defining sequence \eqref{F0},  that both $\sW$ and $\sO\otimes V^\cx$ are naturally $\sigma$-sheaves and the map between them is compatible with the automorphisms $\sigma_\sW$ and $\sigma_{\sO\otimes V^\cx}$. Therefore $\sF$ is a $\sigma$-sheaf. Finally, for the vanishing of cohomology, we tensor \eqref{F1} with $\sO(-1,-1)$ and take the long exact sequence of  cohomology.
\end{proof}

\begin{corollary} Suppose that the  curve $S$ is integral. Then
the sheaf $\sF$ is locally free on $S$. \label{VB0}\end{corollary}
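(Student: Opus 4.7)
The plan is to show $\sF$ is a maximal Cohen--Macaulay $\sO_S$-module of generic rank $k_1$ and then conclude local freeness via the Fitting-ideal criterion. Integrality of $S$ means that $p_M = q_1$ is irreducible with $l_1 = 1$, so $\sO_S = \sO_{\oP^1\times\oP^1}/(q_1)$ and $\mathrm{Ann}_{\sO_X}(\sF) = (q_1)$; in particular $\sF$ is naturally an $\sO_S$-module. From the resolution \eqref{F1}, the stalk $\sF_x$ has projective dimension $1$ over the $2$-dimensional regular local ring $R = \sO_{X,x}$, and the Auslander--Buchsbaum formula yields $\operatorname{depth}_R(\sF_x) = 1$. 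Since $q_1$ annihilates $\sF_x$, this depth persists over $\sO_{S,x} = R/(q_1)$, a one-dimensional Cohen--Macaulay ring, so $\sF$ is MCM over $\sO_S$.

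Next I would compute the generic rank at the generic point $\xi$ of $S$, where $R_\xi$ is a DVR with uniformiser $q_1$. Smith normal form over $R_\xi$ puts $M$ into the diagonal form $\diag(q_1^{a_1}, \ldots, q_1^{a_{2n}})$; the condition $l_1 = 1$ on the minimal polynomial forces $a_i \in \{0, 1\}$, while $\det M = q_1^{k_1}\cdot(\text{unit})$ gives $\sum a_i = k_1$. Consequently $\sF_\xi \cong k(S)^{k_1}$, i.e.\ $\sF$ has generic rank $k_1$ over $\sO_S$.

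By the Fitting-ideal criterion, $\sF$ is locally free of rank $k_1$ at $x\in S$ iff the Fitting ideals satisfy $F_{k_1-1}(\sF) \subseteq (q_1)$ and $F_{k_1}(\sF) + (q_1) = R_x$ locally at $x$. The first condition holds globally on $X$: the $(2n-k_1+1)$-minors of $M$ all vanish at $\xi$ (since the rank of $M$ at $\xi$ equals $2n-k_1$), and as $q_1$ is irreducible in the UFD $R$ and $\xi$ is the generic point of $V(q_1)$, any regular function vanishing at $\xi$ is divisible by $q_1$. The main content is the second condition, which amounts to showing that the fibre dimension $k(x) := \dim\bigl(V^{1,0}_\zeta \cap \wh V^{1,0}_\eta\bigr)$ is identically $k_1$ on $S$. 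Upper semicontinuity yields $k(x)\geq k_1$, so the task reduces to ruling out a strict jump $k(x_0) > k_1$ at some $x_0 \in S$. I expect this to follow from the bi-homogeneous structure $M = [A(\zeta)\mid B(\eta)]$ of the presentation matrix: a Cauchy--Binet-type expansion splits the $(2n-k_1)$-minors of $M$ into sums of products of minors of $A$ (polynomial in $\zeta$) and minors of $B$ (polynomial in $\eta$), and the simultaneous vanishing of all such minors at $x_0$, combined with the factorisation $\det M = q_1^{k_1}$, should force a refinement of the factorisation of $\det M$ incompatible with the irreducibility of $q_1$, i.e.\ with the integrality of $S$. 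This last step, combining the specific bi-grading of $M$ with the irreducibility of $q_1$, is the main technical obstacle I expect to face.
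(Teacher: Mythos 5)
Your proposal is incomplete at exactly the point where the corollary has content. The step you defer --- showing that the fibre dimension $k(x)=\dim\bigl(V^{1,0}_\zeta\cap\wh V^{1,0}_\eta\bigr)$ does not jump above its generic value $k_1$ at special points of $S$ --- is equivalent to the local freeness you are trying to prove, and you leave it as something that ``should follow'' from a Cauchy--Binet expansion. The machinery you do carry out (maximal Cohen--Macaulayness via Auslander--Buchsbaum, the generic rank via Smith normal form over the DVR at the generic point, the reduction to the Fitting-ideal conditions) is correct but cannot close this gap by itself: a maximal Cohen--Macaulay module of generic rank one over an integral curve singularity need not be free, so depth and purity arguments stop strictly short of the conclusion.

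Moreover, the fallback you sketch cannot work in the generality in which you pose it. The implication ``$\det M=q_1^{k_1}$ with $q_1$ irreducible and $l_1=1$ forces constant corank $k_1$ along $V(q_1)$'' is false for a general square matrix of functions on a smooth surface: matrix factorisations of integral singular plane curves provide counterexamples. For instance
$$ N=\begin{pmatrix} \eta & \zeta^2\\ \zeta & \eta\end{pmatrix},\qquad \det N=\eta^2-\zeta^3, $$
has irreducible, reduced determinant and corank $1$ at the generic point of the cuspidal cubic, yet corank $2$ at the origin; its cokernel is the maximal ideal of the cusp, a non-free maximal Cohen--Macaulay module of rank one whose annihilator is exactly $(\eta^2-\zeta^3)$. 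Hence no argument using only the irreducibility of $q_1$ and the factorisation of $\det M$ can succeed: the special shape of $M=[A(\zeta)\,|\,B(\eta)]$ (whose two blocks each have full rank $n$ at every point, since their columns span $V^{1,0}_\zeta$ and $\wh V^{1,0}_\eta$) and/or the vanishing $H^\ast(\sF(-1,-1))=0$ must enter in an essential way, and your sketch does not show how they do. For comparison, the paper's own proof is a one-line deduction that reads the constancy of the rank of the nonzero stalks of $\sF$ directly off $\det M=(p_M)^r$ and then concludes local freeness over the reduced curve $S$; that is, it passes straight through the very statement at which your argument stalls, so your longer route buys nothing until that step is actually supplied.
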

\begin{proof} The assumption implies that $\det M=(p_M)^r$ and, so, the nonzero stalks of $\sF$ have constant rank $r$. Therefore $\sF|_S$ is locally free of rank $r$. \end{proof}

\medskip

We shall now describe how to  obtain a $2$-sphere of complex structures from a sheaf. We begin with a   $1$-dimensional $\sigma$-sheaf $\sF$ on $\oP^1\times\oP^1$ such that $H^\ast(\sF(-1,-1))=0$ and $\ol\Delta\cap \supp\sF=\emptyset$.
\par
Let $\zeta\in \oP^1$. Since $(\zeta,-1/\bar{\zeta})\not\in S$,  we have an exact sequence
\begin{equation} 0\to \sF(-1,-1)\to  \sF(-1,0)\oplus \sF(0,-1)\to \sF\to 0,\label{original}\end{equation}
where the map is given by multiplication by a pair of global non-zero sections of $\sO(1,0)$ and $\sO(0,1)$, vanishing at $\zeta$ and $-1/\bar\zeta$, respectively. Thus, we obtain a decomposition
\begin{equation}H^0(\sF)\simeq H^0(\sF(-1,0))\oplus H^0(\sF(0,-1)) \label{decompose2}\end{equation}
where the summands are interchanged by $\sigma$, i.e. by the real structure of $H^0(\sF)$. This gives a complex structure on $V=H^0(\sF)^\sigma$ for every $\zeta\in \oP^1$. The subspace $V_\zeta^{1,0}$ is identified with the vector space of global sections of $\sF$, vanishing on $D_\zeta=\{\zeta\}\times \oP^1$, i.e. $V^{1,0}_\zeta=H^0(\sF[-D_\zeta])$.  Thus we obtain a holomorphic embedding $K:\oP^1\to \sJ(V)$. 
\par
Observe that the bundle $K^\ast \sV^{1,0}$ is isomorphic to $H^0(\sF(-1,0))\otimes \sO(-1)$. The isomorphism is realised explicitly  via the map:
 $H^0(\sF(-1,0))\otimes \sO(-1)\to H^0(\sF)\otimes \sO$,
defined as \begin{equation}H^0(\sF(-1,0))\otimes \sO(-1)\ni (s,(a,b)) \stackrel{m}{\longmapsto} (b\zeta-a)s\in H^0(\sF)\label{subbundle}\end{equation} (here $(a,b)\in l$, where $l$ is the fibre of $\sO(-1)$ over $[l]$). 
Consequently, all $p_j$ in \eqref{E-split} are equal to $1$.
\par
Thus, we have two maps 
\begin{center} $\Phi$: {\em $\oP^1$ of complex structures $\longmapsto$ sheaf},\\$\Psi$: {\em sheaf $\longmapsto$ $\oP^1$ of complex structures}.\end{center}
We want to compare their compositions. First of all, we have:

\begin{proposition} Let $\sF$ be a $1$-dimensional $\sigma$-sheaf on $\oP^1\times \oP^1$,  such that  $\supp\sF$ does not meet $\ol\Delta$ and $H^\ast(\sF(-1,-1))=0$. Then $\Phi\circ \Psi(\sF)\simeq \sF$ (as $\sigma$-sheaves). 
\end{proposition}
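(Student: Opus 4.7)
The plan is to construct a natural morphism $\sF' := \Phi\circ\Psi(\sF)\to\sF$ via evaluation of global sections and then show it is an isomorphism, exploiting the vanishing hypothesis $H^{\ast}(\sF(-1,-1))=0$.

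Set $V^\cx = H^0(\sF)$. By construction $\sF'$ is the cokernel of an injection $\sW \hookrightarrow \sO\otimes V^\cx$, where $\sW$ is the bundle of \eqref{E}. The evaluation morphism $\mathrm{ev}\colon \sO\otimes V^\cx \to \sF$ is natural and $\sigma$-equivariant; I claim the composite $\sW \to \sO\otimes V^\cx \xrightarrow{\mathrm{ev}} \sF$ vanishes, so $\mathrm{ev}$ descends to a $\sigma$-equivariant morphism $\sF'\to\sF$. This is a local check using \eqref{subbundle}: for a local section $s\otimes\tau$ of the summand $H^0(\sF(-1,0))\otimes\sO(-1,0)$, with $\tau$ tautologically of the form $(a,b)\in l_{[a:b]}$, the image in $V^\cx$ is $(b\zeta - a)s$. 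Under $\mathrm{ev}$ at the base point $(\zeta_0,\eta_0)$, this becomes the local section of $\sF$ obtained by substituting $\zeta = \zeta_0$ into $(b\zeta - a)s$; the tautological relation $\zeta_0 = a(\zeta_0)/b(\zeta_0)$ makes this zero at every point, and a direct computation in the local ring (using the identity $a = \zeta b$ that defines $\sO(-1,0)$) shows the composite vanishes in the stalk as well. The symmetric argument handles $H^0(\sF(0,-1))\otimes\sO(0,-1)$.

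The second step is to show $\sF'\to\sF$ is an isomorphism. Applying the long exact cohomology sequence to $0\to\sW\to\sO\otimes V^\cx\to\sF'\to 0$, together with the Künneth vanishings $H^\ast(\sO(-1,0)) = H^\ast(\sO(0,-1)) = 0$, yields $H^0(\sF')\simeq V^\cx$ with the induced map $H^0(\sF')\to H^0(\sF)$ equal to the identity; twisting by $\sO(-1,-1)$ and using $H^\ast(\sO(-2,-1)) = H^\ast(\sO(-1,-2)) = H^\ast(\sO(-1,-1)) = 0$ gives $H^\ast(\sF'(-1,-1)) = 0$ as well, so $\sF'$ satisfies the same cohomological hypothesis as $\sF$. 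The core identification is Beilinson-theoretic: the Koszul resolution of the diagonal in $\oP^1\times\oP^1$ produces a Beilinson double complex quasi-isomorphic to $\sF$, whose lowest-weight column $H^\ast(\sF(-1,-1))\otimes\sO(-1,-1)$ vanishes by hypothesis, and the surviving truncated piece is precisely the two-term complex $\sW\to\sO\otimes V^\cx$ defining $\sF'$. The $\sigma$-equivariance of the Koszul resolution with respect to the involution \eqref{sigma} ensures the resulting isomorphism $\sF\simeq\sF'$ respects the real structures.

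The main obstacle is the degeneration of the Beilinson spectral sequence, since we have no a priori control of $H^1(\sF(-1,0))$, $H^1(\sF(0,-1))$ or $H^1(\sF)$. I plan to bypass this by analyzing the kernel $\sK$ and cokernel $\sC$ of the already-constructed map $\sF'\to\sF$ directly: both are supported on $S$, both inherit the vanishing of $H^\ast(\,\cdot\,(-1,-1))$ from the long exact sequences associated with $0\to\sK\to\sF'\to\sF\to\sC\to 0$, and combined with $\sF'\to\sF$ being the identity on $H^0$, one forces $\sK = \sC = 0$.
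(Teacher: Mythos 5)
Your overall strategy --- construct a comparison morphism $\sF'=\Phi\circ\Psi(\sF)\to\sF$ by descending the evaluation map, then kill its kernel and cokernel cohomologically --- is genuinely different from the paper's proof, which instead builds a minimal locally free resolution of $\sF$ (using purity, projective dimension one, and global generation via Maclagan--Smith), pins down its first term as $\sO(-1,0)^{\oplus n}\oplus\sO(0,-1)^{\oplus n}$ by cohomological constraints, and matches it term by term against the resolution defining $\Phi\circ\Psi(\sF)$. The first two thirds of your argument are sound: the composite $\sW\to\sO\otimes V^\cx\to\sF$ does vanish on stalks (it factors through the zero composite $\sO(-1,0)\to H^0(\sO(1,0))\otimes\sO\to\sO(1,0)$ tensored with the multiplication $\sF(-1,0)\otimes\sO(1,0)\to\sF$); the K\"unneth computation gives $H^0(\sF')=V^\cx$, $H^1(\sF')=0$ and $H^\ast(\sF'(-1,-1))=0$; and the four-term sequence $0\to\sK\to\sF'\to\sF\to\sC\to0$ does propagate the $(-1,-1)$-vanishing to $\sK$ and $\sC$. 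The Beilinson paragraph can simply be deleted; as you note yourself, it is not what carries the proof.

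The gap is in the last sentence. The vanishing $H^\ast(\sG(-1,-1))=0$ does \emph{not} by itself force a one-dimensional sheaf $\sG$ to vanish: for instance $\sG=\sO_S(1,0)$ on a smooth $(1,1)$-curve $S\simeq\oP^1$ is $\sO_{\oP^1}(1)$, and $\sG(-1,-1)\simeq\sO_{\oP^1}(-1)$ has no cohomology. So you still owe an argument, and ``identity on $H^0$'' is not enough as stated. What actually closes it is: (a) $H^0(\sK)=0$, which is immediate, and $H^0(\sC)=0$, which is not formal --- writing $\sI$ for the image of $\sF'\to\sF$, you need $H^1(\sI)=0$, which follows from $H^1(\sF')=0$ (unstated in your writeup, though it drops out of the same K\"unneth computation) together with $H^2(\sK)=0$; and (b) the observation that a sheaf $\sG$ of dimension $\le1$ on $\oP^1\times\oP^1$ with $H^0(\sG)=0$ and $\chi(\sG(-1,-1))=0$ must vanish, because $\chi(\sG(-1,-1))=\chi(\sG)-c_1(\sG)\cdot(h_1+h_2)$ with $c_1(\sG)$ effective ($h_1,h_2$ the ruling classes) and $\chi(\sG)=-h^1(\sG)\le0$, forcing $c_1(\sG)=0$ and $\chi(\sG)=0$, i.e.\ $\sG$ zero-dimensional with no sections, hence zero. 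Alternatively you could import global generation of $\sF$ (the Maclagan--Smith input the paper uses) to get $\sC=0$ directly, since $\sI$ is exactly the image of $\ev:\sO\otimes H^0(\sF)\to\sF$. With either repair your route goes through, and it has the merit of never needing the first term of the minimal resolution of $\sF$ to split as a direct sum of line bundles.
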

\begin{proof}    A $1$-dimensional sheaf $\sF$, such that $H^\ast(\sF(-1,-1))=0$, is generated by global sections (see, e.g. \cite[Theorem 6.9]{McL-S}). The assumption that $H^0(\sF(-1,-1))=0$ implies also that $\sF(-1,-1)$ (and therefore $\sF$ as well) does not have zero-dimensional subsheaves, i.e. $\sF$ is pure. Hence $\sF$ has  has projective dimension $1$ \cite[\S 1.1]{HL}, and so the surjection $H^0(\sF)\otimes\sO\to \sF$ extends to a resolution by locally free sheaves:
\begin{equation} 0\to \sW\to H^0(\sF)\otimes\sO\to \sF\to 0,\label{F15}\end{equation}
where $\sW$ does not have trivial summands. As $\sF$ is a $\sigma$-sheaf, we can assume that $\sW$ is a $\sigma$-sheaf and the maps are compatible with $\sigma$. Let us write $\sW\simeq \oplus \sO(-p_i,-q_i)$.  Then $p_i,q_i\geq 0$, $p_i+q_i\geq 1$, but, since  $H^\ast(\sF(-1,-1))=0$, either $p_i=0$ or $q_i=0$ for every $i$. We also know, from \eqref{decompose2}, that $h^0(\sF(-1,0))=n$, where $2n=h^0(\sF)$. It follows that $h^1(\sW(-1,0))=h^1(\sW(0,-1))=n$ and, therefore, all nonzero $p_i$ and all nonzero $q_i$ are equal to $1$.  The resolution \eqref{F15} takes the form
\begin{equation} 0\to \sO(-1,0)^{\oplus n}\oplus \sO(0,-1)^{\oplus n}\longrightarrow H^0(\sF)\otimes\sO\longrightarrow \sF\to 0.\label{F16}\end{equation}
The embedding of $\sO(-1,0)^{\oplus n}$ into $H^0(\sF)\otimes\sO$ is given by a matrix of linear polynomials in $\zeta$, and  we  conclude from \eqref{F16} that the fibre of $\sO(-1,0)^{\oplus n}$ at $\zeta$ maps to global sections of $\sF$ vanishing on $D_\zeta=\{\zeta\}\times \oP^1$. Similarly the fibre of $\sO(0,-1)^{\oplus n}$ at $\eta$ maps to global sections of $\sF$ vanishing on $\hat{D}_\eta=\oP^1\times \{\eta\}$.
\par
Now recall the construction of $\Psi$. The sheaf $\sG=\Phi\circ \Psi(\sF)$ fits into the following exact sequence:
\begin{equation} 0\to \sT\stackrel{M}{\longrightarrow} H^0(\sF)\otimes \sO\longrightarrow\sG\to 0, \label{FF}
\end{equation} 
where $\sT=H^0(\sF(-1,0))\otimes \sO(-1,0)\oplus H^0(\sF(0,-1))\otimes \sO(0,-1)$ and the map $M$ is given by $(X,Y)\mapsto m(X)+m(Y)$ with $m$ defined in \eqref{subbundle}. Therefore we have isomorphisms between the first two terms in \eqref{F15} and \eqref{FF}, which make the following diagram commutative:
\begin{equation*}\begin{CD} 0 @>>>\sW @>>> H^0(\sF)\otimes\sO @>>> \sF @>>> 0
\\
@. @VVV  @VVV @. @.\\
0 @>>> \sT @>>> H^0(\sF)\otimes \sO @>>>\sG @>>> 0.
 \end{CD}
\end{equation*}
Hence $\sG\simeq \sF$.
\end{proof}

\medskip

On the other hand, the composition $\Psi\circ \Phi$ does not need to be the identity (even up to isomorphisms). Let us introduce the notion of a morphism of $2$-spheres of complex structures:
\begin{definition} A morphism between $K_1:\oP^1\to \sJ(V_1)$ and $K_2:\oP^1\to \sJ(V_2)$ is a linear map $\alpha:V_1\to V_2$ such that $\alpha\bigl(K_1(\zeta)v\bigr)=K_2(\zeta)(\alpha(v))$ for all $\zeta\in \oP^1$ and $v\in V_1$.\end{definition}
\begin{proposition} Let $K:\oP^1\to \sJ(V)$ be a $\oP^1$ of complex structures. The following conditions are equivalent:
\begin{itemize}
\item[(i)]  $K^\ast \sV^{1,0}\simeq O(-1)^{\oplus n}$.
\item[(ii)] $\Psi\circ \Phi (V,K)$ is isomorphic to $(V,K)$. 
\end{itemize}
\end{proposition}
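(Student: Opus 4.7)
The implication (ii)$\Rightarrow$(i) will follow directly from the construction of $\Psi$: the discussion following \eqref{subbundle} gives $K'^\ast\sV^{1,0}\simeq \sO(-1)^{\oplus n}$ for $K'=\Psi(\sF)$, so if $(V,K)$ is isomorphic, as a $2$-sphere of complex structures, to $\Psi\circ\Phi(V,K)$, the induced bundle isomorphism over $\oP^1$ forces $K^\ast\sV^{1,0}\simeq \sO(-1)^{\oplus n}$.

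For (i)$\Rightarrow$(ii), I would set $\sF=\Phi(V,K)$ and first note that the identity $\wh K(\zeta)=-K(\sigma(\zeta))$ lets (i) propagate to $\wh K^\ast\sV^{1,0}\simeq \sO(-1)^{\oplus n}$, so that every $p_j$ in \eqref{E-split} equals $1$. The defining sequence \eqref{F1} then reads
\[
0\to \sO(-1,0)^{\oplus n}\oplus\sO(0,-1)^{\oplus n}\longrightarrow \sO\otimes V^\cx \longrightarrow \sF\to 0,
\]
and since $H^i$ vanishes for $i=0,1$ on both $\sO(-1,0)$ and $\sO(0,-1)$, the associated long exact sequence yields a canonical $\sigma$-equivariant isomorphism $V^\cx\simeq H^0(\sF)$. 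This identifies $V=(V^\cx)^\sigma$ with the real vector space underlying $\Psi(\sF)$.

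It then remains to verify that this identification intertwines $K$ with $K'=\Psi(\sF)$ fibrewise, i.e.\ that $V^{1,0}_\zeta$ (in the sense of $K$) coincides with $H^0(\sF[-D_\zeta])\subset V^\cx$ for every $\zeta$. By continuity of $\zeta\mapsto V^{1,0}_\zeta$, it suffices to treat generic $\zeta$, for which $D_\zeta\not\subset \supp\sF$. Tensoring the defining sequence with $\sO_{D_\zeta}$, right-exactness gives that $v\in V^\cx$ maps to a section of $\sF$ vanishing on $D_\zeta$ if and only if $v|_{D_\zeta}$ lifts through $W|_{D_\zeta}\to V^\cx\otimes\sO_{D_\zeta}$. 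Now
\[
W|_{D_\zeta}\simeq \sO_{\oP^1}^{\oplus n}\oplus\sO_{\oP^1}(-1)^{\oplus n},
\]
where the trivial summand maps via the constant inclusion $V^{1,0}_\zeta\hookrightarrow V^\cx$ and the $\sO(-1)^{\oplus n}$-summand maps via the $\eta$-varying inclusion $\wh V^{1,0}_\eta\hookrightarrow V^\cx$. Since $H^0(\sO_{\oP^1}(-1))=0$, a constant section $v$ in this image must come entirely from the trivial summand, hence lies in $V^{1,0}_\zeta$; the reverse inclusion is immediate.

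The main obstacle is precisely this last identification of $(1,0)$-subspaces. Restriction to $D_\zeta$ is only right-exact, but that is already enough for the lifting criterion; the decisive point is that $H^0(\sO(-1))=0$ on $\oP^1$ forces any such lift of a constant section to be itself constant, thereby pinning it to the original $V^{1,0}_\zeta$.
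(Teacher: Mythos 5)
Your proof is correct and follows essentially the same route as the paper: (ii)$\Rightarrow$(i) from the construction of $\Psi$, and (i)$\Rightarrow$(ii) by noting all $p_j=1$, identifying $V^\cx$ with $H^0(\sF)$ from \eqref{F1}, and matching $V^{1,0}_\zeta$ with $H^0(\sF[-D_\zeta])$. The only difference is that you supply the details of this last identification (restriction to $D_\zeta$ and the vanishing of $H^0(\sO(-1))$ on the $\eta$-varying summand of $W|_{D_\zeta}$), which the paper merely asserts as a consequence of \eqref{F1}.
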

\begin{proof} If $\Psi\circ \Phi (V,K)$ is isomorphic to $(V,K)$, then clearly (i) holds, due to the above description of the map $\Psi$. Conversely, if (i) holds, then all $p_j$ in \eqref{E-split} are equal to $1$, and the sequence \eqref{F1} implies that we can identify $V^{1,0}_\zeta$ with sections of $\sF$ vanishing on $\{\zeta\}\times \oP^1$. Therefore, by the above discussion, $\Psi$ recovers $(V,K)$ up to an isomorphism.
\end{proof}

\medskip

{\em Remark.} Although we are interested primarily in $2$-spheres of complex structures, all results of this section remain true for holomorphic maps from $\oP^1$ to the full complex Grassmannian  $\Gr_n(V^\cx)$. We only need to remove the assumption or conclusion that $S\cap \ol\Delta=\emptyset$. In particular, we have (here $\sV$ is the tautological bundle on the Grassmannian):
\begin{theorem} There exists a natural 1-1 correspondence between: \begin{itemize}
\item[(a)] holomorphic maps $K:\oP^1\to \Gr_n(\cx^{2n})$ such that $K^\ast\sV=\sO(-1)^{\oplus n}$, modulo the action of $GL_{2n}(\oR)$ on the Grassmannian,\\
{\bf and}
\item[(b)] isomorphism classes of $1$-dimensional $\sigma$-sheaves $\sF$ on $\oP^1\times \oP^1$ such that $\chi(\sF)=2n$ and  $H^\ast(\sF(-1,-1))=0$.
\end{itemize}
\end{theorem}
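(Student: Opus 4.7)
The plan is to recognise the theorem as a repackaging of the two preceding propositions, extended per the \emph{Remark} just above to the setting of holomorphic maps into the full Grassmannian. Fixing once and for all a model $V=\oR^{2n}$, the $GL_{2n}(\oR)$-action on $\Gr_n(\cx^{2n})$ realises the groupoid of real-linear isomorphisms, so the quotient on side (a) is canonically the same as isomorphism classes of pairs $(V,K)$ with $V$ a $2n$-dimensional real vector space and $K:\oP^1\to\Gr_n(V^\cx)$ satisfying $K^\ast\sV\simeq \sO(-1)^{\oplus n}$. The maps $\Phi$ (pair-to-sheaf) and $\Psi$ (sheaf-to-pair) of Section~\ref{spheres} provide the two arrows, and I would verify: (i) $\Phi$ lands in (b); (ii) $\Psi$ lands in (a); (iii) the compositions are identities up to isomorphism. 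Point (iii) is exactly the content of the two propositions already proved (in the Grassmannian version promised by the Remark), so the real work is (i) and (ii).

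For (i), I would start from the resolution \eqref{F1} defining $\sF=\Phi(V,K)$. The hypothesis $K^\ast\sV\simeq\sO(-1)^{\oplus n}$ forces $p_j=1$ for every $j$. The $\sigma$-sheaf property is Proposition~\ref{S}(ii). The vanishing $H^\ast(\sF(-1,-1))=0$ drops out by tensoring \eqref{F1} with $\sO(-1,-1)$ and applying K\"unneth to note $H^\ast(\sO(-1,-1))=H^\ast(\sO(-2,-1))=H^\ast(\sO(-1,-2))=0$. The Euler characteristic follows from additivity in \eqref{F1}:
\[
\chi(\sF) \;=\; 2n\,\chi(\sO) - n\,\chi(\sO(-1,0)) - n\,\chi(\sO(0,-1)) \;=\; 2n,
\]
since $\chi(\sO(-1,0))=\chi(\sO_{\oP^1}(-1))\chi(\sO_{\oP^1})=0$.

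For (ii), starting from $\sF$ as in (b), the derivation already carried out in the proof that $\Phi\circ\Psi(\sF)\simeq\sF$ produces a locally free resolution of the shape \eqref{F16}. Taking its long exact sequence of cohomology and using $H^1(\sO(-1,0))=H^1(\sO(0,-1))=H^1(\sO)=0$ forces $h^1(\sF)=0$, whence $h^0(\sF)=\chi(\sF)=2n$. Consequently $V:=H^0(\sF)^\sigma$ is a $2n$-dimensional real vector space, and the subspaces $V^{1,0}_\zeta=H^0(\sF[-D_\zeta])$ define a holomorphic $K:\oP^1\to\Gr_n(V^\cx)$ for which $K^\ast\sV\simeq\sO(-1)^{\oplus n}$ by the explicit formula \eqref{subbundle}, placing $\Psi(\sF)$ in (a).

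The one point needing care is that in the Grassmannian setting we allow $S\cap\overline{\Delta}\neq\emptyset$, so the Koszul sequence \eqref{original} is exact only for $\zeta$ with $(\zeta,-1/\bar\zeta)\notin S$; correspondingly $K(\zeta)$ may fail to define a complex structure on $V$ precisely on the (closed) locus $S\cap\overline{\Delta}$. I expect this to cause no real trouble since that set of $\zeta$ is dense open in $\oP^1$, and all the homological identities used --- the decomposition \eqref{decompose2}, the shape of the resolution \eqref{F16}, and the numerical invariants above --- either hold globally on $\sF$ or can be checked on this dense open set and extended by holomorphicity. This is precisely why one must replace $\sJ(V)$ by the full Grassmannian in the theorem.
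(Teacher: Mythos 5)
Your proposal is correct and follows exactly the route the paper intends: the theorem is stated without a separate proof as the Grassmannian version (per the preceding Remark) of the two propositions on $\Phi\circ\Psi$ and $\Psi\circ\Phi$, and your argument is precisely that assembly, with the routine verifications (the Euler characteristic and cohomology vanishing from the resolution \eqref{F1}, and the shape of \eqref{F16} giving $h^0(\sF)=2n$ and $K^\ast\sV\simeq\sO(-1)^{\oplus n}$ via \eqref{subbundle}) correctly filled in. Your closing observation about $S\cap\ol\Delta$ — that one only loses the interpretation of $K(\zeta)$ as a complex structure, not any of the homological identities — is exactly the content of the paper's Remark.
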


\section{$O(-1)$-structures and pluricomplex structures on vector spaces\label{pluri}}

The results of the previous section imply that we should expect particularly nice results, if we restrict the class of $2$-spheres of complex structures.

\begin{definition} Let $V$ be a $2n$-dimensional real vector space. An {\em $O(-1)$-structure} on $V$ is a holomorphic map $K:\oP^1\to \sJ(V)$ such that $K^\ast \sV^{1,0}\simeq O(-1)\otimes \cx^n$.\label{definition0}\end{definition}

\begin{remark}
 It follows immediately that $K$ is an embedding.
\end{remark}

From the previous section, we  deduce:
\begin{proposition} There is a 1-1 correspondence between isomorphism classes of $O(-1)$-structures and isomorphism classes of $1$-dimensional $\sigma$-sheaves $\sF$ on $\oP^1\times \oP^1$, such that  $ \ol\Delta\cap \supp\sF=\emptyset$ and $H^\ast(\sF(-1,-1))=0$.\hfill $\Box$\label{FtoK}\end{proposition}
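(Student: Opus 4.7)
The proposition is essentially a packaging of the two previous propositions together with Proposition \ref{S}, and the plan is to verify that the maps $\Phi$ and $\Psi$ constructed in Section 2 restrict to mutually inverse bijections on the specified isomorphism classes.

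First I would organize the data. Given an $O(-1)$-structure $K:\oP^1\to\sJ(V)$, Proposition \ref{S} says that $\Phi(V,K)$ is automatically a $1$-dimensional $\sigma$-sheaf on $\oP^1\times\oP^1$ whose support avoids $\ol\Delta$ (because $J_\zeta$ and $\hat J_{-1/\bar\zeta}=-J_\zeta$ force $V^{1,0}_\zeta\cap \wh V^{1,0}_{-1/\bar\zeta}=0$) and which satisfies the vanishing $H^\ast(\sF(-1,-1))=0$: indeed, the $O(-1)$-hypothesis forces all exponents $p_j$ in \eqref{E-split} to equal $1$, so the resolution \eqref{F1} tensored with $\sO(-1,-1)$ is $0\to\sO(-2,-1)^{\oplus n}\oplus\sO(-1,-2)^{\oplus n}\to\sO(-1,-1)^{\oplus 2n}\to\sF(-1,-1)\to 0$ and the long exact sequence gives both $H^0$ and $H^1$ vanishing. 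Thus $\Phi$ sends $O(-1)$-structures to sheaves of the required type.

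Next I would check that $\Psi$ sends a $1$-dimensional $\sigma$-sheaf $\sF$ with the stated properties to an $O(-1)$-structure. The construction of $\Psi$ described in Section 2 yields a holomorphic map $K:\oP^1\to\sJ(V)$ with $V=H^0(\sF)^\sigma$, and the discussion surrounding \eqref{subbundle} produces an explicit isomorphism $K^\ast\sV^{1,0}\simeq H^0(\sF(-1,0))\otimes\sO(-1)$; combined with the dimension count $h^0(\sF(-1,0))=n$ deduced from \eqref{decompose2}, this is precisely the $O(-1)$-condition of Definition \ref{definition0}. So $\Psi$ lands in the target class as well.

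With both maps now known to preserve the respective classes, the bijection is an immediate consequence: the earlier proposition showed $\Phi\circ\Psi(\sF)\simeq\sF$ as $\sigma$-sheaves for any $\sF$ meeting the hypotheses, while the final proposition of Section 2 showed $\Psi\circ\Phi(V,K)\simeq(V,K)$ precisely under condition (i) there, namely $K^\ast\sV^{1,0}\simeq\sO(-1)^{\oplus n}$, which is exactly the defining condition of an $O(-1)$-structure.

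The only non-routine point, and the one I would write out carefully, is that the correspondence is well-defined and bijective at the level of \emph{isomorphism classes}. For this I would note that a morphism of $O(-1)$-structures $\alpha:(V_1,K_1)\to(V_2,K_2)$ in the sense of the definition in Section 2 induces a $\cx$-linear map $\sV^{1,0}_1\to\sV^{1,0}_2$ at every $\zeta$, and hence a morphism of the sequences \eqref{F0}, compatible with the $\sigma$-structures; this yields a functorial morphism $\Phi(\alpha)$ of $\sigma$-sheaves, and isomorphisms go to isomorphisms. Conversely, a $\sigma$-equivariant isomorphism $\sF_1\simeq\sF_2$ induces via $H^0(\,\cdot\,)^\sigma$ a real linear isomorphism $V_1\to V_2$ intertwining $K_1$ and $K_2$, because the subspaces $V^{1,0}_\zeta=H^0(\sF[-D_\zeta])$ are determined by $\sF$. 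The compatibility of these two functorial assignments with the isomorphisms $\Phi\circ\Psi\simeq\mathrm{id}$ and $\Psi\circ\Phi\simeq\mathrm{id}$ already established completes the argument. The main subtlety to track, rather than any deep obstacle, is exactly this bookkeeping of the $\sigma$-structure throughout, and the verification that the $n$ appearing in $\sO(-1)^{\oplus n}$ matches $\tfrac12\dim_\R V=\tfrac12 h^0(\sF)$.
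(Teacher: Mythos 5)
Your proposal is correct and follows exactly the route the paper intends: the proposition is stated with no written proof (just ``$\Box$'') precisely because it is the assembly of Proposition \ref{S}, the explicit identification $K^\ast\sV^{1,0}\simeq H^0(\sF(-1,0))\otimes\sO(-1)$ from \eqref{subbundle}, and the two propositions showing $\Phi\circ\Psi\simeq\mathrm{id}$ and $\Psi\circ\Phi\simeq\mathrm{id}$ under the $O(-1)$-condition. Your additional care with functoriality and the $\sigma$-structure at the level of isomorphism classes is exactly the bookkeeping the paper leaves implicit.
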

We stress that the isomorphism classes are ``$\sigma$-isomorphism" classes, i.e. we allow only the action of $GL(V)$ on $O(-1)$-structures and we consider sheaves to be isomorphic only if they are isomorphic as $\sigma$-sheaves.

We now consider the sequence \eqref{F0} for a given $O(-1)$-structure. The vector bundle $W$ is isomorphic to $O(-1,0)\otimes \cx^n\oplus O(0,-1)\otimes \cx^n$ and we can rewrite \eqref{F0} as 
\begin{equation}0\to \sO(-1,0)^{\oplus n}\oplus \sO(0,-1)^{\oplus n}\stackrel{M}{\longrightarrow}\sO^{\oplus 2n}\to \sF\to 0. \label{F}\end{equation}
We adopt the following definition.
\begin{definition} The sheaf $\sF$ is called the {\em characteristic sheaf} and its support $S$ the {\em characteristic curve} of an $O(-1)$-structure.
\end{definition}

We recall the description of $S$ given in Proposition \ref{S}.
We observe that the  $\sigma$-invariance implies that $S$ (i.e. the minimal polynomial $p_M$) has bidegree $(k,k)$ for some $k\in \oN$.
\begin{definition} The positive integer $k$ is called the {\em degree} of an $O(-1)$-structure.\end{definition}
\begin{remark} Corollary \ref{VB0} implies that, if the characteristic curve is integral, then its degree $k$ divides $n$ and 
the sheaf $\sF$ is locally free of rank $n/k$. \label{VB1}\end{remark}


We now impose a further condition on $K$.

\begin{definition} Let $V$ be a $2n$-dimensional real vector space. A {\em pluricomplex structure} on $V$ is an $O(-1)$-structure $K:\oP^1\to \sJ(V)$ such that  $V^\cx/K^\ast \sV^{1,0}\simeq \sO(1)\otimes \cx^n$.\label{definition}\end{definition}

In other words, we have a short exact sequence
$$ 0\rightarrow \sO(-1)\otimes \cx^n \to \sO\otimes V^\cx \to \sO(1)\otimes \cx^n\to 0$$
of vector bundles on $\oP^1$. 
Taking the corresponding cohomology sequence, we conclude that a pluricomplex structure is the same as an isomorphism $e:  V^\cx\to \cx^n\otimes \cx^2$, such that for any vector $x\in \cx^2$, $e^{-1}(\cx^n\otimes x)$ is transversal to its complex conjugate.

\begin{remark}
 A generic $O(-1)$-structure is a pluricomplex structure.
\end{remark}

We can characterise pluricomplex structures as follows:
\begin{lemma}
 Let $K:\oP^1\to \sJ(V)$ be an $O(-1)$-structure. The following conditions are equivalent:
\begin{itemize}
 \item[(i)] $K$ is a pluricomplex structure.
\item[(ii)] $V^{1,0}_\zeta\cap V^{1,0}_{\zeta^\prime}=0$ if $\zeta\neq \zeta^\prime$.
\item[(iii)]  $J_\zeta(v)\neq J_{\zeta^\prime}(v)$ for any $\zeta\neq \zeta^\prime$ and $v\in V$.
\item[(iv)] The characteristic sheaf $\sF$ of $K$ satisfies: $H^\ast (\sF(-2,0))=H^\ast (\sF(0,-2))=0$. 
\end{itemize}\label{-2,0}
\end{lemma}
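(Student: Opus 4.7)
My plan is to prove the four equivalences via the chain (iii)$\Leftrightarrow$(ii)$\Leftrightarrow$(i)$\Leftrightarrow$(iv). The equivalence (ii)$\Leftrightarrow$(iii) is immediate from real/imaginary parts: a nonzero element of $V^{1,0}_\zeta\cap V^{1,0}_{\zeta^\prime}$ has the form $v-iJ_\zeta v=v^\prime-iJ_{\zeta^\prime}v^\prime$ for some $v,v^\prime\in V$, and equating real and imaginary parts forces $v=v^\prime$ and $J_\zeta v=J_{\zeta^\prime}v$; conversely, a nonzero real $v$ with $J_\zeta v=J_{\zeta^\prime}v$ yields $v-iJ_\zeta v$ in the intersection.

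For (i)$\Leftrightarrow$(ii), I would analyse the quotient bundle $Q:=(\sO\otimes V^\cx)/K^\ast\sV^{1,0}$ on $\oP^1$. From the short exact sequence $0\to\sO(-1)^{\oplus n}\to\sO\otimes V^\cx\to Q\to 0$ and $H^\ast(\sO(-1))=0$, the long exact sequence gives $H^0(Q)\simeq V^\cx$ and $H^1(Q)=0$, so by Grothendieck splitting $Q\simeq\bigoplus_{i=1}^n\sO(a_i)$ with $a_i\geq -1$ and $\sum a_i=n$; condition (i) corresponds precisely to $a_i=1$ for every $i$. Under $H^0(Q)\simeq V^\cx$, a nonzero section of $Q$ attached to $v\in V^\cx$ vanishes at $\zeta$ iff $v\in V^{1,0}_\zeta$, so (ii) translates into the requirement that no nonzero section of $Q$ have two distinct zeros. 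This forces $a_i\leq 1$ for every $i$, which combined with $a_i\geq -1$ and $\sum a_i=n$ leaves only $a_i=1$.

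Finally, for (i)$\Leftrightarrow$(iv), I would push the resolution \eqref{F} forward along $\pi_1:\oP^1\times\oP^1\to\oP^1$. Using $\pi_{1\ast}\sO(-1,0)=\sO(-1)$, $\pi_{1\ast}\sO(0,-1)=0$, and the vanishing of $R^1\pi_{1\ast}$ on each term, the long exact sequence collapses to $\pi_{1\ast}\sF\simeq Q$ and $R^1\pi_{1\ast}\sF=0$. The projection formula together with Leray then identifies $H^i(\sF(-2,0))\simeq H^i(\oP^1,Q(-2))$ for all $i$, and both groups vanish precisely when $Q\simeq\sO(1)^{\oplus n}$. The vanishing of $H^\ast(\sF(0,-2))$ follows by the symmetric pushforward along $\pi_2$, or equivalently by the $\sigma$-invariance of $\sF$. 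The only delicate point in the whole argument is pinning down the splitting type of $Q$ in the second step; once those numerical invariants are fixed, the pushforward identification makes (iv) essentially automatic.
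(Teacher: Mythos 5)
Your proof is correct. The equivalences (i)$\Leftrightarrow$(ii)$\Leftrightarrow$(iii), argued through the splitting type of $Q=(\sO\otimes V^\cx)/K^\ast\sV^{1,0}$ and the observation that a nonzero section of $Q$ attached to $v$ vanishes at $\zeta$ iff $v\in V^{1,0}_\zeta$, are exactly the paper's argument; you merely spell out the numerical bookkeeping ($a_i\geq -1$, $\sum a_i=n$, $a_i\leq 1$) that the paper leaves implicit. Where you genuinely diverge is (i)$\Leftrightarrow$(iv). The paper ties (iv) to condition (ii) rather than to (i): it identifies $V^{1,0}_\zeta$ with the global sections of $\sF$ vanishing on $\{\zeta\}\times\oP^1$, so that (ii) becomes $h^0(\sF(-2,0))=0$, and then deduces $h^1(\sF(-2,0))=0$ from $h^0(\sF(-2,0))=0$ by running the long exact cohomology sequence of the defining resolution of $\sF$ twisted by $\sO(-2,0)$ --- in effect an Euler-characteristic argument, since $\chi(\sF(-2,0))=0$. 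Your route instead pushes that resolution down along $\pi_1$ to get the structural statement $\pi_{1\ast}\sF\simeq Q$ and $R^1\pi_{1\ast}\sF=0$, whence $H^i(\sF(-2,0))\simeq H^i\bigl(\oP^1,Q(-2)\bigr)$ and (iv) is read off directly from the splitting type of $Q$, i.e.\ from (i) itself, with $H^0$ and $H^1$ handled in one stroke. This costs a little more machinery (projection formula, degeneration of Leray) but buys transparency: all four conditions are localised in the single invariant $(a_1,\dots,a_n)$, and no separate Euler-characteristic step is needed. Both treatments dispose of the $(0,-2)$ twist identically, via the $\sigma$-sheaf structure. The one step in your version that deserves an explicit sentence is the identification of $\pi_{1\ast}$ applied to the inclusion $\sO(-1,0)^{\oplus n}\oplus\sO(0,-1)^{\oplus n}\hookrightarrow\sO\otimes V^\cx$ with the inclusion $K^\ast\sV^{1,0}\hookrightarrow\sO\otimes V^\cx$ (the second summand pushing forward to zero in both degrees): this is what guarantees that the quotient $\pi_{1\ast}\sF$ really is $Q$ and not some other rank-$n$ quotient of $\sO\otimes V^\cx$. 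You assert it and it is true, but it carries the weight of the argument.
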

\begin{proof}
(i) is equivalent to (ii), since the existence of an $\sO(r)$-component, $r>1$, in $Q=V^\cx/K^\ast \sV^{1,0}$ is equivalent to the existence of a section of $Q$ vanishing at two points $\zeta,\zeta^\prime$, which, in turn, is equivalent to $V^{1,0}_\zeta\cap V^{1,0}_{\zeta^\prime}\neq 0$. The equivalence of (ii) and (iii) is clear. 
\par
To prove the equivalence of (i) (or (ii)) and (iv) recall from the previous section that  $K$ arises from the sheaf $\sF$.    The space $V^{1,0}_\zeta$ is identified with sections of $\sF$ vanishing on $\{\zeta\}\times\oP^1$ and, consequently (ii) is equivalent to $h^0(\sF(-2,0)=0$. Tensoring   the sequence \eqref{F0} with $\sO(-2,0)$, and taking the long exact cohomology sequence, shows (as all $p_j=-1$) that the vanishing of $H^0 (\sF(-2,0))$ is equivalent to the vanishing of $H^1 (\sF(-2,0))$.
Finally, using the $\sigma$-invariance, we conclude that $h^1(\sF(0,-2))=h^0(\sF(0,-2))=0$. 
\end{proof}

We have, therefore, the following classification result. 
\begin{proposition} The isomorphism classes of pluricomplex structures on a real vector space $V$ are in 1-1 correspondence with isomorphism classes of  $\sigma$-sheaves $\sF$ on $\oP^1\times \oP^1-\ol \Delta$, supported on a  compact $1$-dimensional complex subspace, and such that 
\begin{itemize}
\item[(i)] $\chi (\sF)=\dim V$;
\item[(ii)] the cohomology of $\sF(-1,-1), \sF(-2,0), \sF(0,-2)$ vanishes. 
\end{itemize}\label{class}
\end{proposition}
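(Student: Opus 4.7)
The plan is to deduce the statement by packaging Proposition~\ref{FtoK} together with Lemma~\ref{-2,0}, plus a short Euler-characteristic bookkeeping that matches real dimensions on the two sides.

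For the forward direction, a pluricomplex structure on $V$ yields, via Proposition~\ref{FtoK}, a $1$-dimensional $\sigma$-sheaf $\sF$ on $\oP^1\times\oP^1$ with $\supp\sF$ disjoint from $\ol\Delta$ and $H^\ast(\sF(-1,-1))=0$. Since the support is compact and avoids $\ol\Delta$, this is equivalently a sheaf on $\oP^1\times\oP^1-\ol\Delta$ with compact $1$-dimensional support. Lemma~\ref{-2,0} supplies the remaining vanishings $H^\ast(\sF(-2,0))=H^\ast(\sF(0,-2))=0$ in (ii). Finally, (i) drops out of the resolution \eqref{F}: by K\"unneth, $\chi(\sO(-1,0))=\chi(\sO(0,-1))=0$, so taking Euler characteristics gives $\chi(\sF)=2n=\dim V$.

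For the reverse direction, given $\sF$ satisfying (i) and (ii), Proposition~\ref{FtoK} returns an $O(-1)$-structure on $V':=H^0(\sF)^\sigma$, which Lemma~\ref{-2,0} upgrades to a pluricomplex structure in view of (ii). The only non-formal step is to verify $\dim_{\R} V'=\dim V$, i.e.\ $h^0(\sF)=\dim V$. For this I would invoke the resolution
\[
0 \to \sO(-1,0)^{\oplus n}\oplus\sO(0,-1)^{\oplus n} \to \sO^{\oplus 2n} \to \sF \to 0
\]
extracted in the proof of the preceding proposition. Its long exact cohomology sequence, combined with $H^\ast(\sO(-1,0))=H^\ast(\sO(0,-1))=0$ on $\oP^1\times\oP^1$ (using Serre duality for the top term) and the vanishing of $H^2(\sF)$ by $1$-dimensionality, collapses to $h^0(\sF)=2n$ and $h^1(\sF)=0$. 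Hence $h^0(\sF)=\chi(\sF)$, and (i) gives $\dim_{\R} V'=\chi(\sF)=\dim V$, so $V'\cong V$.

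Bijectivity of the resulting correspondence is then immediate from Proposition~\ref{FtoK}, once one observes that the additional pluricomplex condition on the left is transported to the additional vanishings of (ii) on the right by Lemma~\ref{-2,0}, and vice versa. The main obstacle is precisely the Euler-characteristic matching, which explains the appearance of (i); it hinges on the automatic vanishing of $H^1(\sF)$ from the canonical resolution. Everything else is a formal transport of already-established results.
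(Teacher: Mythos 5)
Your proof is correct and follows the same route as the paper, which simply cites Proposition \ref{FtoK} and Lemma \ref{-2,0}; your additional Euler-characteristic bookkeeping ($h^0(\sF)=2n$, $h^1(\sF)=0$ via the canonical resolution) just makes explicit the dimension matching that the paper leaves implicit.
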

\begin{proof} This follows immediately from Proposition \ref{FtoK}  and from Lemma \ref{-2,0}.
\end{proof}

\begin{remark} As we show in the Appendix, such a sheaf automatically satisfies $H^\ast(\sF(p,q))=0$ for all $p,q\in \oZ$ with $p+q=-2$. This seems to reflect an interesting property of the $\sigma$-invariant part $\Theta^\sigma$ of the theta divisor of a characteristic curve $S$. 
\end{remark}


Let us give a matrix description of isomorphism classes of pluricomplex structures on $V$.
\begin{proposition} Let $V$ be a $2n$-dimensional vector space. The set of isomorphism classes of pluricomplex structures on $V$ is in bijection with
$$ \left\{(X,Y)\in \gl(n,\oC)\times\gl(n,\oC);\;\forall_{\zeta\in\oP^1} \forall_v \enskip(X+\zeta Y)v\neq \pm \bar{\zeta}\bar{v}\right\}/GL(n,\cx),$$
where the action of $GL(n,\cx)$ is given by $g.(X,Y)=\bigl(gX\bar g^{-1}, gY\bar g^{-1}\bigr)$.
\label{matrices}\end{proposition}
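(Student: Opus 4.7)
The plan is to exploit the identification $V^\cx \cong \cx^n \otimes \cx^2 = \cx^n \oplus \cx^n$ induced by any pluricomplex structure. First I would take the long exact cohomology sequence of the defining short exact sequence $0 \to \sO(-1)\otimes\cx^n \to \sO\otimes V^\cx \to \sO(1)\otimes\cx^n \to 0$; using the basis $\{1, \zeta\}$ of $H^0(\sO(1))$, this identifies $V^\cx \cong \cx^n \oplus \cx^n$ so that $V^{1,0}_\zeta = \{(-\zeta a, a) : a \in \cx^n\}$ (the kernel of evaluation at $\zeta$). A direct check shows that the only $\cx$-linear automorphisms of $V^\cx$ preserving this entire family are the block maps $g \otimes I_2$ with $g \in GL(n, \cx)$, so $GL(n,\cx)$ is precisely the residual gauge.

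Next I would encode the real structure on $V^\cx$ (coming from $V$ being real) as an antilinear involution $\tau(v) = T\bar v$, with $T = \bigl(\begin{smallmatrix} A & B \\ C & D \end{smallmatrix}\bigr)$ satisfying the involution relation $T\bar T = I$. The transversality $V^{1,0}_\zeta \cap \tau V^{1,0}_\zeta = 0$ evaluated at $\zeta = 0$ immediately forces $B$ to be invertible. Solving the four block equations of $T\bar T = I$ under this invertibility yields $A = -B\bar D\bar B^{-1}$ and $C = (I - D\bar D)\bar B^{-1}$, with the two remaining equations automatic; thus the real structure is freely parametrized by $(B, D) \in \gl(n, \cx)^2$ with $B$ invertible. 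The action $T \mapsto (g \otimes I_2) T (\bar g^{-1} \otimes I_2)$ transforms each block by $M \mapsto g M \bar g^{-1}$.

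The technical heart is to rewrite the remaining transversality conditions. A direct computation of $\tau V^{1,0}_\zeta$ followed by substitution of the formulas for $A, C$ produces the identity
\begin{equation*}
B - \bar\zeta A + \zeta D - |\zeta|^2 C \;=\; \bigl[(B + \zeta D)\,\overline{(B + \zeta D)} - |\zeta|^2 I\bigr]\, \bar B^{-1},
\end{equation*}
so that transversality at every $\zeta \in \cx$ is equivalent to $M\bar M$ not having $|\zeta|^2$ as an eigenvalue, where $M := X + \zeta Y$ with $X = B$ and $Y = D$. I would then prove this is equivalent to the nonexistence of $v \neq 0$ with $Mv = \pm \bar\zeta \bar v$: the easy direction is that $Mu = \pm \bar\zeta \bar u$ yields $\bar M M u = |\zeta|^2 u$ at once, hence, upon conjugation, $M \bar M \bar u = |\zeta|^2 \bar u$; for the converse, from an eigenvector $w$ of $\bar M M$ with $p = Mw$, the pair $u_\pm := w \pm \bar p / \zeta$ satisfies $M u_\pm = \pm \bar\zeta \bar u_\pm$ by direct computation, and at least one is nonzero (both vanishing would force $w = 0$). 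The endpoint cases $\zeta = 0$ (giving $X$ invertible) and $\zeta = \infty$ (which, after homogenising, becomes $Yv \neq \pm \bar v$, equivalent to invertibility of $I - D\bar D$) follow from the same identity.

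Reversibility is then immediate: any pair $(X, Y)$ satisfying the stated condition yields $T$ with $T\bar T = I$ and the transversality, hence a pluricomplex structure on the real subspace $V = (V^\cx)^\tau$. Compatibility of the $GL(n, \cx)$-action delivers the bijection asserted in the proposition. I expect the main obstacle to be the matrix identity together with the constructive step $\bar M M w = |\zeta|^2 w \Rightarrow M u_\pm = \pm \bar\zeta \bar u_\pm$, both requiring careful bookkeeping with complex conjugates.
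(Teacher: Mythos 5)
Your proposal is correct, but it normalises in the opposite way from the paper. The paper fixes the real structure: it decomposes $V^\cx=V^{1,0}_0\oplus V^{0,1}_0$ using $J_0$, so that conjugation becomes the standard swap $\tau(v,w)=(\bar w,\bar v)$, reads off the family as $V^{1,0}_\zeta=\{(Xv+\zeta Yv,\zeta v)\}$ from the bundle map $A+B\zeta$ (with $X=B_2^{-1}$, $Y=B_1B_2^{-1}$, invertibility of $B_2$ coming from $V^{1,0}_\zeta\cap V^{1,0}_{\zeta'}=0$), and takes the residual gauge to be the stabiliser of $J_0$. You instead fix the family of subspaces in standard position $V^{1,0}_\zeta=\cx^n\otimes(-\zeta,1)$ and parametrise the real structure by a matrix $T$ with $T\bar T=I$, solving the block equations to express everything through $(B,D)$. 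Both routes land on the same pair $(X,Y)$ modulo the same $GL(n,\cx)$-action, and I checked your block computations: the reductions $A=-B\bar D\bar B^{-1}$, $C=(I-D\bar D)\bar B^{-1}$ do make the remaining two equations of $T\bar T=I$ automatic, your factorisation $B-\bar\zeta A+\zeta D-|\zeta|^2C=\bigl[(B+\zeta D)\overline{(B+\zeta D)}-|\zeta|^2I\bigr]\bar B^{-1}$ is correct, and the construction $u_\pm=w\pm\bar p/\zeta$ does satisfy $Mu_\pm=\pm\bar\zeta\bar u_\pm$ with $u_++u_-=2w$. What the paper's route buys is brevity: it never has to solve the involution equations, since the real structure is standard from the start. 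What your route buys is that it proves two things the paper leaves implicit: the unproved Remark following the proposition (equivalence of the stated condition with $\det\bigl((X+\zeta Y)(\bar X+\bar\zeta\bar Y)-|\zeta|^2I\bigr)\neq0$), and the identification of the residual gauge as the full stabiliser of the family, which ties the statement directly to the description of a pluricomplex structure as an isomorphism $e:V^\cx\to\cx^n\otimes\cx^2$ transversal to its conjugate given after Definition \ref{definition}. The only point worth flagging is cosmetic: in passing between $M\bar M$ and $\bar MM$ you should note they have the same eigenvalues (standard for products taken in either order), and the degenerate cases $\zeta=0,\infty$ need the separate treatment you already indicate.
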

\begin{remark} The condition on $(X,Y)$ is equivalent to
$$ \det\bigl( (X+\zeta Y)(\bar X+\bar\zeta\bar Y)-|\zeta|^2 I\bigr)\neq 0, \enskip \forall_{\zeta\in\oP^1}.$$
\end{remark}
\begin{proof} A pluricomplex structure is given by a map $O(-1)\otimes \cx^n \to O\otimes V^\cx$. Such a map must be of the form $A+B\zeta$, where $A$ and $B$ are linear maps from $\cx^n$ to $V^\cx$.
Let us fix the complex structure $J_0$ and decompose $V^\cx$ into $V^{1,0}\oplus V^{0,1}$ for this complex structure. Thus $V^\cx=\cx^n\oplus \cx^n$, the map $A$ becomes $Av=(v,0)$ and complex conjugation is replaced by the real structure $\tau(v,w)=(\bar w,\bar v)$. Let $B=(B_1,B_2)$. Then $V^{1,0}_\zeta=\{(v+\zeta B_1v,\zeta B_2 v);\;v\in \cx^n\}$ and since $V^{1,0}_\zeta\cap V^{1,0}_{\zeta^\prime} =0$ for $\zeta\neq \zeta^\prime$, we conclude that $B_2$ is invertible. Thus we can write $V^{1,0}_\zeta=\{(Xv+\zeta Yv,\zeta  v);\;v\in \cx^n\}$, where $X=B_2^{-1}$ and $Y=B_1B_2^{-1}$. The condition that $\tau\bigl(V^{1,0}_\zeta\big)\cap V^{1,0}_\zeta=0$ translates into the one on $(X,Y)$ given in the statement. We also observe that the invertibility of $X$ implies that $V^{1,0}_\zeta\cap V^{1,0}_{\zeta^\prime} =0$ for $\zeta\neq \zeta^\prime$. Finally, the $GL(n,\cx)\subset GL(2n,\oR)$ fixing $J_0$ acts on $V^\cx$ by $g.(v,w)=(gv,\bar gw)$, and its induced action on $\{(X,Y)\}$ is as in the statement.\end{proof}

\begin{remark}  The proof of the above proposition shows that the whole pluricomplex structure is determined by $3$ complex structures $J_0,J_\infty$ and $J_1$. Also, the set of triples which determine a pluricomplex structure is open. We shall, however,  see shortly that there are no pluricomplex structures for odd $n$.\end{remark}

\begin{remark} If  a pluricomplex structure is given by a pair of matrices $X,Y$ as in Proposition \ref{matrices}, then the map $M(\zeta,\eta):\sO(-1,0)\otimes \cx^n\oplus \sO(0,-1)\otimes \cx^n\to \sO\otimes \cx^{2n}$ defining $\sF$ in \eqref{F} is given by 
$$ M(\zeta,\eta)=\begin{pmatrix} X+\zeta Y & -1\\ \zeta & \eta\ol X-\ol Y\end{pmatrix},$$
and the set-theoretic support of $S$ is 
$$\{(\zeta,\eta);\; \det M(\zeta,\eta)=0\}=\{(\zeta,\eta);\; \det\bigl((\eta\ol X-\ol Y)(X+\zeta Y)+\zeta I\bigr)=0\}.$$
\label{M}\end{remark}

\begin{proposition} If a $2n$-dimensional vector space admits a pluricomplex structure, then $n$ is even.\label{even}
\end{proposition}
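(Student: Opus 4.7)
The plan is to use the explicit matrix characterisation from Proposition \ref{matrices}, augmented with the observation that the $\sigma$-sheaf structure constrains an underlying real/quaternionic parity. A pluricomplex structure on $V$ corresponds to a pair $(X,Y)\in \gl(n,\cx)^2$ satisfying the condition that the real-valued function
$$p(\zeta) \;:=\; \det\bigl((X+\zeta Y)(\bar X + \bar\zeta \bar Y) - |\zeta|^2 I\bigr)$$
never vanishes on $\oP^1$. Reality of $p$ follows from the identity $\det(AB - tI)=\det(BA-tI)$ applied to $A=X+\zeta Y$, $B=\bar X + \bar\zeta \bar Y$, $t=|\zeta|^2$, which gives $\overline{p(\zeta)}=p(\zeta)$. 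Since $\oP^1\cong S^2$ is connected and $p(0)=|\det X|^2>0$ (recall $X$ is invertible), the non-vanishing forces $p>0$ everywhere, and taking $|\zeta|\to\infty$ one gets the necessary condition $\det(Y\bar Y - I)>0$.

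I would then observe the suggestive boundary behaviour: at $Y=0$ the quantity $\det(-I)=(-1)^n$ is $-1$ for $n$ odd, immediately obstructing the positivity condition at $\zeta=\infty$. This suggests that the sign $\mathrm{sgn}\det(Y\bar Y -I)$ is in fact fixed, modulo $(-1)^n$, by a topological invariant of the pluricomplex data, which I would now identify as follows. Using Proposition \ref{class}, any pluricomplex structure determines a $\sigma$-sheaf $\sF$ on $\oP^1\times \oP^1$ with $\chi(\sF)=2n$; in the simplest (hypercomplex) case, the characteristic curve is the diagonal $\Delta$, and $\sF \cong \sO(1)^n$ on $\Delta \cong \oP^1$. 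On $\oP^1$ with the antipodal $\sigma$, the natural $\sigma$-action on $\sO(1)$ is quaternionic ($\sigma^2=-I$ on $H^0(\sO(1))=\cx^2$), so promoting $\sO(1)^n$ to a $\sigma$-sheaf with $\sigma^2=+I$ requires twisting by an automorphism $g\in GL(n,\cx)$ with $g\bar g = -I$. Taking determinants yields $|\det g|^2 = (-1)^n$; since the left-hand side is non-negative, $n$ must be even.

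To extend the parity argument to arbitrary characteristic curves I would argue either (a) via deformation: the set of pluricomplex structures with fixed $\dim V=2n$ is connected to a hypercomplex one by a path of strongly integrable sigma-sheaves along which the mod-$2$ invariant is preserved, or (b) via direct matrix analysis using the block form
$$U^{-1}\bigl((X+\zeta Y)^{\oR} - K_\zeta^\oR\bigr) U \;=\; \begin{pmatrix} X+\zeta Y & \bar\zeta I \\ \zeta I & \bar X + \bar\zeta \bar Y \end{pmatrix},$$
which diagonalises the real representation under the standard complexification; its determinant recovers $p(\zeta)$, and the $(-1)^n$ sign appearing when one degenerates $(X,Y)\to(X,0)$ should be shown to be a homotopy invariant of the pluricomplex locus.

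The main obstacle is closing the parity argument uniformly in the degree $k$ of the characteristic curve: the matrix-only analysis naturally delivers the obstruction at $Y=0$, but the pluricomplex locus in $\gl(n,\cx)^2$ is not connected, so one cannot simply deform $Y\to 0$ within it. The sheaf-theoretic interpretation via $g\bar g =-I$ is conceptually the cleaner route, but it requires identifying the correct $\oZ/2$-invariant on real sheaves on the (possibly singular, positive-genus) characteristic curve $S$ that reduces to the classical real-vs-quaternionic distinction on $\sO(1)$ over $\oP^1$ in the hypercomplex limit.
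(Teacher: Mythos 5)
There is a genuine gap, and it is the one you flag yourself: neither of your two proposed routes to the general case goes through, so the argument is complete only when the characteristic curve is the diagonal. Your necessary condition $\det(Y\bar Y-I)>0$ is correct but carries no parity information on its own (for $n=1$ it is satisfied by any $Y$ with $|Y|>1$; the obstruction $(-1)^n$ is visible only at $Y=0$, which you cannot reach by a deformation staying inside the pluricomplex locus, since that locus is disconnected --- the degree $k$ of the characteristic curve already separates components). Your quaternionic-twist computation $g\bar g=-I\Rightarrow|\det g|^2=(-1)^n$ is a correct proof in the hypercomplex case, but route (a) would require connecting an arbitrary pluricomplex structure to a hypercomplex one through pluricomplex structures, which is exactly what fails, and route (b) is a restatement of the problem rather than an argument.

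The paper closes the gap by perturbing in the \emph{opposite} direction: since the set of pairs $(X,Y)$ defining a pluricomplex structure is open (Proposition \ref{matrices} and Remark \ref{M}) and the assertion ``$n$ is even'' depends only on $\dim V$, one may replace the given structure by a nearby generic one, for which $S$ is a \emph{smooth} curve of bidegree $(n,n)$ and (by Corollary \ref{VB0}) $\sF$ is a line bundle on $S$. The $\oZ/2$-invariant you were looking for is then simply $\deg\sF \bmod 2$: a $\sigma$-line bundle on a real curve without real points (recall $S\cap\ol\Delta=\emptyset$) has even degree --- this is the correct generalisation of your real-versus-quaternionic dichotomy on $\sO(1)$ over the antipodal $\oP^1$ --- while Riemann--Roch gives
$$\deg\sF=\chi(\sF)+g(S)-1=2n+(n-1)^2-1=n^2,$$
so $n^2$ is even and hence so is $n$. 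Note that this uses the smooth curve of maximal degree $k=n$, the furthest point from the hypercomplex case, which is why no global deformation to the diagonal is ever needed.
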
 
\begin{proof} It follows from Remark \ref{M} and  Proposition \ref{matrices}  that the set of matrices defining $(S,\sF)$  is open and so we can assume that $S$ is smooth and $k=n$. Corollary \ref{VB0} implies that $\sF$ is a line bundle.  Since $\sF$ is a $\sigma$-sheaf and $\sigma$ does not have fixed points on $\supp \sF$,  the degree of $\sF$  is even. On the other hand 
$$\deg(\sF)=\chi(\sF)+g(S)-1=2n+(k-1)^2-1=n^2.$$
Therefore $n$ is even. \end{proof}

On a $4m$-dimensional vector space there are many pluricomplex structures. The well-known ones are hypercomplex structures. They satisfy $J_{-1/\bar\zeta}=-J_{\zeta}$, for all $\zeta\in \oP^1$. This property characterises hypercomplex structures among all pluricomplex structures.
\begin{proposition} A pluricomplex structure is hypercomplex if and only if its characteristic curve is the diagonal $\{(\zeta,\zeta)\}\subset \oP^1\times\oP^1$.\label{HC}\end{proposition}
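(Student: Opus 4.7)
My plan is to handle the two directions separately, with the nontrivial one amounting to a simple dimension count for the characteristic sheaf.

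For the implication ``hypercomplex $\Longrightarrow S=\Delta$'', I will first rewrite the hypercomplex condition $J_{-1/\bar\zeta}=-J_\zeta$ as the equivalent statement $\wh V^{1,0}_\zeta=V^{0,1}_{-1/\bar\zeta}=V^{1,0}_\zeta$ for every $\zeta\in\oP^1$. Substituting this into the description of $S$ provided by Proposition \ref{S}(i) yields
\[ S=\bigl\{(\zeta,\eta)\in\oP^1\times\oP^1 : V^{1,0}_\zeta\cap V^{1,0}_\eta\neq 0\bigr\}. \]
Lemma \ref{-2,0}(ii) then identifies this set with the diagonal $\Delta$ (the intersection is nonzero iff $\zeta=\eta$). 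Here the degree is $k=1$ and the reduced support is the smooth integral curve $\Delta$, so the scheme structure on $S$ coincides with $\Delta$.

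For the reverse implication, assume $S=\Delta$. Being smooth and integral of bidegree $(1,1)$, the diagonal has $k=1$, so Corollary \ref{VB0} together with Remark \ref{VB1} guarantees that $\sF$ is locally free of rank $n/k=n$ on $\Delta\simeq\oP^1$. Consequently its stalk at every point of $\Delta$ has complex dimension $n$. But the stalk at $(\zeta,\zeta)\in\Delta$ is, by the construction of $\sF$ recalled in the introduction, the quotient
\[ V^\cx\big/\bigl(V^{1,0}_\zeta+\wh V^{1,0}_\zeta\bigr), \]
whence $\dim\bigl(V^{1,0}_\zeta+\wh V^{1,0}_\zeta\bigr)=2n-n=n$. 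Since each of the two subspaces is itself $n$-dimensional, they must both equal their sum, giving $\wh V^{1,0}_\zeta=V^{1,0}_\zeta$ for all $\zeta\in\oP^1$. This is $J_{-1/\bar\zeta}=-J_\zeta$, i.e.\ the hypercomplex condition.

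I do not expect any serious obstacle. The only inputs used are the identification of the stalks of $\sF$ with the quotients $V^\cx/(V^{1,0}_\zeta+\wh V^{1,0}_\eta)$, already provided in the text, and the rank-$n$ local freeness of $\sF$ on a smooth integral characteristic curve of bidegree $(1,1)$, which follows from Corollary \ref{VB0} and Remark \ref{VB1}.
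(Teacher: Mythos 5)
Your converse is correct and takes a genuinely different route from the paper's. The paper argues in coordinates: it invokes the matrix description of the defining map $M(\zeta,\eta)$ given after Proposition \ref{matrices}, evaluates at the points $(0,0)$ and $(1,1)$ of the diagonal to deduce $Y=0$ and $\ol{X}X=-I$, and then appeals to the proof of Proposition \ref{matrices}. You instead read off the identity $\wh{V}^{1,0}_\zeta=V^{1,0}_\zeta$ directly from the rank of the characteristic sheaf: since $S=\Delta$ is integral of bidegree $(1,1)$, Corollary \ref{VB0} makes $\sF$ locally free of rank $n$, so the fibre $V^\cx/(V^{1,0}_\zeta+\wh{V}^{1,0}_\zeta)$ is $n$-dimensional and the two $n$-dimensional summands must coincide. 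This is coordinate-free and arguably cleaner; its only extra input is the assertion, stated in the paper immediately before the proposition, that $J_{-1/\bar\zeta}=-J_\zeta$ characterises hypercomplex structures among pluricomplex ones (the paper's matrix computation in effect reproves this characterisation, so the two arguments are not circular relative to one another, but you should be aware you are leaning on that unproved sentence).

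The forward direction has one incomplete step. You correctly show that the set-theoretic support of $\sF$ is the diagonal (via $\wh{V}^{1,0}_\eta=V^{1,0}_\eta$ together with Lemma \ref{-2,0}), but the characteristic curve carries the scheme structure cut out by the minimal polynomial $p_M$ (equivalently, by the annihilator of $\sF$, cf.\ Proposition \ref{S}), and your justification that this structure is reduced --- ``the degree is $k=1$'' --- is circular: $k$ is by definition the bidegree of $p_M$, and a priori $p_M$ could be the square of the diagonal's equation, giving the non-reduced doubled diagonal with $k=2$. To close this you must check that $\zeta-\eta$ already annihilates $\sF$; for instance, in the matrix description with $Y=0$ and $\ol{X}X=-I$ one row-reduces $M(\zeta,\eta)$ to $\diag\bigl(I_n,(\zeta-\eta)I_n\bigr)$ near the diagonal, so that $\sF\simeq\sO_\Delta^{\oplus n}$ locally and the annihilator is the reduced ideal. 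The paper is equally terse here (this direction is declared ``clear from the above remark''), but since your own converse genuinely uses integrality of $S$, the reducedness is doing real work in the statement and deserves the extra line.
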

\begin{proof} It is clear from the above remark that the characteristic curve of a hypercomplex structure is the diagonal. Conversely, if $(S,\sO_S)$ is the diagonal, then it has bidegree $(1,1)$ and is integral. Proposition \ref{VB0} implies that $\sF$ has rank $n$. If we now use the matrix description of $M$ and $S$ given in Remark \ref{M}, then the fact that $(0,0)\in S$ implies that $\Ker \ol Y X$ is $n$-dimensional. Since $X$ is invertible, $Y=0$. Repeating the argument for $(1,1)\in S$ shows that $\Ker (\ol X X+I)$ is $n$-dimensional and, hence $ \ol X X=-I$. The proof of Proposition \ref{matrices} shows now that the pluricomplex structure is hypercomplex.
\end{proof}

In particular, a hypercomplex structure is a pluricomplex structure of degree $1$. As the next remark shows, pluricomplex structures of degree $1$ are never far from being hypercomplex (see also Example \ref{examples}(ii)).
\begin{remark} A $\sigma$-invariant curve of bidegree $(1,1)$ can be transformed into the diagonal by an element of $PSL(2,\cx)$ acting on $\oP^1\times \oP^1$ via $g.(\zeta,\eta)=\bigl(g.\zeta,(g^\ast)^{-1}.\eta\bigr)$.
Therefore, if $K:\oP^1\to \sJ(V)$ is a pluricomplex structure of degree $1$, then there exists a $g\in PSL(2,\cx)$ such that $K\circ g$ is a linear hypercomplex structure. In particular,  for every $\zeta$ there is a $\zeta^\prime$ such that $J_{\zeta^\prime}$ and $J_\zeta$
anti-commute. \label{degree=1}\end{remark}

\section{A hypercomplex extension of a pluricomplex structure\label{hyperc}}

In this section we shall connect arbitrary pluricomplex structures to hypercomplex structures, but defined on another vector space.
\par
Let $V$ be a $2n$-dimensional real vector space equipped with a pluricomplex structure $K:\oP^1\to \sJ(V)$. Recall that $\wh K=-K\circ \sigma$, and $V^{1,0}_\zeta, V^{0,1}_\zeta$ (resp. $\wh V^{1,0}_\zeta, \wh V^{0,1}_\zeta$) denote spaces of vectors of type $(1,0)$ and $(0,1)$ for $J_\zeta=K(\zeta)$ (resp. $\wh J_\zeta=\wh K(\zeta)$). By definition, $V^{0,1}_\zeta=\wh V^{1,0}_{-1/\bar\zeta}$. 
\par
We consider the vector space $V^\cx\oplus V^\cx$ equipped with the real structure 
\begin{equation}\tau(v,w)=(\bar w,\bar v),\label{tauV}\end{equation}
the fixed-point set of which is 
\begin{equation} V_\Delta=\{(v,\bar{v})\}\subset V^\cx\oplus V^\cx.\label{V_Delta}\end{equation}
We define a hypercomplex structure 
 $K_\Delta:\oP^1\to \sJ(V_\Delta)$ on $V_\Delta$ by specifying spaces $(V_\Delta)^{1,0}_\zeta, (V_\Delta)^{0,1}_\zeta$ of vectors of type $(1,0)$ and $(0,1)$ in $V_\Delta^\cx=V^\cx\oplus V^\cx$:
\begin{equation} (V_\Delta)^{1,0}_\zeta=V^{1,0}_\zeta\oplus \wh V^{1,0}_\zeta,\quad  (V_\Delta)^{0,1}_\zeta=V^{1,0}_{-1/\bar\zeta}\oplus \wh V^{1,0}_{-1/\bar\zeta}.\label{HCC}\end{equation}

\begin{lemma} Formulae \eqref{HCC} define a hypercomplex structure on $V_\Delta$.\label{hc}
\end{lemma}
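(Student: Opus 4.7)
\emph{Plan.} The verification breaks into three steps: formula \eqref{HCC} gives a well-defined complex structure on $V_\Delta$ for each $\zeta\in\oP^1$; the resulting holomorphic family $K_\Delta:\oP^1\to\sJ(V_\Delta)$ is a pluricomplex structure in the sense of Definition \ref{definition}; and its characteristic curve is the diagonal, so that by Proposition \ref{HC} it is hypercomplex.

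A complex structure on the real form $V_\Delta\subset V^\cx\oplus V^\cx$ is the same datum as a splitting of $V_\Delta\otimes_\R\cx=V^\cx\oplus V^\cx$ into two complex-complementary subspaces exchanged by $\tau$. Since the summands in \eqref{HCC} sit componentwise in the two copies of $V^\cx$, complementarity of $(V_\Delta)^{1,0}_\zeta$ and $(V_\Delta)^{0,1}_\zeta$ reduces to $V^{1,0}_\zeta\cap V^{1,0}_{-1/\bar\zeta}=0$ and $\wh V^{1,0}_\zeta\cap \wh V^{1,0}_{-1/\bar\zeta}=0$. The first is Lemma \ref{-2,0}(ii) for $K$, using that $\sigma$ is fixed-point free; the second is the complex conjugate of the first via $\wh V^{1,0}_\zeta=V^{0,1}_{-1/\bar\zeta}=\overline{V^{1,0}_{-1/\bar\zeta}}$. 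The $\tau$-interchange follows from the identities $\overline{V^{1,0}_\zeta}=V^{0,1}_\zeta=\wh V^{1,0}_{-1/\bar\zeta}$ and $\overline{\wh V^{1,0}_\zeta}=V^{1,0}_{-1/\bar\zeta}$, combined with the component swap built into $\tau(v,w)=(\bar w,\bar v)$. Holomorphic dependence on $\zeta$ is automatic from the holomorphicity of $K$ and $\wh K$.

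For the pluricomplex condition, I would split $K_\Delta^*\sV^{1,0}=K^*\sV^{1,0}\oplus \wh K^*\sV^{1,0}$ as holomorphic subbundles of $\sO\otimes(V^\cx\oplus V^\cx)$. The first summand is $\sO(-1)^{\oplus n}$ by pluricomplexity of $K$; the second coincides with $\sigma^*\overline{K^*\sV^{1,0}}$, and since conjugation followed by $\sigma$-pullback preserves the splitting type on $\oP^1$ it is again $\sO(-1)^{\oplus n}$. The analogous argument for the quotient yields $V_\Delta^\cx/K_\Delta^*\sV^{1,0}\simeq \sO(1)^{\oplus 2n}$, so $K_\Delta$ is pluricomplex.

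Finally, the characteristic curve $S_\Delta$ of $K_\Delta$ is, by Proposition \ref{S}(i), the locus where $(V_\Delta)^{1,0}_\zeta$ meets $\wh{(V_\Delta)}^{1,0}_\eta=(V_\Delta)^{0,1}_{-1/\bar\eta}$. Using $\sigma\circ\sigma=\mathrm{id}$ this second subspace equals $V^{1,0}_\eta\oplus \wh V^{1,0}_\eta$, and so the intersection condition decouples into $V^{1,0}_\zeta\cap V^{1,0}_\eta\neq 0$ or $\wh V^{1,0}_\zeta\cap \wh V^{1,0}_\eta\neq 0$. By pluricomplexity of $K$, Lemma \ref{-2,0}(ii) forces $\zeta=\eta$ in either case, so $S_\Delta$ is the diagonal and Proposition \ref{HC} delivers the hypercomplex conclusion. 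The one genuinely delicate point throughout is disciplined bookkeeping of the identifications $\wh J_\zeta=-J_{-1/\bar\zeta}$, $V^{0,1}_\zeta=\wh V^{1,0}_{-1/\bar\zeta}$ and the complex-conjugate identities above; once these are in hand no deeper obstacle arises.
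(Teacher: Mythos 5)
Your argument is correct in substance, and its first two steps coincide with the paper's proof: both reduce complementarity of $(V_\Delta)^{1,0}_\zeta$ and $(V_\Delta)^{0,1}_\zeta$, the $\tau$-interchange, and the $\sO(-1)^{\oplus 2n}$ splitting type componentwise to the pluricomplexity of $K$ together with the conjugation identities $\ol{V^{1,0}_\zeta}=\wh V^{1,0}_{-1/\bar\zeta}$. Where you diverge is the final step. You prove hypercomplexity by computing the characteristic curve of $K_\Delta$ and appealing to Proposition \ref{HC}; the paper instead just reads off from \eqref{HCC} the pointwise identity $(V_\Delta)^{0,1}_\zeta=(V_\Delta)^{1,0}_{-1/\bar\zeta}$ (both sides equal $V^{1,0}_{-1/\bar\zeta}\oplus\wh V^{1,0}_{-1/\bar\zeta}$, using $\sigma^2=\mathrm{id}$), which is precisely the relation $J_{-1/\bar\zeta}=-J_\zeta$ characterising hypercomplex structures among pluricomplex ones. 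Your route is a legitimate alternative, but it is both longer and slightly less tight: the intersection computation identifies only the \emph{set-theoretic} support of the characteristic sheaf with the diagonal, whereas Proposition \ref{HC} concerns the characteristic curve as a complex subspace, namely the zero locus of the minimal polynomial $p_M$. A $\sigma$-invariant curve of bidegree $(k,k)$ whose support is the diagonal is the \emph{reduced} diagonal only when $k=1$, so you still owe an argument excluding a thickened diagonal (equivalently, that $K_\Delta$ has degree $1$); the direct identity supplies this for free. I would replace your last paragraph by that one-line check and keep the rest as is.
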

\begin{proof} Since $K$ is a pluricomplex structure, $(V_\Delta)^{1,0}_\zeta$ and $(V_\Delta)^{0,1}_\zeta$ have trivial intersection for each $\zeta$. It is clear that the involution $\tau$ exchanges $(V_\Delta)^{1,0}_\zeta$ and $(V_\Delta)^{0,1}_\zeta$ and, so, we obtain a holomorphic embedding $K_\Delta: \oP^1\to \sJ(V_\Delta)$. The spaces $(V_\Delta)^{1,0}_\zeta$ form a bundle on $\oP^1$ isomorphic to a direct sum of $\sO(-1)$'s, since both $V^{1,0}_\zeta$ and $\wh V^{1,0}_\zeta$ do. Furthermore, the spaces $(V_\Delta)^{1,0}_\zeta$ have trivial intersection for two different $\zeta$, since $K$ is a pluricomplex structure. Therefore $K_\Delta$ is also a pluricomplex structure. Finally, observe that $(V_\Delta)^{0,1}_\zeta=(V_\Delta)^{1,0}_{-1/\bar\zeta}$ and, hence, Proposition \ref{HC} implies that $K_\Delta$ is a hypercomplex structure.\end{proof}

\medskip

In addition to the hypercomplex structure \eqref{HCC}, which is determined by a given pluricomplex structure, $V_\Delta$ has also a natural complex structure $I(v,\bar v)=(iv,-i\bar v)$. The spaces of $(1,0)$- and $(0,1)$-vectors for $I$ are therefore
\begin{equation} (V_\Delta)_I^{1,0}=V^\cx\oplus 0,\quad (V_\Delta)_I^{0,1}=0\oplus V^\cx.\label{I}
\end{equation}
It is clear that the hypercomplex structure $K_\Delta$ and the complex structure $I$ commute. In other words,
the algebraic structure of $V_\Delta$ is that of a {\em biquaternionic} module, i.e. a module over the ring $M_{2\times 2}(\cx)$ of complex $2\times 2$ matrices.
\par
We now break the symmetry, and identify $V_\Delta$ with the left copy of $V^\cx$.  Let us write $\tilde K$ for $K_\Delta$ and $\tilde J_\zeta$ for $\tilde{K}_\zeta$ acting on this $V^\cx$. We restate \eqref{HCC} and describe the hypercomplex structure $\tilde{K}$ as follows.
\par
For $\zeta\in \oP^1$ we have
\begin{equation} V^\cx=V^{1,0}_\zeta\oplus V^{1,0}_{-1/\bar\zeta}.\label{decomp}\end{equation}
Decompose any $v\in V^\cx$ as $v=v_1+v_2$ with $v_1\in V^{1,0}_\zeta$, $v_2\in V^{1,0}_{-1/\bar\zeta}$. Then
\begin{equation} \tilde J_\zeta(v)=i(v_1-v_2).\label{tilde J}\end{equation}

We now have
\begin{theorem} There is a natural 1-1 correspondence between \begin{itemize}
\item[(i)] pluricomplex structures on $V$ and 
\item[(ii)] hypercomplex structures $\tilde K:\oP^1\to \sJ(V^\cx)$ on $V^\cx$, which commute with the intrinsic complex structure $I$ and satisfy
\begin{equation} V\cap \{X-i\tilde J_\zeta X;\: X\in V^\cx\}=0,\enskip \forall \zeta\in \oP^1.\label{condition}\end{equation}
\end{itemize}\label{amazing}\end{theorem}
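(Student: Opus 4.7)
The plan is to use \eqref{tilde J} for the forward map and to exhibit an explicit inverse via the $+i$-eigenspaces of $\tilde J_\zeta$ acting $I$-linearly. For the forward direction Lemma \ref{hc} already gives the hypercomplex structure $\tilde K$ on $V_\Delta\cong V^\cx$; what remains is to verify the two extra conditions in (ii). Commutation of $\tilde J_\zeta$ with $I$ is immediate from \eqref{tilde J}, since the decomposition $V^\cx = V^{1,0}_\zeta \oplus V^{1,0}_{-1/\bar\zeta}$ is by $I$-complex subspaces and $\tilde J_\zeta(v_1+v_2)=i(v_1-v_2)$ is visibly $I$-linear. A short computation further identifies the set $\{X-i\tilde J_\zeta X:X\in V^\cx\}$ with the first summand $V^{1,0}_\zeta$, so \eqref{condition} reduces to $V\cap V^{1,0}_\zeta =0$, which is automatic for any complex structure on $V$.

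For the inverse direction, given a $\tilde K$ as in (ii), I would define $V^{1,0}_\zeta \subset V^\cx$ to be the $+i$-eigenspace of $\tilde J_\zeta$ regarded as an $I$-linear endomorphism; equivalently $V^{1,0}_\zeta=\{X-i\tilde J_\zeta X:X\in V^\cx\}$, so \eqref{condition} becomes $V\cap V^{1,0}_\zeta=0$. To extract a complex structure $J_\zeta$ on $V$ one needs $V^\cx = V^{1,0}_\zeta\oplus \overline{V^{1,0}_\zeta}$; the key observation is that $W:=V^{1,0}_\zeta\cap\overline{V^{1,0}_\zeta}$ is preserved by the antilinear involution $\tau$ with fixed set $V$, hence $W$ is the complexification of its real form $W^\tau \subseteq V\cap V^{1,0}_\zeta=0$, so $W=0$, and the decomposition follows by a dimension count. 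Holomorphy of the resulting $K:\oP^1\to \sJ(V)$ is inherited from that of $\tilde K$.

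The main obstacle is to verify that $K$ is a pluricomplex structure, i.e.\ an $O(-1)$-structure satisfying the extra transversality condition. For $K^*\sV^{1,0}\cong \sO(-1)^{\oplus n}$, I would use the hypercomplex bundle $\tilde K^*\sV^{1,0}\cong \sO(-1)^{\oplus 2n}$ on $\oP^1$: extending $I$ $\cx$-linearly to the ambient trivial bundle $V^\cx\otimes_\oR\cx$ and splitting by its $\pm i$-eigenspaces (each canonically isomorphic to $V^\cx$) yields, via commutation of $\tilde J_\zeta$ with $I$, a holomorphic decomposition $\tilde K^*\sV^{1,0}=A\oplus B$ with $A\cong K^*\sV^{1,0}$. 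Complex conjugation of the ambient interchanges the two $I$-eigenspaces, and combined with the hypercomplex reality $\tilde J_{-1/\bar\zeta}=-\tilde J_\zeta$ it sends $A|_\zeta$ to $B|_{\sigma(\zeta)}$; hence $B\cong \sigma^*\bar{A}$, a bundle with the same Grothendieck splitting as $A$ on $\oP^1$. Therefore $A\oplus A\cong \sO(-1)^{\oplus 2n}$, forcing $A\cong \sO(-1)^{\oplus n}$. The remaining condition $V^{1,0}_\zeta\cap V^{1,0}_{\zeta'}=0$ for $\zeta\neq \zeta'$ (criterion (ii) of Lemma \ref{-2,0}) follows from the hypercomplex property: the difference $\tilde J_\zeta-\tilde J_{\zeta'}$ is a nonzero real linear combination of the basic quaternionic structures $I_1,I_2,I_3$, so it squares to a negative multiple of the identity and is in particular invertible. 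Finally, the two constructions are manifestly mutual inverses since in both $V^{1,0}_\zeta$ is characterised as the $+i$-eigenspace of the $I$-linear operator $\tilde J_\zeta$.
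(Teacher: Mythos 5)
Your proof is correct and follows essentially the same route as the paper's own (which is considerably terser): the forward map via \eqref{tilde J} together with Lemma \ref{hc}, and the inverse by taking $V^{1,0}_\zeta$ to be the $+i$-eigenspace of the $I$-linear operator $\tilde J_\zeta$. The details you supply --- the $\tau$-invariance argument showing $V^{1,0}_\zeta\cap\overline{V^{1,0}_\zeta}=0$, the holomorphic splitting $\tilde K^\ast\sV^{1,0}=A\oplus B$ forcing $A\simeq\sO(-1)^{\oplus n}$, and the invertibility of $\tilde J_\zeta-\tilde J_{\zeta'}$ --- are precisely the verifications the paper leaves implicit.
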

Condition \eqref{condition} means that there no real eigenvectors for any $\tilde J_\zeta$.
\begin{proof}
 Equations \eqref{decomp} and \eqref{tilde J} describe the passage from (i) to (ii). Conversely, suppose that we have a hypercomplex structure  as in (ii). We set 
$$ V^{1,0}_\zeta= \{X-i\tilde J_\zeta X;\: X\in V^\cx\}.$$
Condition \eqref{condition} implies that this defines a complex structure on $V$. Since $\tilde K$ is a hypercomplex structure, these subspaces form a bundle  over $\oP^1$, which is a sum of $O(-1)$'s, and they have zero intersection for two different values of $\zeta$. Therefore we obtained a pluricomplex structure, and it is clear that the two constructions are inverse to each other and functorial.
\end{proof}


\section{Pluricomplex manifolds\label{plurimflds}}

Let $M$ be a smooth manifold and $E$ a vector bundle of even rank. We can associate to $E$ the twistor bundle $\sJ(E)$ with fibre $\sJ(E)_m=\sJ(E_m)$. If $M$ is even-dimensional, we write $\sJ(M)$ for $\sJ(TM)$.

\begin{definition} A pluricomplex structure on a vector bundle $E$ is a smooth map $K:M\times \oP^1\to \sJ(E)$ such that $K(m,\cdot)$ is a linear pluricomplex structure on $E_m$ for each $m\in M$. The bundle $E$ is then called a {\em pluricomplex bundle}. \label{J(E)}\end{definition}

\begin{definition} Let $M$ be smooth manifold of dimension $4m$. A pluricomplex structure on $M$ is a pluricomplex structure on $TM$.\label{J(M)}\end{definition}

\begin{remark} The structure group of a generic pluricomplex structure is trivial, so such a manifold is parallelisable.\end{remark}
\begin{definition} A pluricomplex structure on $M$ is said to be {\em integrable} if every $J_\zeta=K(\cdot,\zeta)$ is  integrable. A {\em pluricomplex manifold} is a manifold equipped with a integrable pluricomplex structure.\end{definition}

\begin{remark} We can define integrable $O(-1)$-structures on manifolds in exactly the same way.\end{remark}

We now transfer to manifolds the correspondence between pluricomplex and hypercomplex structures described in Theorem \ref{amazing}. Thus, a pluricomplex structure on a vector bundle $E$ corresponds to a biquaternionic structure on its complexification $E^\cx$. In other words, we have a hypercomplex structure on $E^\cx$ commuting with the tautological complex structure.
\par
Let $M$ be a pluricomplex manifold. Since $M$ is real-analytic it has a complex thickening $M^\cx$, equipped with an anti-holomorphic involution $\tau$, the fixed-point set of which is $M$. The hypercomplex structure on $T^\cx M$ extends to an integrable hypercomplex structure on $M^\cx$. Let us adopt the following definition
\begin{definition}
 A {\em bihypercomplex manifold} is a complex manifold equipped with an integrable hypercomplex structure such that the complex and the hypercomplex structures commute.
\end{definition}
\begin{remark} Bihypercomplex manifolds are examples of GS-manifolds of Manin \cite{M} or paraconformal manifolds of Bailey and Eastwood \cite{Ba-E}. See also \cite{JV} for another point of view on these objects.
\end{remark}
The results of the previous section imply:
\begin{proposition} 
 Any pluricomplex manifold $M$ arises as fixed-point set of an involution $\tau$ on a bihypercomplex manifold $(\tilde M, I, J_\zeta)$, such that 
\begin{itemize} 
 \item[(i)] $\tau$ is antiholomorphic with respect to the complex structure $I$, and
\item[(ii)] for any  $\zeta\in \oP^1$, $J_\zeta$ has no eigenvectors in $TM$. 
\end{itemize}
\end{proposition}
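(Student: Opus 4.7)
The plan is to extend the pointwise correspondence of Theorem \ref{amazing} to the manifold setting by means of a complex thickening together with real-analytic (equivalently, $I$-holomorphic) continuation.

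First I would verify that any integrable pluricomplex structure is real-analytic. Fixing $J_0$, the Newlander--Nirenberg theorem makes $M$ into a complex, hence real-analytic, manifold. In $J_0$-holomorphic coordinates, the remaining $J_\zeta$ are encoded (via Proposition \ref{matrices}) by matrix-valued functions $(X,Y)$ which must satisfy the overdetermined elliptic PDE system expressing the integrability of each $J_\zeta$; standard elliptic regularity then forces $(X,Y)$ to depend real-analytically on $m$. This yields a complex thickening $\tilde M$ of complex dimension $\dim_\oR M$ equipped with the tautological complex structure $I$ and an antiholomorphic involution $\tau$ whose fixed-point set is $M$, together with canonical isomorphisms $T^{1,0}_m \tilde M \simeq T^\cx_m M$ for $m\in M$. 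Condition (i) is built into this construction.

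Next, at each $m\in M$ Theorem \ref{amazing} produces a hypercomplex structure $\tilde K(m,\cdot)$ on $T^\cx_m M \simeq T^{1,0}_m\tilde M$ commuting with $I$; the explicit formulas \eqref{decomp}--\eqref{tilde J} show that $\tilde K$ depends real-analytically on $m$. Viewing $\tilde K$ as a real-analytic section of the holomorphic bundle $\End T^{1,0}\tilde M$ over the totally real submanifold $M$, it extends uniquely (after possibly shrinking $\tilde M$ to a neighbourhood of $M$) to an $I$-holomorphic section on $\tilde M$. Together with $I$, this delivers a smooth family $\tilde J_\zeta$ of almost-complex structures on the real manifold $\tilde M$, all commuting with $I$; the quaternionic relations hold on $M$ and propagate to $\tilde M$ by uniqueness of holomorphic continuation. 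Condition (ii) is precisely the pointwise requirement \eqref{condition} of Theorem \ref{amazing}.

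The main obstacle is integrability of each extended $\tilde J_\zeta$ on $\tilde M$. Its Nijenhuis tensor $N_{\tilde J_\zeta}$ is a real-analytic section of a tensor bundle on $\tilde M$ which, because $\tilde J_\zeta$ commutes with $I$ and has been produced by $I$-holomorphic extension, is itself $I$-holomorphic; hence it vanishes on $\tilde M$ if and only if it vanishes on the totally real $M$. To handle the latter, I would use the identification $T^{1,0}\tilde M|_M \simeq T^\cx M$ together with the decomposition \eqref{decomp} to identify the $i$-eigenspace distribution of $\tilde J_\zeta$ in $T^\cx \tilde M|_M$ with the distribution of $(1,0)$-vectors of $J_\zeta$ on $M$ (together with its $\tau$-conjugate counterpart in $T^{0,1}\tilde M|_M$); involutivity of the former along $M$ then reduces to involutivity of $V^{1,0}_\zeta \subset T^\cx M$, which is the integrability of $J_\zeta$ on $M$ and holds by hypothesis. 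Hence each $\tilde J_\zeta$ is integrable, and the collection $(\tilde M, I, \tilde J_\zeta)$ is the desired bihypercomplex manifold.
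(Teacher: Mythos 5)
Your argument follows the same route as the paper: real-analyticity of the structure, a complex thickening $M^\cx$ carrying the tautological complex structure $I$ and an antiholomorphic involution $\tau$ fixing $M$, the pointwise correspondence of Theorem \ref{amazing} applied along $M$, and $I$-holomorphic extension of the resulting hypercomplex structure to the thickening. The paper leaves these steps as assertions in the discussion preceding the proposition, so your extra verifications --- notably that the Nijenhuis tensor of each extended $\tilde J_\zeta$ is an $I$-holomorphic tensor and therefore vanishes as soon as it vanishes on the maximally totally real submanifold $M$, where its vanishing is exactly the assumed integrability of $J_\zeta$ --- are a correct elaboration of the same strategy rather than a different one.
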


As is well known, any hypercomplex manifold has a canonical torsion free connection, called the Obata connection. On a bihypercomplex manifold the complex structure $I$ is automatically parallel for the Obata connection:
\begin{lemma}
 Let $(\tilde M, I, J_\zeta)$ be a bihypercomplex manifold. There exists a unique torsion-free linear connection $\tilde\nabla$ on $\tilde M$ such that both  the complex structure $I$ and the hypercomplex structure $\{J_\zeta\}$ are $\tilde\nabla$-parallel.
\end{lemma}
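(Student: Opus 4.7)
My plan is to separate uniqueness and existence. Uniqueness is immediate: any $\tilde\nabla$ satisfying the hypotheses is a torsion-free linear connection parallelising the hypercomplex structure $\{J_\zeta\}$, and by Obata's theorem such a connection is unique, so $\tilde\nabla$ must coincide with the Obata connection of $\{J_\zeta\}$.

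For existence, I would take $\tilde\nabla$ to be this Obata connection; it is torsion-free with $\tilde\nabla J_\zeta = 0$ for every $\zeta$, so what remains is to prove $\tilde\nabla I = 0$. Setting $T := \tilde\nabla I$, I introduce the modified connection
\[
\nabla'_X Y \;:=\; -I\,\tilde\nabla_X(IY) \;=\; \tilde\nabla_X Y - I\,T(X,Y).
\]
Using $IJ_\zeta = J_\zeta I$ together with $\tilde\nabla J_\zeta = 0$, one checks $T(X, J_\zeta Y) = J_\zeta T(X,Y)$ and hence $\nabla' J_\zeta = 0$ for every $\zeta$. A direct computation yields
\[
T^{\nabla'}(X, Y) \;=\; I\bigl(T(Y, X) - T(X, Y)\bigr),
\]
so $\nabla'$ is torsion-free precisely when $T$ is symmetric in its two arguments; granted this symmetry, uniqueness of Obata forces $\nabla' = \tilde\nabla$, whence $T \equiv 0$ and $\tilde\nabla I = 0$.

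The crux is therefore the symmetry of $T$. From $I^2 = -\mathrm{Id}$ one immediately gets $T(X, IY) = -I\,T(X,Y)$, and expressing the integrability $N_I = 0$ via the torsion-free $\tilde\nabla$ yields
\[
T(IX, Y) - T(IY, X) \;=\; I\bigl(T(X, Y) - T(Y, X)\bigr),
\]
i.e.\ the tensor $(X,Y)\mapsto T(IX,Y) - I\,T(X,Y)$ is symmetric. The hypercomplex parallelism $\tilde\nabla J_\zeta = 0$ places in addition each endomorphism $T_X := T(X,\cdot)$ in the commutant of the quaternion action generated by $\{J_\zeta\}$, while the anticommutation with $I$ comes from $I^2 = -\mathrm{Id}$. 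The main obstacle I anticipate is the final algebraic step: combining these three families of identities for the full $\oP^1$-family of $J_\zeta$'s---in particular exploiting a triple $J_1, J_2, J_3$ of mutually anticommuting members---to force the antisymmetric part $T(X,Y) - T(Y,X)$ to vanish pointwise. The rich biquaternionic algebra of the tangent space, together with $N_I = 0$, should rigidify $T$ enough to conclude its symmetry, and hence $\tilde\nabla I = 0$.
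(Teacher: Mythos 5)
Your reduction is set up correctly: uniqueness does follow from Obata's theorem, the modified connection $\nabla'_XY=-I\tilde\nabla_X(IY)$ does parallelise every $J_\zeta$, its torsion is $I\bigl(T(Y,X)-T(X,Y)\bigr)$ with $T=\tilde\nabla I$, and the identities $T(X,IY)=-IT(X,Y)$, $T(X,J_\zeta Y)=J_\zeta T(X,Y)$ and the Nijenhuis relation $T(IX,Y)-T(IY,X)=I\bigl(T(X,Y)-T(Y,X)\bigr)$ are all valid. But the proof stops exactly where it needs to start: the symmetry of $T$ is asserted as something the ``biquaternionic algebra should rigidify,'' and it is not proved. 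Worse, the three families of identities you have collected are provably insufficient to yield it. Complexify and split $T^\cx\tilde M=V^+\oplus V^-$ into the $\pm i$ eigenspaces of $I$. Since $T(X,\cdot)$ anticommutes with $I$, for $X\in V^+$, $Y\in V^-$ one has $T(X,Y)\in V^+$ while $T(Y,X)\in V^-$; hence symmetry of $T$ is equivalent to ($T$ symmetric on each pure block) \emph{and} ($T$ vanishes identically on the mixed blocks $V^\pm\times V^\mp$). A direct check shows that $N_I=0$ is exactly the symmetry of $T$ on $V^+\times V^+$ and $V^-\times V^-$ and is \emph{identically satisfied} on the mixed blocks (all terms cancel for opposite types), so it imposes no condition there; the relation $T(X,J_\zeta Y)=J_\zeta T(X,Y)$ only says that each $T(X,\cdot)\colon V^\mp\to V^\pm$ is a homomorphism of $M_2(\cx)$-modules, of which there are plenty of nonzero ones; and there is no constraint at all on the dependence on the first slot over the mixed blocks. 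So no amount of algebra with these identities will force the mixed blocks to vanish.

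The missing input is the actual construction of the connection: one must use that $\tilde\nabla$ is the Obata connection given by its explicit bracket formula (or, equivalently, rerun Obata's existence argument for the enlarged structure algebra $\cx\otimes_\oR\oH\simeq M_2(\cx)$, using the integrability of $I$ \emph{and} of the $J_\zeta$ together at the level of Lie brackets), rather than only the abstract properties ``torsion-free and $\tilde\nabla J_\zeta=0$.'' This is in fact all the paper does: its proof of the lemma is the single sentence that one imitates the proof of existence and uniqueness of the Obata connection (citing Alekseevsky--Marchiafava), i.e.\ the intended argument is a rerun of the explicit construction, not a post hoc verification that the hypercomplex Obata connection happens to kill $\tilde\nabla I$.
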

\begin{proof} One imitates the proof of existence and uniqueness of the Obata connection (see, e.g. \cite{A-M}). \end{proof} 


The above discussion proves:
\begin{theorem} Let $M$ be a pluricomplex manifold of dimension $2n$. Then 
 $T^\cx M$ is equipped with a canonical linear connection $\tilde\nabla$ such that
\begin{itemize}
\item[(i)] Both the hypercomplex structure $\tilde K$ and the tautological complex structure $I$ on $T^\cx M$ are $\tilde\nabla$-parallel.
\item[(ii)] Consequently, there is a complex bundle $E$ of rank $n$ on $M$, such that $T^\cx M\simeq E\otimes {\cx^2}$, and the holonomy of $\tilde\nabla$ is a subgroup of $GL(E_m)\simeq GL(n,\cx)$.
\item[(iii)] The connection $\tilde\nabla$ is the restriction of a torsion-free connection on a complex thickening $M^\cx$ with its canonical integrable bihypercomplex structure. 
\end{itemize}\label{nabla}
\end{theorem}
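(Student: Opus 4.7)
The plan is to lift the problem to the complex thickening $M^\cx$, apply the preceding lemma there, and restrict the resulting connection to $M$. In this sense the theorem is mostly a packaging of the results built up in Sections \ref{hyperc} and \ref{plurimflds}.

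First I would invoke the setup discussed immediately above the theorem: since $M$ is real-analytic, it admits a complex thickening $M^\cx$ equipped with an antiholomorphic involution $\tau$ whose fixed-point set is $M$, and, via the fibrewise correspondence of Theorem \ref{amazing} together with integrability of the pluricomplex structure, the data on $M$ extend to a bihypercomplex structure $(I,\{J_\zeta\})$ on $M^\cx$, with $I$ the tautological complex structure of the thickening and $\{J_\zeta\}$ extending $\tilde K$. Applying the preceding lemma to $(M^\cx,I,\{J_\zeta\})$ produces a unique torsion-free linear connection $\nabla$ on $M^\cx$ with respect to which $I$ and every $J_\zeta$ is parallel. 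Define $\tilde\nabla$ to be the restriction of $\nabla$ to $M$ along $M\hookrightarrow M^\cx$. Since $TM^\cx|_M = T^\cx M$, this is a linear connection on $T^\cx M$; (iii) holds by construction, and (i) follows because the parallelism of $I$ and each $J_\zeta$ restricts.

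For (ii), I would exploit the biquaternionic algebra from Section \ref{hyperc}. Fibrewise, the commuting pair $(I,\tilde K)$ endows $T^\cx_m M$ with the structure of a module over $M_2(\cx) \simeq \HH\otimes_\oR\cx$. Morita equivalence, together with the fact that $\cx^2$ is the unique simple $M_2(\cx)$-module, supplies a complex bundle $E$ of rank $n$ on $M$ with $T^\cx M \simeq E\otimes\cx^2$, and moreover $\Aut_{M_2(\cx)}(T^\cx_m M)\simeq GL(E_m)\simeq GL(n,\cx)$. Because $\tilde\nabla$ preserves both $I$ and every $J_\zeta$, it preserves the whole $M_2(\cx)$-action and hence the decomposition $T^\cx M\simeq E\otimes\cx^2$, so its holonomy is contained in $GL(E_m)$.

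The only substantive obstacle in this scheme, namely producing a torsion-free linear connection that simultaneously preserves $I$ and the entire hypercomplex family on $M^\cx$, has already been dispatched by the preceding lemma, whose proof imitates the standard Obata construction. Everything else — the extension to the complex thickening, the restriction to the real locus, and the holonomy reduction — is essentially formal once the bihypercomplex connection on $M^\cx$ is in hand.
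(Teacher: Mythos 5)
Your proposal is correct and follows essentially the same route as the paper, which derives the theorem directly from the preceding discussion: the complex thickening $M^\cx$ with its bihypercomplex structure, the Obata-type lemma giving the unique torsion-free connection preserving $I$ and $\{J_\zeta\}$, restriction to the real locus, and the $M_{2\times 2}(\cx)$-module structure of $T^\cx M$ for the splitting $E\otimes\cx^2$ and the holonomy reduction in (ii). Your explicit appeal to Morita equivalence merely spells out what the paper leaves implicit in its remark that $V_\Delta$ is a biquaternionic module.
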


\medskip

We shall now define a canonical connection on a pluricomplex manifold. Consider the bundle maps
$$ i:TM\to T^\cx M,\quad p:T^\cx M\to TM$$
defined by $i(v)=v$, $p(v)=\re v$.
\begin{definition} Let $M$ be a pluricomplex manifold. The canonical connection $\nabla$ of the pluricomplex structure is defined by
 $$\nabla_X Y=p\bigl( \tilde\nabla_{i(X)} i(Y)\bigr).$$
\end{definition} 

The following is obvious from the definition and Theorem \ref{nabla}(iii):
\begin{proposition} The canonical connection of an integrable pluricomplex structure is torsion-free.
\hfill $\Box$\label{torsion-free}
\end{proposition}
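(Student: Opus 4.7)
The torsion-freeness of $\nabla$ is meant to be a direct consequence of Theorem \ref{nabla}(iii): the strategy is to lift the identity to $M^\cx$ and push it back down. By Theorem \ref{nabla}(iii), $\tilde\nabla$ extends to a torsion-free linear connection on the complex thickening $M^\cx$, which I also denote $\tilde\nabla$. Under the natural identification $T^\cx M \simeq TM^\cx|_M$, the map $i$ corresponds to the inclusion $TM \hookrightarrow TM^\cx|_M$, and $p$ is the real-part projection determined by the antiholomorphic involution $\tau$.

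Given real vector fields $X, Y$ on $M$, I would extend them to $\tau$-invariant (``real'') vector fields $\tilde X, \tilde Y$ on $M^\cx$. Such extensions exist because $M$ is real analytic: decompose $X, Y$ in $TM^\cx|_M$ according to the eigenbundle splitting of $I$, extend the $(1,0)$-parts holomorphically and the $(0,1)$-parts antiholomorphically, and add. Since $\tau|_M = \mathrm{id}$, these extensions are automatically tangent to $M$ along $M$ and their restrictions to $M$ are precisely $i(X)$ and $i(Y)$.

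Now apply torsion-freeness of $\tilde\nabla$ on $M^\cx$ to $\tilde X, \tilde Y$ and restrict to $M$. The left-hand side becomes $\tilde\nabla_{i(X)} i(Y) - \tilde\nabla_{i(Y)} i(X)$, because $\tilde X|_M$ is tangent to $M$ and hence $\tilde\nabla_{\tilde X} \tilde Y|_M$ depends only on the restrictions of $\tilde X$ and $\tilde Y$ to $M$. A brief local computation in $\tau$-adapted coordinates (in which $\tau$-invariant extensions of real vector fields have vanishing transverse components along $M$) gives $[\tilde X, \tilde Y]|_M = i([X, Y])$. Composing with $p$ and using $p \circ i = \mathrm{id}_{TM}$ then yields
$$\nabla_X Y - \nabla_Y X \;=\; p\bigl([\tilde X, \tilde Y]|_M\bigr) \;=\; [X, Y],$$
so $\nabla$ is torsion-free.

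The only mildly technical step is the bracket identity $[\tilde X, \tilde Y]|_M = i([X, Y])$, but it falls out immediately of the construction of the $\tau$-invariant extensions, so I do not anticipate any real obstacle; the remainder of the argument is just unpacking the definition of $\nabla$.
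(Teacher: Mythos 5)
Your argument is correct and is precisely the unpacking of what the paper treats as immediate from the definition and Theorem \ref{nabla}(iii): extend $X,Y$ to $\tau$-invariant vector fields on $M^\cx$, apply torsion-freeness of the connection there, restrict to $M$, and push down with $p$, using $p\circ i=\mathrm{id}$ and $[\tilde X,\tilde Y]|_M=i([X,Y])$. The only point worth flagging is that your $\tau$-invariant extension procedure requires $X,Y$ to be real-analytic; since the torsion is tensorial it suffices to verify the identity on real-analytic (e.g.\ coordinate) frames, so this is harmless.
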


\bigskip

There is a class of natural metrics on a pluricomplex manifold. 
\begin{definition} Let $M$ be a pluricomplex manifold. A Riemannian metric $g$ on $M$ is said to be {\em plurihermitian} if it is invariant with respect to each complex structure $J_\zeta$, $\zeta\in \oP^1$.
\end{definition}

On the other hand, an analogous definition of  a {\em plurik\"ahlerian} metric does not seem to be very interesting: since such a definition leads to reduction of the holonomy group, we expect that such a metric (at least on simply connected manifolds) is a product of a hyperk\"ahler metric and a flat metric. Instead, we adopt the following definition:

\begin{definition} Let $M$ be a pluricomplex manifold. A Riemannian metric $g$ on $M$ is said to be {\em plurik\"ahlerian} if it is a restriction to $TM$ of an (indefinite) metric $\tilde{g}$ on $T^\cx M$ which satisfies the following two conditions:
\begin{itemize} 
\item[(i)] $\tilde g$ is hyperhermitian for the hypercomplex structure on  $T^\cx M$ and satisfies $\tilde g(Iv,Iw)=-\tilde g(v,w)$  for the tautological complex structure $I$.
\item[(ii)] $\tilde g$ is 
 parallel for the canonical connection $\tilde\nabla$.
\end{itemize}
\end{definition} 
\begin{remark} $\tilde g$ must have signature $(\dim M,\dim M)$.\end{remark}
\begin{remark} A plurik\"ahlerian metric is plurihermitian.
\end{remark}

A natural way of obtaining such a metric is from $(2,0)$-forms along fibres of the twistor space of $M^\cx$ (which is a hypercomplex manifold), similarly to the hyperk\"ahler case.  Suppose that we have a
section $\Omega$ of $\Lambda^2\bigl(T^{1,0} M)^\ast$ on $M\times \oP^1$ (here $T^{1,0} M$ at $(m,\zeta)$ are the tangent vectors at $m$ of type $(1,0)$ for $J_\zeta$), which defines a holomorphic symplectic form for each $\zeta\in \oP^1$.
Thus, $\Omega$ is a holomorphic choice of a nondegenerate closed $(2,0)$-form for every $J_{\zeta}$. Since $K^\ast\sV^{1,0}\simeq \sO(-1)\otimes \cx^{n}$, $\Omega$ is a section of $\sO(2)$ and we can write
\begin{equation} \Omega=\omega_0+\zeta\omega_1+\zeta^2\omega_2,
\end{equation}
for some closed complex-valued $2$-forms $\omega_0,\omega_1,\omega_2$ on $T^\cx M$. Alternatively, $\Omega$  is an ordinary symplectic form $\omega$ on $E$ defined in Theorem \ref{nabla}(ii). As in the hypercomplex case, we can define a $\cx$-valued metric on $T^\cx M$:
$$ g^\cx(a+b\zeta,a+b\zeta)=\omega(a,b).$$
 Let $\tilde g$ be the real part of $g^\cx$. By construction $\tilde g$ is parallel for the canonical connection $\tilde\nabla$. Condition (i) of the above definition is also satisfied, so we obtain a plurik\"ahlerian metric as soon as  its restriction to $TM$ is positive-definite. In such a case, we have:
\begin{proposition} The Levi-Civita connection of a plurik\"ahlerian metric obtained as above coincides with the canonical connection of the pluricomplex structure if and only if the restriction of $g^\cx$ to $TM$ is real.\hfill $\Box$
\end{proposition}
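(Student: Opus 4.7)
Since the canonical connection $\nabla$ is torsion-free by Proposition~\ref{torsion-free}, and the Levi-Civita connection is the unique torsion-free metric connection, $\nabla$ coincides with the Levi-Civita connection of $g$ if and only if $\nabla g=0$. The plan is to compute $\nabla g$ in terms of $g^\cx$ and a tensor $\alpha$ that measures the failure of $\tilde\nabla$ to preserve the real subbundle $TM\subset T^\cx M$, and then read off the iff condition.

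For $X,Y\in TM$, decompose $\tilde\nabla_X Y=\nabla_X Y+i\alpha(X,Y)$, where $\alpha(X,Y)\in TM$ is the imaginary part. The torsion-freeness of $\tilde\nabla$ together with $[X,Y]\in TM$ for real vector fields forces $\alpha$ to be symmetric in its arguments. Since $g^\cx$ is built from the $\tilde\nabla$-parallel symplectic form $\omega$ on the bundle $E$ of Theorem~\ref{nabla}(ii), it is itself $\tilde\nabla$-parallel; a direct calculation then yields the key identity
\[
(\nabla_X g^\cx|_{TM})(Y,Z)=i\bigl[g^\cx(\alpha(X,Y),Z)+g^\cx(Y,\alpha(X,Z))\bigr]\quad\text{for all } X,Y,Z\in TM.
\]
Writing $g^\cx|_{TM}=g+ih$ with $g,h$ real symmetric bilinear forms and taking the real part gives
\[
(\nabla_X g)(Y,Z)=-h(\alpha(X,Y),Z)-h(Y,\alpha(X,Z)).
\]

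The forward direction is now immediate: if $g^\cx|_{TM}$ is real then $h=0$, the right-hand side vanishes, and $\nabla g=0$. For the converse, assume $\nabla g=0$. Then the trilinear form $T(X,Y,Z):=h(\alpha(X,Y),Z)$ is simultaneously symmetric in $(X,Y)$ (by symmetry of $\alpha$) and antisymmetric in $(Y,Z)$ (from $\nabla g=0$ together with the symmetry of $h$); the usual argument cycling these two involutions forces $T\equiv 0$, so the image of $\alpha$ lies in the radical of $h$.

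The main obstacle I anticipate is the final step: upgrading the inclusion $\operatorname{im}\alpha\subset\operatorname{rad}(h)$ to $h\equiv 0$. I expect this to be settled by a dichotomy: either $\alpha\not\equiv 0$ with image generically spanning $TM$, which immediately forces $h=0$; or $\alpha\equiv 0$, in which case $\tilde\nabla$ preserves $TM$ and the pluricomplex structure is (essentially) hypercomplex, so that the construction of $g^\cx$ reduces to the hyperk\"ahler one and yields $g^\cx=g\otimes\cx$, tautologically real on $TM$.
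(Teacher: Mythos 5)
The paper states this proposition with the proof omitted, so there is no argument of the authors to compare routes with. Judged on its own terms, your setup is correct: the reduction to $\nabla g=0$ via the torsion-freeness of $\nabla$ (Proposition \ref{torsion-free}), the decomposition $\tilde\nabla_X Y=\nabla_X Y+i\alpha(X,Y)$ with $\alpha$ symmetric, and the identity $(\nabla_X g)(Y,Z)=-h(\alpha(X,Y),Z)-h(Y,\alpha(X,Z))$ all check out, and they settle the ``if'' direction completely.

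The ``only if'' direction, however, contains a genuine gap, and the dichotomy you propose to close it fails on both horns. From $\nabla g=0$ you correctly obtain that the image of $\alpha$ lies in the radical of $h$, but nothing forces that image to span $TM$ when $\alpha\neq 0$ (the rank of $\alpha$ is unconstrained), and when $\alpha\equiv 0$ the claim that $g^\cx|_{TM}$ must be real is false. A concrete obstruction: take a hyperk\"ahler manifold, viewed as a pluricomplex manifold whose characteristic curve is the diagonal, and replace the standard section $\Omega$ by $e^{i\theta}\Omega$ with $0<\theta<\pi/2$. This is still a fibrewise holomorphic symplectic form satisfying all the hypotheses of the construction; here $\tilde\nabla$ is the complexified Obata connection, $\alpha\equiv 0$, and $g^\cx|_{TM}=e^{i\theta}g_{HK}$, so that $g=\cos\theta\,g_{HK}$ is positive definite and plurik\"ahlerian and its Levi-Civita connection is the Obata connection, i.e.\ the canonical connection --- yet $g^\cx|_{TM}$ is not real. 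The same phenomenon occurs on a flat torus with a non-hypercomplex linear pluricomplex structure. So the implication $\nabla g=0\Rightarrow h=0$ is not merely unproved in your argument; it cannot hold without an additional normalisation of $\Omega$ relative to the real structure (a reality condition eliminating the constant phase ambiguity in $\omega$). The most your method can extract from the hypotheses as given is the ``if'' direction together with the identity $h(\alpha(X,Y),Z)\equiv 0$.
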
 

\begin{remark} We can use bihypercomplex extensions to define pluricomplex quotients. Let $M$ be a pluricomplex manifold and $G$ a Lie group acting on $M$ preserving the pluricomplex structure. The action of $G$ extends to a (local) action of $G^\cx$ on $M^\cx$ preserving the hypercomplex structure $\tilde{K}$ and the tautological complex structure $I$. Suppose that there exists a hypercomplex moment map \cite{Joyce} $\tilde\mu: M^\cx\to \fG^\cx\otimes \oR^3$ for the action of $G^\cx$. The zero-level set $\tilde\mu^{-1}(0)$ is invariant under $G^\cx$. We make an additional assumption about $\tilde\mu$: we suppose that $\tilde\mu^{-1}(0)$ is invariant under the involution $\tau$. Then $\bigl(\tilde\mu^{-1}(0)\bigr)^\tau/G$, if a manifold, will have a natural integrable pluricomplex structure and can be regarded as a pluricomplex quotient of $M$ by $G$. Its bihypercomplex extension is an open subset of $\tilde\mu^{-1}(0)/G^\cx$ (the latter quotient simply means identifying points lying on the same local orbit of $G^\cx$  in a neighbourhood of $\bigl(\tilde\mu^{-1}(0)\bigr)^\tau$).
\end{remark}

\begin{example}[Compact examples] Let $K:\oP^1\to \sJ(V)$ be a linear pluricomplex structure with characteristic sheaf $\sF$. Quotienting by a lattice gives an integrable pluricomplex structure on a $4n$-dimensional torus ($4n=\dim V$). Another class of examples is produced by  quotienting $V-\{0\}$ by the group of dilatations generated by a $\lambda>0$. The resulting Hopf manifold is diffeomorphic to $S^{4n-1}\times S^1$, and it carries the induced integrable pluricomplex structure. The characteristic sheaf  at every point of $M_\lambda$ is $\sF$. See example \ref{examples} for more on this.\label{Hopf}
\end{example}

\begin{example}[Plurihermitian structure of $H^3\times \oR$]
The product of the hyperbolic $3$-space and $\oR$ has a natural  integrable pluricomplex structure and the product metric is plurihermitian. The complex structures $J_\zeta$ are parameterised by points on the ideal boundary $S^2\simeq \oP^1$. Once we choose such a point, $H^3$ can be viewed as the upper half-space  $\{(x,y,z); z>0\}$ with metric $\frac{1}{z^2}(dx^2+dy^2+dz^2)$. We change coordinates to $(x,y,t)\in \oR^3$, with $z=e^{-t}$, so that $t$ is the unit speed parameter on the geodesic $(x,y, e^{-t})$. The metric becomes $e^{2t}(dx^2+dy^2)+dt^2$. The metric on $H^3\times S^1$ is
$$ e^{2t}(dx^2+dy^2)+dt^2+ds^2.$$ The complex structure is then defined by
\begin{equation} J_\zeta(dx)=-dy,\quad J_\zeta(dt)=-ds.\label{j}\end{equation}
It is clear that these complex structures are all integrable (each one giving an identification $H^3\times \oR\simeq \cx^2$), they preserve the product metric and that the action of $PSL(2,\cx)\times \oR$ preserves the $2$-sphere of complex structures (and acts on it via the induced action on $\oP^1$). Finally, it is not hard to see that for each point of $H^3\times \oR$, the $2$-sphere of complex structures contains $3$ anticommuting ones (for an $(x,s)\in H^3\times \oR$, let $G\simeq SO(3,\oR)\subset PSL(2,\cx)$ stabilise $x$; then any $3$ complex structures corresponding to an orthonormal basis of $\fG\simeq \so(3,\oR)$ will anticommute).  This means that this $\oP^1$ of complex structures is a pluricomplex structure. In fact, it is a special type of a quaternionic structure, as considered in Remark \ref{degree=1}.\label{basic}
\end{example}

\section{Strongly integrable $O(-1)$- and pluricomplex structures\label{strong}}

Let $M$ be a pluricomplex manifold, i.e. $TM$ is equipped with a pluricomplex structure $K:M\times \oP^1\to \sJ(M)$ such that each complex structure $K(\cdot,\zeta)$ is integrable. Since its complex thickening $M^\cx$ has an integrable bihypercomplex structure, there is a corresponding twistor space, which allows to recover $M^\cx$ as the space of rational curves. We cannot, however, recover $M$ itself.  We now proceed to define a stronger notion of integrability, which allows to obtain $M$ as a space of curves in a  twistor space.
\par
For a pluricomplex structure on a manifold $M$, consider the pointwise sequence \eqref{F}, i.e.
\begin{equation} 0\to\sW_m\to \sO\otimes T_m^\cx M\to \sF_m\to 0, \label{Fm}\end{equation}
for every $m\in M$. We denote the support of $\sF_m$ by $S_m$. 
We consider the following fibration over $M$
\begin{equation} Y=\{(m,\zeta,\eta)\in M\times\oP^1\times\oP^1;\enskip (\zeta,\eta)\in S_m\}\stackrel{\nu}\longrightarrow M ,\label{Y}\end{equation}
 and the corresponding map $p:Y\to  \oP^1\times\oP^1$. We shall assume that $Y$ is smooth (it is certainly smooth whenever  $S_m$ is smooth) and that $p$ is a submersion onto its image. 
\begin{definition} A pluricomplex structure is said to be {\em strongly integrable} if 
$T^{1,0}_\zeta+\wh T^{1,0}_\eta$ is a subbundle of  $T^\cx p^{-1}(\zeta,\eta)$ for all $(\zeta,\eta)\in \im p;$ and the resulting distribution $\sD\subset T^\cx Y$ is involutive (i.e. $[\sD,\sD]\subset \sD$). A manifold equipped with a strongly integrable pluricomplex structure will be called {\em strongly pluricomplex}.\end{definition}


If the pluricomplex structure has degree $r$ and $\dim M=2rk$, then $\sD$ has corank $r$.
\par

The general theory of involutive structures tells us that $\sD^\oR=\sD\cap \overline{\sD}$ is a real integrable distribution on $Y$ and that the space of leaves $Y/\sD^\oR$ is holomorphic. 
\begin{definition} The twistor space $Z$ of a strongly integrable pluricomplex structure is the space of leaves $Y/\sD^\oR$.
\end{definition} 
\begin{remark} $Z$ is a manifold only if the foliation $\sD^\oR$ is simple. In general, one has to view $Z$ as a foliation. \end{remark}

We shall summarise the properties of $Z$. Recall that $\sigma:\oP^1\to \oP^1$ denotes the antipodal map and we have an antiholomorphic involution  $\sigma:\oP^1\times \oP^1\to \oP^1\times \oP^1$, $\sigma(a,b)=\bigl(\sigma(b),\sigma(a)\bigr)$. Its fixed point set is the antidiagonal $\ol\Delta=\{(z,\sigma(z))\}$.

\begin{proposition} (i) $Z$ is equipped with
 a holomorphic map $\rho:Z\to \oP^1\times \oP^1- \ol\Delta$ and an antiholomorphic involution $\tau:Z\to Z$ such that $\rho\circ\tau=\sigma\circ\rho$.\newline
(ii) For every $m\in M$, the curve $\{m\}\times S_m\subset Y$ descends to a holomorphic curve $\hat S_m\subset Z$, such that $\rho_{|\hat S_m}$ is an isomorphism and the normal sheaf $N_m$ of $\hat S_m$ is canonically isomorphic to ${\sF_m}$.
\end{proposition}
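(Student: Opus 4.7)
The plan is to prove both parts by tracing how $\sD$, $\sD^\oR$, and the natural projections and involutions on $Y$ descend to $Z$.

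For part (i), the map $\rho$ arises by factoring $p: Y \to \oP^1 \times \oP^1$ through the quotient: by definition $\sD$ is vertical for $p$, hence so is $\sD^\oR$, and $p$ descends to a holomorphic $\rho: Z \to \oP^1 \times \oP^1$. Its image avoids $\ol\Delta$ by Proposition \ref{S}(i). For the involution $\tau$, I would use the real diffeomorphism $\iota: Y \to Y$ given by $\iota(m, \zeta, \eta) = (m, \sigma(\eta), \sigma(\zeta))$, which is well-defined because each $S_m$ is $\sigma$-invariant. Since $d\iota$ acts as the identity on the $T^\cx M$-summand (to which $\sD$ belongs), one computes
\[ \sD_{\iota(m,\zeta,\eta)} = V^{1,0}_{\sigma(\eta)} + V^{0,1}_\zeta = \overline{V^{1,0}_\zeta + V^{0,1}_{\sigma(\eta)}} = \overline{\sD_{(m,\zeta,\eta)}}, \]
so $d\iota$ exchanges $\sD$ with $\overline{\sD}$ and preserves $\sD^\oR$. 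Therefore $\iota$ descends to $\tau: Z \to Z$, which is antiholomorphic by the exchange of $(1,0)$- and $(0,1)$-parts, and $\rho \circ \tau = \sigma \circ \rho$ is the descent of $p \circ \iota = \sigma \circ p$.

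For part (ii), the key point is that $\{m\} \times S_m \subset Y$ is transversal to $\sD^\oR$: the real distribution $\sD^\oR$ lies in $T^\cx_m M$ (the $M$-direction of $T^\cx Y$), while $T(\{m\} \times S_m)$ sits purely in the $\oP^1 \times \oP^1$-direction, so they intersect trivially. Hence $\{m\} \times S_m$ projects locally diffeomorphically to a submanifold $\hat S_m \subset Z$, and since $\rho$ restricted to $\{m\} \times S_m$ is the projection onto $S_m$, the map $\rho|_{\hat S_m}: \hat S_m \to S_m$ is a bijection. It is a biholomorphism because $\rho$ is holomorphic, and this transports the complex structure of $S_m$ onto $\hat S_m$.

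The isomorphism $N_m \simeq \sF_m$ comes from a natural sheaf map
\[ T^\cx_m M \otimes \sO_{\hat S_m} \longrightarrow N_m, \]
built by including $T^\cx_m M \hookrightarrow T^\cx Y$ and projecting to $T^{1,0}_{\hat s} Z / T^{1,0}_{\hat s} \hat S_m$. Geometrically, $v \in T^\cx_m M$ represents an infinitesimal variation of $m$, and its image in $N_m$ at the point of $\hat S_m$ over $(\zeta,\eta)$ measures the first-order displacement of that point; this vanishes precisely when $v$ preserves the condition $(\zeta,\eta) \in S_m$ to first order, which, by strong integrability, is exactly $v \in \sD_{(m,\zeta,\eta)} = V^{1,0}_\zeta + V^{0,1}_{\sigma(\eta)}$. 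Comparing with the defining sequence \eqref{F}, in which $\sW$ pointwise realises $\sD$, the induced map $\sF_m \simeq (T^\cx_m M \otimes \sO_{\hat S_m})/\sW|_{\hat S_m} \to N_m$ is the desired canonical isomorphism.

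The main obstacle will be pinning down the complex structure on $Z$ precisely enough to justify that the kernel computed above is $\sD$ rather than the smaller $\sD^\oR$ — equivalently, that $\sD/\sD^\oR$ realises the $(0,1)$-part of $T^\cx Z$ in the $M$-direction, so that elements of $\sD$ die in $T^{1,0} Z$ and hence in $N_m$. This is the content of the general theory of involutive structures used to define $Z$, and the strong integrability hypothesis is exactly designed to make this identification work.
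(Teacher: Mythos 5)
The paper states this proposition without proof (it appears only as a summary of the construction of $Z$), so there is no argument of the authors to compare yours against; I can only assess your reconstruction on its own terms. Part (i) is correct and is surely what is intended: $\sD\subset\ker dp^\cx$ forces $p$ to descend to $\rho$, the image avoids $\ol\Delta$ by Proposition \ref{S}, and the involution $\iota(m,\zeta,\eta)=(m,\sigma(\eta),\sigma(\zeta))$ --- well defined because each $S_m$ is $\sigma$-invariant --- satisfies $d\iota(\sD)=\ol{\sD}$ by exactly the computation you give, hence preserves $\sD^\oR$ and descends to an antiholomorphic involution $\tau$ with $\rho\circ\tau=\sigma\circ\rho$.

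Part (ii), however, has a genuine gap at the identification of the kernel of $T^\cx_mM\otimes\sO_{\hat S_m}\to N_m$. You assert that $v$ dies in $N_m$ over $(\zeta,\eta)$ exactly when it ``preserves the condition $(\zeta,\eta)\in S_m$ to first order'', and that this condition is $v\in\sD$. The first-order incidence condition actually characterises $T^\cx p^{-1}(\zeta,\eta)$, and strong integrability only gives the inclusion $\sD\subseteq T^\cx p^{-1}(\zeta,\eta)$, which is strict in general: in Example \ref{Hopf} all the curves $S_m$ coincide, so $T^\cx p^{-1}(\zeta,\eta)=T^\cx_mM$ while $\sD$ has positive corank. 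Moreover, tangency to $p^{-1}(\zeta,\eta)$ only moves the corresponding point of $\hat S_m$ along the fibre $\rho^{-1}(\zeta,\eta)$, which is transverse to $\hat S_m$, so such a $v$ need not vanish in $N_m$ at all. The kernel is really the preimage under the quotient map $Y\to Z$ of $T^{0,1}Z+T^\cx\hat S_m$, and computing it requires pinning down which of $\sD$, $\ol{\sD}$ descends to the $(0,1)$-part of $T^\cx Z$ in the $M$-direction --- precisely the convention you defer to ``the general theory'' in your closing paragraph, except that the dichotomy there is $\sD$ versus $\ol{\sD}$ (and versus $\ker dp^\cx$), not $\sD$ versus $\sD^\oR$, since $\sD^\oR$ is killed already by the quotient map. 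Once that convention is fixed one does get a fibrewise isomorphism $N_m\simeq T^\cx_mM/\sD$, but you must still check that these fibrewise maps assemble into an isomorphism of sheaves, i.e.\ that the surjection $T^\cx_mM\otimes\sO_{\hat S_m}\to N_m$ is holomorphic and its kernel is the subsheaf $\sW_m$ of \eqref{Fm} with its holomorphic structure, not merely its fibres; that comparison of holomorphic structures is the actual content of the claim $N_m\simeq\sF_m$ and is only gestured at in your argument.
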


\begin{remark} We can define the notion of a strongly integrable $O(-1)$-structure and its twistor space in exactly the same way. The above proposition remains true.  See section \ref{zero-mass} for an example of a manifold with a natural strongly integrable $O(-1)$-structure, which is not pluricomplex anywhere.\label{sO(-1)}
\end{remark}

Before stating the converse, let us recall an appropriate result from deformation theory \cite{Pal}. Let  $Z$ be a complex space and $X\subset Z$ a closed complex space. If $X$ is locally complete intersection and the normal sheaf $\sN_{X/Z}$ of $X$ satisfies $H^1(X, \sN_{X/Z})=0$, then the Douady space $H_Z$, parameterising closed complex subspaces of $Z$, is smooth at $X$.  The dimension of the tangent space $T_XH_Z$ is then equal to $h^0(X, \sN_{X/Z})$. Furthermore, if $Z$ is smooth, then the normal sheaf of a locally complete intersection is locally free.

We have:
\begin{theorem} Let $Z$ be a complex manifold equipped with  a holomorphic map $\rho:Z\to \oP^1\times \oP^1 - \ol\Delta$ and an antiholomorphic involution $\tau:Z\to Z$ such that $\rho\circ\tau=\sigma\circ\rho$. Let $\sC_0$ be a maximal connected subset of the Douady space $H_Z$, consisting of complex subspaces  $S$ satisfying the following conditions:
\begin{itemize} 
\item[(i)] 
$\rho_{|S}$ is a biholomorphism onto its image $\rho(S)$, which is a compact divisor of $\sO(k,k)$ for some $k\in \oN$;
\item [(ii)] the sheaf $\sF=(\rho_{|S}^{-1})^\ast \sN_{S/Z}$ satisfies
$H^\ast\bigl(\rho(S), \sF(-1,-1)\bigr)=0.$
\end{itemize}
Let $\sC_1$ be the maximal open subset of $\sC_0$, such that the $S\in \sC_1$ satisfy, in addition:
\begin{equation*} H^\ast\bigl(\rho(S), \sF(-2,0)\bigr)=H^\ast\bigl(\rho(S), \sF(0,-2)\bigr)=0. \end{equation*}
 Then $\sC_0$ is a smooth manifold and its $\tau$-invariant part $M_0=(\sC_0)^\tau$ (resp. the $\tau$-invariant part $M_1=(\sC_1)^\tau$ of  $\sC_1$)  is equipped with a natural strongly integrable and integrable $O(-1)$-structure (resp.   a natural strongly integrable and integrable pluricomplex structure). Moreover the twistor spaces of $M_0$ and $M_1$ are the corresponding open subsets of $Z$.\label{twistor}\end{theorem}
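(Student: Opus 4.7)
The plan is to first prove smoothness of $\sC_0$ and $\sC_1$ using the deformation-theoretic criterion recalled before the theorem, then construct the required $O(-1)$- or pluricomplex structure at each point of $M_0$ or $M_1$ by applying the sheaf-to-structure correspondence $\Psi$ of Section \ref{spheres} fibrewise, and finally verify integrability and strong integrability using the geometric data of $Z$.

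For smoothness, regard the sheaf $\sF$ on $\rho(S)$ as a $1$-dimensional $\sigma$-sheaf on $\oP^1\times \oP^1$ via extension by zero. The hypothesis $H^\ast(\rho(S),\sF(-1,-1))=0$ implies, by the argument already used in Section \ref{spheres} (generation by global sections and purity), that $\sF$ admits a resolution
$$0\to \sO(-1,0)^{\oplus n}\oplus \sO(0,-1)^{\oplus n}\to \sO^{\oplus 2n}\to \sF\to 0.$$
The long exact cohomology sequence then gives $h^0(\rho(S),\sF)=2n$ and $H^1(\rho(S),\sF)=0$. Since $\rho_{|S}$ is a biholomorphism, $H^1(S,\sN_{S/Z})=0$, so $\sC_0$ is smooth at $S$ with $T_S\sC_0\cong H^0(S,\sN_{S/Z})$ of complex dimension $2n$. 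Openness of $\sC_1$ in $\sC_0$ follows from upper semi-continuity of cohomology, and the $\tau$-fixed parts $M_0,M_1$ inherit real-analytic manifold structures of real dimension $2n$.

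For $m\in M_0$ set $V=T_mM_0$, so that $V^\cx=H^0(\rho(S_m),\sF_m)$ carries the real structure induced by $\tau$. Applying $\Psi$ fibrewise yields a holomorphic $\oP^1$ of complex structures $K_m:\oP^1\to \sJ(V)$ with $V^{1,0}_\zeta = H^0(\rho(S_m),\sF_m[-D_\zeta])$. Proposition \ref{FtoK} guarantees that this is an $O(-1)$-structure, and smoothness in $m$ follows from relative cohomology on the universal family over $\sC_0$. For $m\in M_1$, the additional vanishing hypotheses combined with Lemma \ref{-2,0} upgrade $K_m$ to a pluricomplex structure.

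The heart of the argument, and the step I expect to be the main obstacle, is integrability and strong integrability. For integrability of each $J_\zeta$ on $M_0$, I would identify $V^{1,0}_\zeta\subset T_m^\cx M_0$ with those infinitesimal deformations of $\hat S_m$ in $Z$ that vanish to first order along $\hat S_m\cap (\pi_1\circ\rho)^{-1}(\zeta)$; since the subvariety of $\sC_0$ consisting of curves meeting this divisor in a prescribed way is cut out by holomorphic conditions, the $(1,0)$-distribution of $J_\zeta$ is involutive, and Newlander--Nirenberg gives integrability of $J_\zeta$. For strong integrability, I would realise the incidence variety $Y$ of \eqref{Y} as $\{(m,z)\in M_0\times Z : z\in \hat S_m\}$, show that the second projection $q:Y\to Z$ is a submersion, and verify that the leaves of $\sD\cap\overline{\sD}$ coincide with its fibres. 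The identification $Z=Y/\sD^\oR$ then says that $Z$ is the twistor space of $M_0$ (and of $M_1$) in the sense of Section \ref{strong}; the canonical isomorphism $N_m\cong \sF_m$ is built into the construction.
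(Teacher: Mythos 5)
Your proposal is correct and follows essentially the same route as the paper: the deformation-theoretic criterion for smoothness of $\sC_0$, the fibrewise sheaf-to-structure correspondence $\Psi$ of Section \ref{spheres} for the $O(-1)$- and pluricomplex structures, and the identification of $T^{1,0}_\zeta$ with the tangent spaces to the holomorphic submanifolds of curves passing through $\rho^{-1}(\{\zeta\}\times\oP^1)\cap S$ to get involutivity and (strong) integrability. The only step you skip that the paper records explicitly is the verification that each $S$ is a locally complete intersection, which is a hypothesis of the cited smoothness criterion for the Douady space; it follows from condition (i), since $\rho(S)$ is a divisor in a smooth surface and the property is intrinsic.
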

\begin{proof} First of all, it follows from (i) that $S$ is locally complete intersection (this is an intrinsic condition). Moreover,  (ii) implies that $H^1(\rho(S), \sF)=0$. Therefore
$H^1(S, \sN_{S/Z})=0$ for any $S\in \sC_0$, and $H_Z$ is smooth at every point of $\sC_0$, according to  the remark before the statement of the theorem. Since the conditions (i) and (ii) are open, $\sC_0$ is an open subset of the smooth locus of $H_Z$, hence a manifold. 
\par
Since $Z$ is smooth, the normal bundle $\sN_{S/Z}$ is locally free. 
An $O(-1)$- or pluricomplex structure arises as in Section \ref{spheres}. Writing $N$ for $\sN_{S/Z}$, we have  $$\bigl(T^{1,0}_\zeta\bigr)_m M_0\simeq H^0(S,N(-1,0))\otimes \zeta\subset  H^0(S,N)\simeq T^\cx_m M_0,$$ 
for each $\zeta\in \oP^1$. We claim that these complex structures are integrable (so that the $O(-1)$-structure is integrable). Indeed, $H^0(S,N(-1,0))\otimes \zeta\subset  H^0(S,N)$ corresponds to sections of $N$ vanishing at $\rho^{-1}\bigl(\{\zeta\}\times \oP^1\bigr)\cap S$. This is the tangent bundle to the submanifold of $(M_0)^\cx=\sC_0$ consisting of all curves passing  through $\rho^{-1}\bigl(\{\zeta\}\times \oP^1\bigr)\cap S$. Here, a closed complex subspace $A$ {\em passes through} a closed complex subspace $B$, if $A\cap B=B$ as complex spaces. Therefore $T^{1,0}_\zeta$ is involutive.
\end{proof}

\begin{corollary} A strongly integrable pluricomplex structure is integrable.\hfill $\Box$\label{s-i}\end{corollary}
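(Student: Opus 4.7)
The plan is to deduce the corollary directly from Theorem \ref{twistor} by showing that a strongly integrable pluricomplex structure can be recovered from its own twistor space. Given such a structure on $M$, the preceding proposition (before Remark \ref{sO(-1)}) produces a twistor space $Z$ together with $\rho:Z\to \oP^1\times\oP^1-\ol\Delta$, an antiholomorphic involution $\tau$, and for each $m\in M$ a holomorphic curve $\hat S_m\subset Z$ with $\rho_{|\hat S_m}$ biholomorphic onto its image (the characteristic curve $S_m$) and with normal sheaf canonically identified with $\sF_m$.

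Next, I would verify that each $\hat S_m$ lies in the open set $\sC_1\subset H_Z$ of Theorem \ref{twistor}. Condition (i) holds because $\rho_{|\hat S_m}$ is biholomorphic onto a divisor of bidegree $(k,k)$. Condition (ii) and the openness condition defining $\sC_1$ translate, via the identification $\sN_{\hat S_m/Z}\simeq\sF_m$, into the vanishing of $H^\ast(\sF_m(-1,-1))$, $H^\ast(\sF_m(-2,0))$ and $H^\ast(\sF_m(0,-2))$; but these all hold by Proposition \ref{class} applied to the pluricomplex structure at $m$.

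Hence the assignment $m\mapsto \hat S_m$ defines a smooth map $\Phi:M\to M_1=(\sC_1)^\tau$. By Theorem \ref{twistor}, $M_1$ carries an integrable (in fact strongly integrable) pluricomplex structure whose fibrewise data at $\hat S_m$ is reconstructed from $H^0(\hat S_m,\sN_{\hat S_m/Z})^\tau$ exactly as in Section \ref{spheres}. On the other hand, the construction of $Z$ as the leaf space of $\sD^\oR\subset TY$ identifies the Kodaira differential $d\Phi_m:T_mM\to H^0(\hat S_m,\sN_{\hat S_m/Z})^\tau$ with the tautological isomorphism $T_mM\simeq H^0(S_m,\sF_m)^\tau$ given by Proposition \ref{FtoK}, and under this identification the subspace $(T^{1,0}_\zeta)_m$ corresponds to sections of $\sF_m$ vanishing on $\{\zeta\}\times\oP^1$, i.e.\ to $(T^{1,0}_\zeta)_{\hat S_m}M_1$. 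Thus $\Phi$ is a local diffeomorphism intertwining the two pluricomplex structures, so the original structure agrees locally with the one on $M_1$ and is integrable.

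The main obstacle is the identification in the last paragraph: showing that the Kodaira map induced by the twistor construction really is the tautological isomorphism of Proposition \ref{FtoK}, and that it carries each $(T^{1,0}_\zeta)_m$ to the correct subspace of sections. This is essentially a bookkeeping check on how the leaves of $\sD^\oR$ are tangent to the subspaces $T^{1,0}_\zeta+\wh T^{1,0}_\eta$, but it is the only step that requires more than invoking Theorem \ref{twistor} and Proposition \ref{class}.
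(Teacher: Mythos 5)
Your argument is correct and follows the same route the paper intends: the corollary is stated as an immediate consequence of Theorem \ref{twistor}, and your proposal simply makes explicit the chain of identifications (curves $\hat S_m$ lie in $\sC_1$ by Proposition \ref{class} and Lemma \ref{-2,0}; the Kodaira map matches the tautological isomorphism of Proposition \ref{FtoK}) that makes the deduction immediate. The ``bookkeeping'' step you flag is exactly the content of the proposition preceding Remark \ref{sO(-1)} together with the description of $(T^{1,0}_\zeta)_m$ in the proof of Theorem \ref{twistor}, so nothing further is needed.
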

\begin{example} (i) An integrable hypercomplex structure is strongly integrable as a pluricomplex structure. Its twistor space $Z$ is the usual twistor space fibreing over $\oP^1$ and $\rho$ is the composition of the projection onto $\oP^1$ followed by the diagonal embedding into $\oP^1\times \oP^1$.
\par
(ii) Let $M$ be equipped with a strongly  integrable pluricomplex structure of degree $1$. Then $M$ is the parameter space of $\tau$-invariant rational curves with normal bundle isomorphic to a direct sum of $O(1)$'s. Therefore $M$ is a quaternionic manifold. At every point $m\in M$ we have a $\oP^1$ of almost complex structures, which behave algebraically as the $\oP^1$ of unit imaginary quaternions; these arise as in Remark \ref{degree=1}.
The twistor spaces of the pluricomplex structure and of the quaternionic structure coincide. If we view $Z$ as the twistor space of the quaternionic structure, then $Z$ is equipped with a projection onto $M$, the fibres of which are the $\tau$-invariant $\oP^1$-s. Having a pluricomplex structure as well means that this fibration is trivial, $Z\simeq M\times \oP^1$, and the projection $\pi:Z\to \oP^1$ is holomorphic (but $\pi\circ \tau\neq \sigma\circ\pi$, unless $M$ is hypercomplex). The map $\rho:Z\to \oP^1\times \oP^1 - \ol\Delta$ is given by $\rho(z)=\bigl(\pi(z),\sigma\circ\pi\circ\tau(z)\bigr)$.
\par
(iii) The pluricomplex structures on $S^{4n-1}\times S^1$ described in Example \ref{Hopf} are strongly integrable (at least when $\sF$ is locally free on its support $S$). The twistor space $Z_\lambda$ is the total space of the projective bundle on $S$, obtained by quotienting the total space of the vector bundle $\sF$ minus the zero section by the group generated by fibrewise multiplication by $\lambda$. The map $\rho$ is the projection to $S$, followed by the embedding into $\oP^1\times \oP^1-\ol\Delta$. Perhaps other interesting (compact?) examples can be obtained by taking fibrewise Hilbert schemes of points on $Z_\lambda\to S$. 
\par
(iv) The pluricomplex structure on $H^3\times \oR$, described in Example \ref{basic}  is strongly integrable. The twistor space is the total space of a rank $1$ affine bundle over  $\oP^1\times \oP^1 - \ol\Delta$. If we consider instead $H^3\times S^1$, then the twistor space of the pluricomplex structure can be described as the total space of the line bundle $O(1,-1)$ on $\oP^1\times \oP^1 - \ol\Delta$ with the zero section removed. See also section \ref{instantons} for more about this example.
\par
(v) As we shall show in section \ref{hyperbolic}, the moduli space $\sM_{k,m}$ of framed hyperbolic monopoles of charge $k$ and half-integer mass $m$ has a natural strongly integrable pluricomplex structure (at least on an open dense subset). Its twistor space $Z$ is the total space of the line bundle $O(2m+k,-2m-k)$ on $\oP^1\times \oP^1 - \ol\Delta$ with the zero section removed. At any point $x\in \sM_{k,m}$ the characteristic curve of the pluricomplex structure is the spectral curve of the corresponding monopole.
\label{examples}\end{example}

\begin{remark} If a pluricomplex (or $O(-1)$-) structure on $M$ is strongly integrable, then we obtain a canonical complex thickening $M^\cx$ of $M$ by taking all curves in $\sC_1$, not just the $\tau$-invariant. Conversely, we may take an arbitrary complex thickening of $M^\cx$ and extend the pluricomplex structure analytically to (an open neighbourhood of $M$ in) $M^\cx$. We obtain characteristic curves $S_m$ for any $m\in M^\cx$.
\par
Both constructions give us a coincidence space $Y^\prime\subset M^\cx\times Z$ defined as \eqref{Y}, but with $M$ replaced by $M^\cx$. This complexified point of view is the one we shall adopt in the next section. We shall usually write $Y$, instead of $Y^\prime$.\label{complexify}
\end{remark}

\section{Vector bundles with connections on strongly $O(-1)$-manifolds\label{bundles}}

Let $M$ be a smooth manifold equipped with a strongly  integrable $O(-1)$-structure. As explained in Remark \ref{complexify} we have the double fibration of complex manifolds
\begin{equation} M^\cx \stackrel{\nu}{\longleftarrow} Y \stackrel{\eta}{\longrightarrow} Z,\label{DF}\end{equation}
where $\nu$ and $\eta$ are restrictions of the natural projections from $M^\cx\times Z$. For every $m\in M^\cx$, $\nu^{-1}(m)=\{m\}\times S_m$, where $S_m$ is the characteristic curve of the $O(-1)$-structure on $T_m M^\cx$.
\par
As we shall see, a large part of the twistor theory of vector bundles on hypercomplex manifolds goes through for strongly integrable $O(-1)$-manifolds.
\par
Recall \cite{H-M} that a holomorphic vector bundle $E$ on $Z$ is called {\em weakly $M$-uniform} if $h^0(S_m, E|_{S_m})$ is globally constant on $M^\cx$. To such a bundle we can associate a locally free sheaf $\wh{\sE}=\nu_\ast\mu^\ast\sE$ on $M^\cx$. The fibre of the associated vector bundle $\wh E$ is $H^0(S_m, E|_{S_m})$ at each $m$. If the bundle $E$ is equipped with an antiholomorphic automorphism $\sigma$ covering $\tau:Z\to Z$ with square $\pm 1$, then $\wh E$ ha a real or quaternionic structure. Furthermore,  $\wh E$ comes with additional geometric structure, described by the generalised Penrose-Ward transform \cite{W,M,E,B-E,H-M}. We shall be interested in bundles $E$, such that $\wh E$ has a canonical linear connection.
\par 
We consider the bundle $\Omega^\ast_\eta Y$ of $\eta$-vertical holomorphic forms on $Y$, i.e. the exterior algebra of $\Omega^1Y/\eta^\ast\Omega^1Z$. It fits into the short exact sequence
\begin{equation} 0\to \sN_Y^\ast \to \nu^\ast \Omega^1 M^\cx \to \Omega^1_\eta Y\to 0,\label{seq1}\end{equation}
where $\sN_Y$ is (the sheaf of sections of) the normal bundle of $Y\subset M\times Z$. 
Restricted to $\nu^{-1}(m)\simeq S_m$, this sequence is the dual of the restriction of  \eqref{F0} to $S=S_m$, i.e. dual to
\begin{equation} 0\to \sW_S\to T_m^\cx M\otimes  \sO_S\to \sF\to 0,\label{G}\end{equation}
where the stalk of $\sW_S$ at any $(\zeta,\eta)\in S$ is $T^{1,0}_\zeta M+ \wh{T}^{1,0}_\eta M\subset T_m^\cx M$. 
\par
We consider the composition
\begin{equation*} d_\eta: \sO_Y\to \Omega^1Y\to \Omega^1_\eta Y.\end{equation*}
If $\sE$ is a locally free sheaf on $Z$, then $d_\eta$ extends to a relative connection on $\eta^\ast E$, i.e. a first order differential operator
\begin{equation} \nabla_\eta: \eta^\ast \sE\to \eta^\ast \sE\otimes \Omega^1_\eta Y,\label{nabla_eta}\end{equation}
which annihilates the subsheaf $\eta^{-1}\sE\subset \eta^\ast \sE$.\par
If $E$ is weakly $M$-uniform, then the Penrose-Ward transform allows us to define differential operators on $M^\cx$ by pushing down $ \nabla_\eta$. Tensoring \eqref{seq1} with $\eta^\ast \sE$
and taking the long exact sequence of direct image sheaves yields
\begin{multline} 0\to \nu_\ast\bigl( \eta^\ast \sE\otimes \sN_Y^\ast\bigr) \to  \wh\sE\otimes \Omega^1 M^\cx\to \nu_\ast\bigl(\eta^\ast \sE\otimes\Omega^1_\eta Y\bigr)\to \\
\to\nu_\ast^1\bigl(\eta^\ast \sE\otimes \sN_Y^\ast\bigr)\to \bigl(\nu_\ast^1\eta^\ast\sE\bigr) \otimes \Omega^1 M^\cx \to \nu_\ast^1\bigl(\eta^\ast \sE\otimes\Omega^1_\eta Y\bigr) \to 0.
 \label{seq2}\end{multline}
Both $\nu_\ast\bigl( \eta^\ast \sE\otimes \sN_Y^\ast\bigr)$ and $\nu_\ast^1\bigl( \eta^\ast \sE\otimes \sN_Y^\ast\bigr)$ are locally free on $M^\cx$ with the fibres at $m$ isomorphic to $H^0(S_m, E|_{S_m}\otimes \sF^\ast)$ and $H^1(S_m, E|_{S_m}\otimes \sF^\ast)$, respectively. The following fact is almost immediate and well-known.
\begin{proposition} Let $E$ be a weakly $M$-uniform holomorphic vector bundle on $Z$ such that, for every $m\in M^\cx$, $h^0(S_m, E|_{S_m}\otimes \sF^\ast)=0$ and the  map 
$$H^1(S_m, E|_{S_m}\otimes \sF^\ast)\longrightarrow   H^1(S_m,E|_{S_m})\otimes (T_m M^\cx)^\ast$$
induced by the dual to \eqref{G} is injective. Then the pushdown of $\nabla_\eta$ defines a canonical connection on the vector bundle $\wh E$. Furthermore, if  $E$ is equipped with an antilinear automorphism $\sigma$ covering $\tau:Z\to Z$ with square $\pm 1$, then the induced real or quaternionic structure on $\wh E$ is parallel.\label{connex}\end{proposition}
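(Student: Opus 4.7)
The plan is to push the relative connection $\nabla_\eta$ from \eqref{nabla_eta} down to $M^\cx$, exploiting the fact that the two hypotheses collapse the six-term exact sequence \eqref{seq2} into a short isomorphism. Concretely, cohomology-and-base-change (valid once one checks that the relevant higher direct images are locally free, which follows from weak $M$-uniformity together with the two hypotheses via Grauert) identifies the fibre at $m$ of $\nu_\ast(\eta^\ast\sE\otimes \sN_Y^\ast)$ with $H^0(S_m, E|_{S_m}\otimes \sF^\ast)$; the first hypothesis makes this vanish, so the map $\wh\sE\otimes\Omega^1 M^\cx\to \nu_\ast(\eta^\ast\sE\otimes \Omega^1_\eta Y)$ in \eqref{seq2} is injective. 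The second hypothesis is precisely the injectivity of the fourth arrow of \eqref{seq2}, so by exactness the third arrow vanishes. Combining these two facts one obtains a canonical isomorphism
\begin{equation*}
\wh\sE\otimes\Omega^1 M^\cx\;\simeq\;\nu_\ast(\eta^\ast\sE\otimes \Omega^1_\eta Y).
\end{equation*}

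Define the candidate connection $\nabla:=\nu_\ast\nabla_\eta\colon \wh\sE\to \nu_\ast(\eta^\ast\sE\otimes \Omega^1_\eta Y)\simeq \wh\sE\otimes\Omega^1 M^\cx$. It is $\cx$-linear by construction. To verify Leibniz, let $f\in \sO_{M^\cx}(U)$ and $s\in \wh\sE(U)=(\eta^\ast\sE)(\nu^{-1}U)$; then $\nabla_\eta(\nu^\ast f\cdot s)=d_\eta(\nu^\ast f)\otimes s+\nu^\ast f\cdot\nabla_\eta s$. Since $d_\eta(\nu^\ast f)$ is by construction the image of $\nu^\ast(df)\in\nu^\ast\Omega^1 M^\cx$ under the surjection \eqref{seq1}, the first term is the image of $\nu^\ast(df)\otimes s$ under $\eta^\ast\sE\otimes \nu^\ast\Omega^1 M^\cx\to \eta^\ast\sE\otimes \Omega^1_\eta Y$. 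The projection formula identifies its pushforward with $df\otimes s$, so $\nabla(fs)=df\otimes s+f\nabla s$, as required.

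For the final assertion, the real structures on $M^\cx$ and $Z$ induce an antiholomorphic involution $\tau_Y$ on $Y\subset M^\cx\times Z$. An antilinear automorphism $\sigma\colon E\to E$ covering $\tau$ with $\sigma^2=\pm\mathrm{Id}$ pulls back to $\eta^\ast\sigma$, an antilinear automorphism of $\eta^\ast\sE$ covering $\tau_Y$. The relative connection $\nabla_\eta$ depends only on the sheaf-theoretic data $(\sO_Y,\eta,\sE)$, so it is intertwined with $\eta^\ast\sigma$; pushing down via $\nu_\ast$ and restricting to $M=(M^\cx)^{\tau}$ gives an antilinear automorphism of $\wh E|_M$ that is parallel for $\nabla$.

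The main obstacle I anticipate is the first step: verifying carefully that the fibres of the various direct image sheaves appearing in \eqref{seq2} are computed by the $H^0$ and $H^1$ asserted above. This requires a base-change argument that combines Grauert's theorem with a chase through \eqref{seq2} to show that the cohomology-dimension functions are locally constant on $M^\cx$; once this is done, the remainder of the proof is essentially formal.
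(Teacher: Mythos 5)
Your proposal is correct and follows exactly the paper's route: the paper's entire proof is the observation that the two hypotheses force the middle arrow $\wh\sE\otimes\Omega^1 M^\cx\to\nu_\ast\bigl(\eta^\ast\sE\otimes\Omega^1_\eta Y\bigr)$ of \eqref{seq2} to be an isomorphism, which is precisely your first step. The remaining verifications you supply (Leibniz, reality, base change) are the details the paper declares ``almost immediate and well-known,'' and they check out.
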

\begin{proof}  The assumptions imply that $\wh\sE\otimes \Omega^1 M^\cx\to \nu_\ast\bigl(\eta^\ast \sE\otimes\Omega^1_\eta Y\bigr)$ in \eqref{seq2} is an isomorphism.
\end{proof}
We shall describe two classes of vector bundles for which the assumptions of this proposition are satisfied. First of all, 
we consider the case $\sE=\sO_Z$.
We have
\begin{lemma} The map $\Omega^1 M^\cx\to \nu_\ast\Omega^1_\eta Y$  is an isomorphism.\label{isom}\end{lemma}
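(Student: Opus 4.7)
The plan is to substitute $\sE = \sO_Z$ in the long exact sequence \eqref{seq2}, reduce to fibrewise cohomological statements on the characteristic curves $S_m$, and verify them using the resolution \eqref{F}.

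First, with $\sE = \sO_Z$ and $\wh{\sO_Z} = \nu_\ast\sO_Y = \sO_{M^\cx}$, the sequence \eqref{seq2} reads
\[
0 \to \nu_\ast\sN_Y^\ast \to \Omega^1 M^\cx \to \nu_\ast\Omega^1_\eta Y \to \nu_\ast^1\sN_Y^\ast \to (\nu_\ast^1\sO_Y)\otimes\Omega^1 M^\cx.
\]
So the lemma reduces to showing $\nu_\ast\sN_Y^\ast = 0$ together with the injectivity of the connecting map $\nu_\ast^1\sN_Y^\ast \to (\nu_\ast^1\sO_Y)\otimes\Omega^1 M^\cx$. By base change, both are fibrewise conditions at each $m \in M^\cx$. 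I would then use the identification $\sN_Y^\ast|_{S_m} \simeq \sF_m^\vee$ (the dual of the characteristic sheaf), obtained by dualising \eqref{G}, to rewrite the two conditions as: (a) $H^0(S_m, \sF_m^\vee) = 0$, and (b) the natural map $H^1(S_m, \sF_m^\vee) \to H^1(S_m, \sO_{S_m})\otimes(T_m M^\cx)^\ast$ is injective.

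For (a) and (b), I would dualise the resolution \eqref{F} on $\oP^1\times\oP^1$. Since $\sF_m$ is supported on a divisor of class $\sO(k,k)$, the dual is a two-term locally free resolution of $\sF_m^\vee(k,k)$, and twisting by $\sO(-k,-k)$ gives a resolution of $\sF_m^\vee$ whose terms are sums of $\sO(-k,-k)$, $\sO(1-k,-k)$ and $\sO(-k,1-k)$. A K\"unneth computation shows that for every $k\geq 1$ all $H^0$ and $H^1$ of these line bundles vanish. The long exact cohomology sequence on $\oP^1\times\oP^1$ then forces $H^0(\sF_m^\vee) = 0$ (giving (a)) and produces an injection
\[
H^1(\sF_m^\vee)\hookrightarrow H^2\bigl(\sO(-k,-k)\bigr)^{\oplus 2n}.
\]
Via the structure sequence $0\to\sO(-k,-k)\to\sO\to\sO_{S_m}\to 0$ one has $H^2(\sO(-k,-k)) \simeq H^1(\sO_{S_m})$, which yields (b).

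The main step requiring care is checking that the injection in the last display coincides with the natural map appearing in (b), which arises from the fibre sequence $0\to \sF_m^\vee \to (T_m M^\cx)^\ast\otimes \sO_{S_m} \to \sW_{S_m}^\vee \to 0$. This is a naturality statement: both maps come from connecting homomorphisms in a commutative diagram relating the resolution of $\sF_m$ on $\oP^1\times\oP^1$ to its restriction to $S_m$, and I would verify it by a direct diagram chase. I do not expect significant obstacles beyond this bookkeeping.
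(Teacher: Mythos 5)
Your argument is correct, but it reaches the conclusion by a genuinely different route from the paper. Both proofs start from the same reduction: set $\sE=\sO_Z$ in \eqref{seq2} and restrict to a fibre $S=S_m$. The paper then proves bijectivity of the middle map $(T_mM^\cx)^\ast\to H^0(S,\sW_S^\ast)$ directly: it is injective (tacitly using $H^0(\sF^\ast)=0$), and a dimension count via Serre duality on the Gorenstein curve $S$ (using $K_S=\sO_S(k-2,k-2)$ together with the identity $h^1(\sW_S(p,p))=2n\,h^1(\sO_S(p,p))$ for $p\ge 0$) gives $h^0(\sW_S^\ast)=2n$. You instead establish exactness at the two neighbouring spots of the same long exact sequence --- $H^0(S,\sF^\ast)=0$ and injectivity of $H^1(S,\sF^\ast)\to H^1(S,\sO_S)\otimes(T_mM^\cx)^\ast$ --- by dualising the resolution \eqref{F} over $\oP^1\times\oP^1$ and running K\"unneth. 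Your approach has the merit of making the vanishing $H^0(\sF^\ast)=0$ explicit (the paper's sequence \eqref{longG} silently assumes it when it starts at $(T_mM^\cx)^\ast$), and it avoids Serre duality on the possibly singular $S$; the paper's approach avoids the two points in yours that actually need care. Those two points do go through, but they are not pure bookkeeping: (1) identifying the cokernel of the dualised resolution with $\sF^\ast(k,k)$ is the statement $\mathcal{E}xt^1_{\sO_{\oP^1\times\oP^1}}(\sF,\sO)\simeq\sF^\ast(k,k)$, which uses that $S$ is a divisor in a smooth surface (hence Gorenstein, with $\omega_S=\sO_S(k-2,k-2)$) and that $\sF$ is pure; and (2) the compatibility of your connecting map with the one coming from $0\to\sF^\ast\to\sO_S\otimes(T_mM^\cx)^\ast\to\sW_S^\ast\to 0$ requires a map of short exact sequences, i.e.\ a lift $\psi:\sO(-k,-k)\otimes V^\cx\to\sW$ of multiplication by the minimal polynomial $p_M$ through $M$. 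This lift exists precisely because $p_M$ annihilates $\sF$, so that $p_M\cdot(\sO\otimes V^\cx)$ lands in $\Ker(\sO\otimes V^\cx\to\sF)=\im M$; once $\psi$ is in hand, functoriality of connecting homomorphisms gives exactly the identification you want. With those two points spelled out, your proof is complete.
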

\begin{proof} It is enough to show that the corresponding map of vector bundles is an isomorphism at any $m\in M^\cx$.  Setting $\sE=\sO_Z$ and restricting  \eqref{seq2} to $S=S_m$ yields 
\begin{equation} 
0\to (T_m M^\cx)^\ast \to H^0(\sW_S^\ast)\to H^1(\sF^\ast)\to (T_mM^\cx )^\ast\otimes H^1(\sO_S)\to H^1(\sW_S^\ast)\to 0.\label{longG}\end{equation}
 Thus the map is injective, and it is enough to show that the dimensions of the two vector spaces are the same. We consider the free resolution \eqref{F} of $\sF$ tensored with $\sO(p,p)$:
\begin{equation}  0\to \sO(p-1,p)\otimes\cx^n\oplus \sO(p,p-1)\otimes\cx^n \to \sO(p,p)\otimes \cx^{2n} {\to}   \sF(p,p)\to 0,\label{pp}\end{equation}
where $2n=\dim M$.
It implies that, for $p\geq 0$, the induced map $H^0(\oP^1\times \oP^1-\ol\Delta, \sO(p,p))\to H^0(\sF(p,p))$ is surjective. Therefore, so is the map $H^0(S,\sO_S(p,p))\to H^0(\sF(p,p))$. Tensoring \eqref{G} with $\sO_S(p,p)$ and taking the long exact sequence, we obtain that 
$$ 0\to H^1(\sW_S(p,p))\to H^1(\sO_S(p,p))\otimes \cx^{2n}\to H^1(\sF(p,p))\to 0$$
is exact. If $p\geq 0$, then $H^1(\sF(p,p))=0$ by \eqref{pp}, and so $h^1(\sW_S(p,p))=2n h^1(\sO_S(p,p))$. We now compute the dimension of $H^0(\sW_S^\ast)$ as follows (given that the canonical bundle $K_S$ of a curve of bidegree $(k,k)$ is $\sO_S(k-2,k-2)$):
$$ h^0(\sW_S^\ast)=h^1(\sW_S\otimes K_S)= h^1(\sW_S(k-2,k-2))=2n h^1(\sO_S(k-2,k-2))=2nh^1(K_S)=2n,$$
and so the dimensions of $H^0(\sW_S^\ast)$  and $(T_m M^\cx)^\ast $ are the same.
\end{proof}
\begin{corollary} Let $E$ be a holomorphic vector bundle on $Z$, trivial on each $S_m$. Then the induced vector bundle $\wh E$ on $M^\cx$ has a canonical linear connection obtained by pushing down  $\nabla_\eta$.  \label{C1}\end{corollary}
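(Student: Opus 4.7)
The plan is to verify the hypotheses of Proposition \ref{connex} by reducing them, via the triviality of $E|_{S_m}$, to the $\sE = \sO_Z$ case already treated in Lemma \ref{isom}.

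Fix $m \in M^\cx$ and a trivialization $E|_{S_m} \simeq \sO_{S_m}^{\oplus r}$, where $r = \rank E$. Since $\{S_m\}$ is a connected family, $h^0(S_m, E|_{S_m}) = r \cdot h^0(S_m, \sO_{S_m})$ is constant in $m$, so $E$ is weakly $M^\cx$-uniform and $\wh{\sE} = \nu_\ast \eta^\ast \sE$ is locally free on $M^\cx$. Moreover, the trivialization yields direct-sum decompositions
$$H^i\bigl(S_m, E|_{S_m} \otimes \sF^\ast\bigr) \simeq H^i\bigl(S_m, \sF^\ast\bigr)^{\oplus r}, \qquad H^i\bigl(S_m, E|_{S_m}\bigr) \simeq H^i\bigl(S_m, \sO_{S_m}\bigr)^{\oplus r},$$
for $i = 0, 1$, and the coboundary induced by the dual of \eqref{G} tensored with $E|_{S_m}$ decomposes as $r$ copies of its $\sE = \sO_Z$ counterpart.

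Consequently, the vanishing of $h^0(\sF^\ast)$ and the injectivity of $H^1(\sF^\ast) \to (T_m M^\cx)^\ast \otimes H^1(\sO_{S_m})$, both produced by the long exact sequence \eqref{longG} in the proof of Lemma \ref{isom}, immediately imply the analogous statements with $\sF^\ast$ replaced by $E|_{S_m} \otimes \sF^\ast$. Thus both hypotheses of Proposition \ref{connex} are met, and pushing down $\nabla_\eta$ produces the canonical linear connection on $\wh E$.

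The only subtlety, essentially a matter of naturality, is that although the trivializations $E|_{S_m} \simeq \sO_{S_m}^{\oplus r}$ cannot be chosen globally in $m$, the vanishing and injectivity conditions of Proposition \ref{connex} are pointwise statements about morphisms of coherent sheaves on $M^\cx$, which can therefore be verified fibrewise. No substantial obstacle arises here; the proof is essentially a reduction to a case already in hand.
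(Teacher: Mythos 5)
Your proof is correct and takes essentially the same route as the paper, whose entire argument is that the corollary ``follows immediately from Lemma \ref{isom} and Proposition \ref{connex}'': you simply make explicit the reduction, via a fibrewise trivialisation of $E|_{S_m}$, of the two hypotheses of Proposition \ref{connex} to the $\sE=\sO_Z$ statements read off from the exact sequence \eqref{longG}. The only caveat is that the vanishing $h^0(S_m,\sF^\ast)=0$ is presupposed by, rather than produced by, \eqref{longG} as printed (the sequence starts at $(T_mM^\cx)^\ast$ precisely because that term has been dropped), but this is exactly the fact the paper itself relies on there, so nothing essential is missing.
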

\begin{proof} Follows immediately from Lemma \ref{isom} and Proposition \ref{connex}.
\end{proof}
The second class of vector bundles, for which the assumptions of Proposition \ref{connex} are satisfied, is obtained by an appropriate  choice of their degree. Recall that the genus $g$ of the characteristic curve of an $O(-1)$-structure of degree $k$ is $(k-1)^2$.
\begin{proposition} Let $k$ be the degree of the $O(-1)$-structure on $M$ and let $E$ be a weakly $M$-uniform holomorphic vector bundle on $Z$ of rank $l$, such that the degree of $E|_{S_m}$ is  $2l(k^2-k)$ and $h^0(S_m,\sE|_{S_m}\otimes \sF^\ast)=0$ for every $m\in M^\cx$.   Then the induced vector bundle $\wh E$ (of rank $lk^2$) on $M^\cx$ has a canonical linear connection obtained by pushing down  $\nabla_\eta$.\label{C2}\end{proposition}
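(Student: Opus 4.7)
The plan is to apply Proposition~\ref{connex} to $E$. The first hypothesis there, $h^0(E|_{S_m}\otimes\sF^\ast)=0$, is assumed, so the proof reduces to verifying the injectivity of the map
$$H^1(E|_{S_m}\otimes\sF^\ast)\longrightarrow H^1(E|_{S_m})\otimes(T_mM^\cx)^\ast.$$
I expect this to hold trivially once the vanishing $\chi(E|_{S_m}\otimes\sF^\ast)=0$ is established: combined with the hypothesis $h^0=0$ it forces $h^1=0$, making the source of the map zero.

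The key computation is a Riemann--Roch calculation on the characteristic curve $S=S_m$, which has bidegree $(k,k)$ and genus $g=(k-1)^2$. Writing $\dim M=2n$ and assuming $S$ integral, Remark~\ref{VB1} identifies $\sF$ as a locally free sheaf of rank $n/k$ on $S$. The condition $\chi(\sF)=2n$ then pins down $\deg\sF=nk$ by Riemann--Roch. Combined with the numerics of $E$ (rank $l$, degree $2lk(k-1)$), a direct Chern-class computation gives $\deg(E|_S\otimes\sF^\ast)=ln(k-2)$, and hence
$$\chi(E|_S\otimes\sF^\ast)=ln(k-2)+(ln/k)(2k-k^2)=0,$$
as required. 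Proposition~\ref{connex} then applies and produces the canonical connection on $\wh E$.

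For the rank assertion, Riemann--Roch gives $\chi(E|_S)=2lk(k-1)+l(2k-k^2)=lk^2$, so it remains to show $h^1(E|_S)=0$. By Serre duality this is $h^0(E^\ast\otimes K_S)=0$, and by Serre duality again the already-established $h^1(E\otimes\sF^\ast)=0$ translates into $h^0(E^\ast\otimes K_S\otimes\sF)=0$. Since the resolution~\eqref{F} exhibits $H^0(\sF)=V^\cx$ of dimension $2n>0$, any hypothetical nonzero section $s\in H^0(E^\ast\otimes K_S)$, when tensored with a nonzero $t\in H^0(\sF)$, would produce a nonzero section of $E^\ast\otimes K_S\otimes\sF$ (on an integral curve the combined zero locus of two nonzero sections of locally free sheaves cannot cover $S$). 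This contradiction forces $h^0(E^\ast\otimes K_S)=0$.

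The main obstacle I anticipate is the case of a non-integral characteristic curve, where $\sF$ is not necessarily locally free and Remark~\ref{VB1} does not apply directly. There, the on-curve Riemann--Roch must be replaced by a Grothendieck--Riemann--Roch argument on $\oP^1\times\oP^1$ using the resolution~\eqref{F} (the identity $\chi(E|_S\otimes\sF^\ast)=0$ should still persist), and the final multiplication argument for $h^1(E|_S)=0$ likely requires a component-by-component analysis on $S$.
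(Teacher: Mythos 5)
Your proposal is correct and follows essentially the same route as the paper: the paper's one-line proof likewise computes $\deg(E|_{S_m}\otimes\sF^\ast)=lr(g-1)$ so that $\chi(E|_{S_m}\otimes\sF^\ast)=0$, deduces $h^1(E|_{S_m}\otimes\sF^\ast)=0$ from the assumed vanishing of $h^0$, and invokes Proposition~\ref{connex} with the injectivity hypothesis holding vacuously. Your additional Serre-duality argument for $h^1(E|_{S_m})=0$ addresses the rank count that the paper leaves implicit, and your caveat about non-integral characteristic curves (where $\sF$ need not be locally free of rank $n/k$) applies equally to the paper's own proof.
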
 
\begin{proof} The degree of $\sE|_{S_m}\otimes \sF^\ast$ is $lr(g-1)$, where $r=n/k$ is the rank of $\sF$. Therefore $h^1(S_m,\sE|_{S_m}\otimes \sF^\ast)=0$ as well.\end{proof}

\begin{corollary} Suppose that $\sF^\ast\otimes \sF$ is  trivial over each $S_m$. If $L$ is a holomorphic line bundle  over $Z$ such that $L|_{S_m}\in J^{g-1}(S_m)-\Theta(S_m)$ for each $m\in M^\cx$, then the bundle induced from $L\otimes N_Y$ is equipped with a canonical linear connection.\label{C3}
\end{corollary}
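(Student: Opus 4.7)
The plan is to reduce the statement to a direct application of Proposition \ref{C2} with $\sE=L\otimes N_Y$, interpreted on each twistor curve as $\sE|_{S_m}=L|_{S_m}\otimes \sF_m$ (recall that the normal bundle $N_m$ of $\hat S_m$ in $Z$ is identified with $\sF_m$ under $\rho|_{\hat S_m}$). Because we have assumed $\sF_m^\ast\otimes\sF_m$ is trivial on every $S_m$, in particular $\sF_m$ is locally free there, of rank $r=n/k$ (Remark \ref{VB1}). Thus $\sE$ has rank $l=r=n/k$ on every twistor curve, and weak $M$-uniformity of $\sE$ will follow as soon as the two cohomological constraints of Proposition \ref{C2} hold.

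The degree check is a straightforward Riemann--Roch bookkeeping: on the curve $S_m$ (of arithmetic genus $g=(k-1)^2$) we have $\chi(\sF_m)=\dim V=2n$, so
\[
\deg(\sF_m)=\chi(\sF_m)+r(g-1)=2n+\tfrac{n}{k}\bigl((k-1)^2-1\bigr)=nk.
\]
Since $L|_{S_m}$ has degree $g-1$, we obtain
\[
\deg\bigl(\sE|_{S_m}\bigr)=r\deg L|_{S_m}+\deg\sF_m=\tfrac{n}{k}(g-1)+nk=2n(k-1)=2l(k^2-k),
\]
which is exactly the degree appearing in Proposition \ref{C2}.

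For the vanishing hypothesis we compute
\[
\sE|_{S_m}\otimes \sF^\ast=L|_{S_m}\otimes \sF_m\otimes \sF_m^\ast\;\simeq\; L|_{S_m}\otimes \sO_{S_m}^{\oplus r^2},
\]
using precisely the assumed triviality of $\sF^\ast\otimes\sF$ on each $S_m$. Hence
\[
h^0\!\left(S_m,\sE|_{S_m}\otimes\sF^\ast\right)=r^2\,h^0\!\left(S_m,L|_{S_m}\right),
\]
and this vanishes by the defining property of the theta divisor: a line bundle of degree $g-1$ on $S_m$ has no global sections if and only if its class lies outside $\Theta(S_m)$, which is precisely our assumption on $L|_{S_m}$. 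All hypotheses of Proposition \ref{C2} are therefore in place, and the corresponding push-down of the relative connection $\nabla_\eta$ supplies the desired canonical connection on $\wh{\sE}$.

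The only genuine subtlety that deserves care, rather than calculation, is the initial step: $L\otimes N_Y$ has to be made sense of as an object to which the Penrose--Ward machinery of Proposition \ref{connex} applies. The normal sheaf is a priori defined only on the twistor curves, not globally on $Z$; however, inspecting the proof of Proposition \ref{connex}, what is actually used is the coherent sheaf $\eta^\ast\sE$ on $Y$, and for this it suffices to have a coherent sheaf on $Y$ whose restriction to each $\nu$-fibre is $L|_{S_m}\otimes\sF_m$. Such a sheaf is manufactured by tensoring $\eta^\ast L$ with the normal sheaf $\sN_Y$ of $Y\subset M^\cx\times Z$ (whose restriction to $\{m\}\times S_m$ is canonically $\sF_m$). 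With this reading, the argument above goes through verbatim.
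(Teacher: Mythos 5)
Your proof is correct and follows essentially the same route as the paper's: both reduce to Proposition \ref{C2} by computing $\deg\bigl(L|_{S_m}\otimes\sF\bigr)=2r(k^2-k)$ via Riemann--Roch and deducing $h^0\bigl(S_m,L|_{S_m}\otimes\sF\otimes\sF^\ast\bigr)=0$ from the triviality of $\sF\otimes\sF^\ast$ together with $L|_{S_m}\notin\Theta(S_m)$. Your extra remarks on interpreting $L\otimes N_Y$ as a sheaf on $Y$ rather than on $Z$ only make explicit what the paper leaves implicit.
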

\begin{proof} Since $L|_{S_m}\not\in \Theta(S_m)$, $h^0(S_m,\sL|_{S_m}\otimes \sF\otimes \sF^\ast)=0$. The degree of $L|_{S_m}\otimes \sF$ is $rk^2+(k^2-2k)r=2r(k^2-k)$, so the previous proposition applies.
\end{proof}
Observe that the fibre of $\wh{L\otimes N_Y}$ at each $m\in M^\cx$ is $H^0(S_m, L|_{S_m}\otimes \sF)$ and is $kn=rk^2$-dimensional.
\par 
We remark that in Corollaries \ref{C1} and \ref{C3}, and in Proposition \ref{C2},  if  $E$ is equipped with an antilinear automorphism $\sigma$ covering $\tau:Z\to Z$ with square $\pm 1$, then the induced real or quaternionic structure on $\wh E$ is parallel.

\section{Pluricomplex geometry of hyperbolic monopoles: existence\label{hyperbolic}}

Let $H^3$ be the hyperbolic space with scalar curvature $-1$. A solution to Bogomolny equations  on $H^3$
\begin{equation} F_A=\ast d_A\Phi,\label{Bogo}
\end{equation}
where $A$ is an $SU(2)$-connection and  $\Phi$ an $\su(2)$-valued Higgs field, is called a hyperbolic monopole if it has finite energy and the norm of the Higgs field has limit $m>0$ at infinity. This constant $m$ is called {\em mass} of the monopole.  The {\em charge} of a monopole is the topological degree of $\Phi$. The gauge group acts on the space of monopoles of fixed charge $k$ and mass $m$ and the {\em moduli space $\sM_{k,m}$ of framed monopoles} of charge $k$ and mass $m$ is obtained by quotienting by gauge transformations which are $1$ at infinity. 
\begin{remark} It is easy to see, via rescaling, that monopoles of mass $m$ correspond to monopoles satisfying $\|\Phi\|\to 1$ on the hyperbolic space with scalar curvature $-1/m^2$.
\end{remark}

\medskip

We shall now assume that $m\in \frac{1}{2}\oN$, and show that $\sM_{k,m}$ carries a natural pluricomplex structure (at least on an open dense subset), closely related to the well-known hyperk\"ahler structure on the moduli space of Euclidean monopoles. 
\par
 We begin by recalling the twistorial construction of $\sM_{k,m}$ (cf. \cite{A,MS1,MS2,Nash}). A hyperbolic monopole  corresponds to an algebraic curve $S$ in $\oP^1\times \oP^1-\ol{\Delta}$. This curve, called the {\em spectral curve} of the monopole, has bidegree $(k,k)$, no multiple components, is invariant under the involution \eqref{sigma}, and satisfies additional conditions, the most important of which says that the line bundle $L^{2m+k}|_S=O(2m+k,-2m-k)|_S$ is trivial ($L=O(1,-1)$). The manifold $\sM_{k,m}$ can be identified with $\{${\em spectral curves $+$ real trivialisation of $L^{2m+k}|_S$$\}$}. Here, a trivialisation $s$ is real if $s\sigma^\ast s=1$. The idea of Nash \cite{Nash} is then to lift $S$ to the total space $Z$ of $L^{2m+k}-\{0\}$ using the given trivialisation. Thus, points of $\sM_{k,m}$ can be considered as algebraic curves  in $Z$. Nash identifies the normal bundle of such a curve $\hat S$ in $Z$ as
\begin{equation} \wh N\simeq \tilde{E}L^m(k,0)|_S,\label{tildeE}\end{equation}
where $\tilde{E}$ is the holomorphic rank $2$ bundle on $\oP^1\times \oP^1-\ol{\Delta}$ corresponding to the given monopole.
Writing $N$ for the pullback of $\wh N$ to $S$,  Nash also shows that $N(-1,-1)$ has trivial cohomology and, consequently $h^1(S,N)=0$, $h^0(S,N)=4k$. Thus, we are in the situation of Theorem \ref{twistor} and can already conclude that the moduli space of hyperbolic monopoles has a $\oP^1$ of complex structures, which is a strongly integrable (hence integrable) $O(-1)$-structure.
\par
At a given point $x$ of  $\sM_{k,m}$, this  $O(-1)$-structure is pluricomplex provided that
\begin{equation} h^0(S,N(-2,-0))=h^0(S,N(0,-2))=0,\label{vanish?}\end{equation}
where $S$ is the spectral curve of $x$.
\begin{remark} Nash \cite{Nash} claims  to have shown that there are {\em natural} identifications $H^0(S,N)\simeq H^0(S,N(-1,0))\otimes H^0(S,\sO(1,0))$ and $H^0(S,N)\simeq H^0(S,N(0,-1))\otimes H^0(S,\sO(0,1))$. For this, however, one needs \eqref{vanish?}, which is not proved anywhere in \cite{Nash}. In fact, Nash states explicitly \cite[Remark 3.4]{Nash} that his proof of vanishing of $h^0(S,N(-1,-1))$ does not generalise to \eqref{vanish?}.  \label{Nash-gap}\end{remark}
\begin{theorem} For any $k>0$ and $m\in \frac{1}{2}\oN$, the vanishing condition  \eqref{vanish?} holds on an open dense subset of $\sM_{k,m}$.\label{final}
\end{theorem}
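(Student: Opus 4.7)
The plan is to reduce the assertion to the avoidance of a generalized theta divisor at a single monopole, via a Riemann--Roch computation combined with openness and a cluster degeneration.

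First, upper semicontinuity of cohomology shows that the locus $U\subset\sM_{k,m}$ on which \eqref{vanish?} holds is open, and that its complement is a closed analytic subset cut out by a determinantal condition. The antiholomorphic involution $\sigma$ on $\oP^1\times\oP^1$ interchanges $\sO(-2,0)$ and $\sO(0,-2)$, and $N$ is a $\sigma$-sheaf (cf.\ Proposition \ref{S}(ii)), so $H^0(S,N(-2,0))\cong H^0(S,N(0,-2))$ antilinearly; the two vanishings in \eqref{vanish?} are therefore equivalent, and it suffices to prove density of $\{h^0(N(-2,0))=0\}$.

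Second, on a smooth spectral curve $S$ of bidegree $(k,k)$ one has $g(S)=(k-1)^2$; Remark \ref{VB1} identifies $N$ as locally free of rank $2$, and the Riemann--Roch computation behind Proposition \ref{even} (adapted to rank $2$, with $\chi(N)=4k$ from Proposition \ref{class}) gives $\deg N=2k^2$. Consequently $\deg N(-2,0)=2k(k-2)$ and
\[
\chi\bigl(S,N(-2,0)\bigr)=2k(k-2)+2\bigl(1-(k-1)^2\bigr)=0,
\]
so $h^0(N(-2,0))=h^1(N(-2,0))$ and the complement of $U$ is the pullback under $x\mapsto[S_x,N_x(-2,0)]$ of a generalized theta divisor in the relative moduli of $\sigma$-invariant rank-$2$ sheaves of Euler characteristic $0$. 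Since $\sM_{k,m}$ is connected (as a deformation of the connected Euclidean moduli space $\sM_{k,\infty}$, and directly from Atiyah's construction \cite{A} for $2m\in\oN$), a proper closed analytic subset has empty interior, so density of $U$ reduces to $U\neq\emptyset$.

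Third, to produce a single point of $U$ I would degenerate to a cluster configuration of $k$ widely separated charge-$1$ hyperbolic monopoles. At the limit the spectral curve decomposes as a transverse union $S=S_1\cup\dots\cup S_k$ of smooth $(1,1)$-curves with $S_i\simeq\oP^1$; on each $S_i$ the restriction of $N$ is readable off from Nash's rank-$2$ bundle $\tilde E$ for a charge-$1$ monopole, where the corresponding vanishing becomes a one-line Riemann--Roch check (the genus is zero and the relevant twist has small negative degree). A normalization long exact sequence propagates the vanishing from the $S_i$ to the nodal $S$, and semicontinuity under a generic smoothing yields a smooth monopole in $U$.

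The principal obstacle lies in the transport of the vanishing through the normalization sequence: the restriction map at the $k^2$ intersection points of $\bigsqcup S_i\to S$ must be surjective on the relevant global sections, which needs a transversality argument tailored to the $\sigma$-real framing data. An alternative, purely algebraic route is to fix $S$ and move the real trivialization $s$ of $L^{2m+k}|_S$ within its $\oR^\times$-orbit; this acts on $N$ by a non-trivial family of degree-zero line bundle twists, and the positive-codimension statement one needs is furnished by the appendix's vanishing theorem for $\sF(p,q)$ with $p+q=-2$.
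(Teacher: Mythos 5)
Your outline---openness by semicontinuity, the $\sigma$-symmetry reducing the two vanishings to one, and the observation that the bad locus is a real-analytic subset of the connected manifold $\sM_{k,m}$ so that density follows from nonemptiness of $U$---coincides with the paper's strategy (the paper phrases the last step as the non-vanishing of the real-analytic function $\det(J_0J_1^{-1}-1)$). Your Riemann--Roch computation $\chi(S,N(-2,0))=0$ is also correct and is a useful sanity check. But the entire content of the theorem is the production of one point of $U$, and there your argument has a genuine gap, in fact two. First, the ``cluster of $k$ widely separated charge-$1$ monopoles'' is a limit configuration, and you do not show that the limiting nodal curve $S_1\cup\dots\cup S_k$ is the spectral curve of an actual point of $\sM_{k,m}$: the triviality of $L^{2m+k}|_S$ on the \emph{union} is a nontrivial constraint not implied by triviality of $L^{2m+1}$ on each $(1,1)$-component. (The paper instead uses the genuinely existing axially symmetric monopole of \cite{MNS}, whose spectral curve is $\prod(\eta-a_i\zeta)=0$ with $a_i^{2m+k}=a_j^{2m+k}$.) Second, and more seriously, the component-wise vanishing you propose to propagate is false: on a component $S_i\simeq\oP^1$ the sheaf $N(-2,0)|_{S_i}$ sits in an extension of $\sO_{S_i}(k-2,k)\simeq\sO_{\oP^1}(2k-2)$ by $\sO_{\oP^1}(-2)$, so $h^0(N(-2,0)|_{S_i})\geq 2k-2>0$ for $k\geq 2$; the twist does \emph{not} have ``small negative degree'' because the relevant line bundle is $\sO(k,k)$, not $\sO(1,1)$. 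The global vanishing on $S$ therefore comes entirely from the matching conditions at the $k(k-1)$ nodes, i.e.\ from exactly the ``transversality at the intersection points'' that you flag as the principal obstacle and leave unresolved. That obstacle \emph{is} the theorem: the paper resolves it by first refining Nash's argument to show $h^0(N(-2,0))\leq 1$ with any nonzero section mapping to a section of $\sO_S(k-2,k)$ that does not extend to $\oP^1\times\oP^1$, and then explicitly computing the connecting homomorphism $\lambda:H^0(\sO_S(k-2,k))\to H^1(\sO_S(-2,0))$ on the axially symmetric curve to show its kernel contains only extendable sections.

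Your fallback via the appendix is circular: Proposition \ref{p+q=-2} takes as hypothesis that $\sF$ is the characteristic sheaf of a \emph{pluricomplex} structure, i.e.\ it already assumes the vanishing of $H^\ast(\sF(-2,0))$ and $H^\ast(\sF(0,-2))$, and deduces the vanishing for the other twists with $p+q=-2$; it cannot be used to establish \eqref{vanish?} itself. Likewise, varying the real trivialisation of $L^{2m+k}|_S$ within its $\oR^\times$-orbit changes the lift $\hat S$ but acts on $N$ only through the $1$-parameter family of rescalings of the extension class in $0\to\sO_S\to N\to\sO_S(k,k)\to 0$, which does not change $h^0(N(-2,0))$, so no positive-codimension statement can be extracted that way.
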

\begin{remark} As we show in the Appendix, as soon as \eqref{vanish?} holds, it follows automatically that $N(p,q)$ has no cohomology for any $p,q\in \oZ$ with $p+q=-2$.
\end{remark}
\begin{proof} Let us describe the point at which Nash's proof of vanishing of $H^\ast(N(-1,-1))$ breaks down for $H^\ast(N(-2,0))$. Let $Q=\oP^1\times \oP^1$.
We recall that we have the following exact sequence
\begin{equation} 0\to \sO_S\to N\to \sO_S(k,k)\to 0,\label{e1}
\end{equation}
from which we obtain embeddings $H^0(N(-1,-1))\hookrightarrow H^0(\sO_S(k-1,k-1))$ and $H^0(N(-2,0))\hookrightarrow H^0(\sO_S(k-2,k))$. In addition, we have the exact sequence
\begin{equation} 0\to \sO_Q\to \sO_Q(k,k)\to \sO_S(k,k)\to 0,\label{e2} 
\end{equation}
from which it follows that $H^0(\sO_S(k-1,k-1))\simeq H^0(\sO_Q(k-1,k-1))$ and so every section of $H^0(N(-1,-1))$ can be represented by a polynomial in two variables. 
\par
If we tensor \eqref{e2} with $\sO(-2,0)$ instead, the long exact sequence yields:
\begin{equation*} 0\to H^0(\sO_Q(k-2,k))\to H^0(\sO_S(k-2,k))\to H^1(\sO_Q(-2,0))\to 0.\label{e3} 
\end{equation*}
This time, the $1$-dimensional space $H^1(\sO_Q(-2,0))$ is an obstruction to extending sections of $\sO_S(k-2,k)$ to $Q$. However, if $s\in H^0(N(-2,0))$ is such that its image under the embedding $H^0(N(-2,0))\hookrightarrow H^0(\sO_S(k-2,k))$ can be extended to $Q$, then the remainder of Nash's proof goes through for $s$ (with easy modifications), and we can conclude that  $s=0$.  Consequently, $h^0(N(-2,0))\leq 1$, and moreover a nonzero  section of $N(-2,0)$ maps to a section of $\sO_S(k-2,k)$, which does not extend to $Q$.
\par
We now recall, from Lemma \ref{-2,0}, that $H^0(S,N(-2,0))\neq 0$ is equivalent to the existence of a $v\in T_x\sM_{k,m}$ ($x$ denotes the hyperbolic monopole, the spectral curve of which is $S$) such that, say, $J_0v=J_1v$, i.e. to the vanishing of $\det(J_0J_1^{-1}-1)$ at $p$. Since this is a real-analytic function on a connected real-analytic manifold $\sM_{k,m}$, we shall have proved the theorem as soon as we show that $H^0(S,N(-2,0))=0$ at one point of $\sM_{k,m}$. For this, we consider an axially symmetric monopole, discussed in detail in section 7 of \cite{MNS}. Its spectral curve $S$ is the union of $k$ rational curves of bidegree $(1,1)$, given by the equation
\begin{equation} P(\zeta,\eta)= \prod_{i=1}^k (\eta-a_i\zeta)=0.\label{axi}\end{equation}
The condition $L^{2m+k}|_S\simeq \sO$ translates into $a_i^{2m+k}=a_j^{2m+k}$ for all $i,j$. 
For such a curve, we are  going to identify the connecting homomorphism $\lambda:H^0(\sO_S(k-2,k))\to H^1(\sO_S(-2,0))$ corresponding to \eqref{e1} tensored by $\sO(-2,0)$ and show that any section of $\sO_S(k-2,k)$, which does not come from $\sO_Q(k-2,k)$, cannot belong to $\Ker\lambda$. Owing to the above discussion, this will prove the theorem.
\par
We need to compute the extension class of $N$ in \eqref{e1} for the curve \eqref{axi}. According to Nash \cite[Lemma 2.2]{Nash}, it is given  by $\delta(\phi)|_S\in H^1(S,L^{2m+k}(-k,-k))$, where $\phi$ trivialises $L^{2m+k}|S$ and $\delta$ is the connecting homomorphism associated to the following sequence of sheaves on $Q$
\begin{equation*} 0\to L^{2m+k}(-k,-k)\to L^{2m+k}\to L^{2m+k}|_S\to 0.\end{equation*}
The bundle $L$ can be trivialised on the open cover $U_0=\{\zeta,\eta\neq\infty\}$, $U_1=\{\zeta,\eta\neq 0\}$ of $Q\backslash \ol\Delta$ with transition function $\eta/\zeta$. Let $\beta=a_i^{2m+k}$, $i=1,\dots,k$. A section $\phi$ of $L^{2m+k}|_S$ can be then written as
$$  \phi(\zeta,\eta)\equiv 1\enskip\text{if $(\zeta,\eta)\in U_0\cap S$}, \quad \phi(\zeta,\eta)\equiv \beta\enskip\text{if $(\zeta,\eta)\in U_1\cap S$}.$$
The \v{C}ech representative of $\delta(\phi)$ on $U_0\cap U_1$  is then (see the proof of Lemma \ref{homol} for justification): 
$$ \delta(\phi)=\frac{1-(\zeta/\eta)^{2m+k}\beta}{P(\zeta,\eta)}.
$$
We can rewrite this as
$$\left(\frac{\zeta}{\eta}\right)^{2k+m}\frac{(\eta/\zeta)^{2k+m}-a_i^{2m+k}}{\zeta^k\prod_{j=1}^k(\eta/\zeta-a_j)}= \frac{1}{\zeta^k} \left(\frac{\zeta}{\eta}\right)^{2k+m}
\frac{\sum_{r=0}^{2m+k-1}(\eta/\zeta)^r a_i^{2m+k-1-r}}{\prod_{j\neq i}^k(\eta/\zeta-a_j)},
$$
and, hence, the restriction $\delta_i$ of $\delta(\phi)|_S$ to the component $S_i=\{(\zeta,\eta);\eta=a_i\zeta\}$ is
$$ \delta_i=\delta(\phi)|_{S_i}=
\frac{1}{\zeta^k}\frac{2m+k}{a_i\prod_{j\neq i}(a_i-a_j)}.$$
 The connecting homomorphism $\lambda:H^0(\sO_S(k-2,k))\to H^1(\sO_S(-2,0))$ is now given on each $S_i$ by the multiplication by $\delta_i$. We identify sections of $\sO_S(k-2,k)$ with sections of $\sO_S(k-1,k)$ which vanish on the divisor $D_0=\{\zeta=0\}$.
All sections of $\sO_S(k-1,k)$ extend to $Q$ (by tensoring \eqref{e2} with $\sO(-1,0)$ and taking the long exact cohomology sequence), and therefore they can be written as polynomials of bidegree $(k-1,k)$. The sections vanishing on $D_0$ are polynomials of the form
$$ b\eta^k+\zeta\left(\sum_{r=0}^{k-2}\sum_{s=0}^k p_{rs}\zeta^r\eta^s\right).$$
We can, therefore, write any section of $\sO_S(k-2,k)$ as
\begin{equation} b\frac{\eta^k}{\zeta}+\sum_{r=0}^{k-2}\sum_{s=0}^k p_{rs}\zeta^r\eta^s.\label{section}\end{equation}
Observe that sections extending to $Q$ are those with $b=0$.  Multiplying the section \eqref{section} by $\delta_i$ and restricting to $S_i$ produces a Laurent polynomial, the $1/\zeta$-term of which is
$$ \frac{1}{\zeta}  \frac{2m+k}{a_i\prod_{j\neq i}(a_i-a_j)}\left(ba_i^k+\sum_{r+s=k-1}p_{rs}a_i^s\right).
$$
This term must vanish if the corresponding class in $H^1(S_i,\sO(-2,0))$ is to be zero. Therefore
$$ ba_i^k+\sum_{r+s=k-1}p_{rs}a_i^s=0, \enskip \forall i=1,\dots,k,
$$
and, hence,
$$ bz^k+\sum_{r+s=k-1}p_{rs}z^s=b\prod_{i=1}^k(z-a_i).
$$ 
However, $r\leq k-2$, and so the constant term on the left-hand side is zero. As the $a_i$ are non-zero, this implies that $b=0$, and we finally conclude that any section in $\Ker\lambda$  does extend to $Q$.
\end{proof}

\begin{remark} We expect of course that \eqref{vanish?} holds for every hyperbolic monopole, and so, that the $O(-1)$-structure on $\sM_{k,m}$ is pluricomplex everywhere. As observed in the course of the proof, Nash's results imply already that $h^0(S,N(-2,0))\leq 1$ and that a nonzero section of $N(-2,0)$ cannot belong to the kernel of the composition of natural maps
$$ H^0(S,N(-2,0))\to H^0(S,\sO(k-2,k))\to H^1(Q,\sO(-2,0)).$$
Perhaps a more sophisticated variation of our argument would show this to be impossible. 
\end{remark}

\begin{remark}  The construction of a natural metric, given in \S\ref{plurimflds}, works for $\sM_{k,m}$. The $O(2)$-valued 2-form $\Omega$ has been defined by Nash \cite[Lemma 3.5]{Nash}. Whether the resulting $(2,0)$-tensor $g$ is positive-definite and, if so, what is its   physical significance, remains to be investigated. 
\end{remark}

\section{Pluricomplex geometry of hyperbolic monopoles: description\label{hyperbolic2}}

We shall now describe the pluricomplex or $O(-1)$-structure on $\sM_{k,m}$ ($m\in \frac{1}{2}\oN$) from several different points of view: via rational maps and scattering of monopoles, as acting on solutions to linearised field equations, and by viewing hyperbolic monopoles as invariant instantons. Finally, we shall discuss in this context Hitchin's self-dual Einstein metrics  \cite{Hit}.

\subsection{Scattering and rational maps\label{scatter}} Let $K:\oP^1\to \sJ(\sM_{k,m})$ be the natural pluricomplex structure on the moduli space of hyperbolic monopoles defined in the previous section. 
The proof of Theorem \ref{twistor} shows that the complex structure $-K(\zeta_0)$ is obtained via identification of $\sM_{k,m}$ with the space $\Rat_k(\oP^1)$ of based rational maps defined as follows.  If the spectral curve $S$ of an  $x\in \sM_{k,m}$ is given by an equation
\begin{equation*} \sum_{i,j=0}^k a_{ij}\zeta^i\eta^j=0\label{Seq}\end{equation*}
in $\oP^1\times \oP^1$, then the poles of the rational map $p(\eta)/q(\eta)=\Phi_{\zeta_0}(x)$  are given by the roots $\eta_1,\dots,\eta_k$ of the polynomial
\begin{equation*} \sum_{i,j=0}^k a_{ij}\zeta_0^i\eta^j=0,
\end{equation*}
while the values of the numerator $p(\eta)$ at $\eta_1,\dots,\eta_k$ are given by the values of the section of $L^{2k+m}$ corresponding to the monopole $x$ (in a fixed trivialisation).
\par
On the other hand, we know from \cite{A}, that this identification $\Phi_\zeta:\sM_{k,m}\to \Rat_k(\oP^1)$ is obtained from solutions of the scattering equation
\begin{equation} (\D^A_t-i\Phi)s=0\label{scatt}\end{equation}
along geodesics beginning at a point $\zeta$ of the ideal boundary $S^2\simeq \oP^1$. Consider \eqref{scatt} along the geodesic beginning at $\zeta$ and ending at $-1/\bar{\eta}$. The space of solutions to  \eqref{scatt} is $2$-dimensional and a basis $s_1(t),s_2(t)$ can be chosen by requiring that $\lim_{t\to+\infty}e^{mt}s_1(t)=1$, $\lim_{t\to+\infty}e^{-mt}s_2(t)=1$, $s_2(t)\perp s_1(t)$. A solution $r(t)$ to \eqref{scatt}, which decays as $t\to -\infty$, is a linear combination of $s_1(t)$ and $s_2(t)$: $r(t)=as_1(t)+bs_2(t)$. Then the rational function $\Phi_\zeta(x)$ corresponding to $x$ has value $a/b$ at $\eta\in \oP^1-\{-1/\bar\zeta\}$.

\begin{remark} Both descriptions of $K:\oP^1\to \sJ(\sM_{k,m})$ show that the action on $\sM_{k,m}$ of the isometry group $PSL(2,\cx)$ of $H^3$ is compatible with the natural action of $PSL(2,\cx)$ on $\oP^1$, i.e. $g.K(\zeta)|_x=K(g(\zeta))|_{g(x)}$.\label{intertwine}
\end{remark}

\subsection{Pluricomplex structure in terms of fields} We can identify the pluricomplex structure of $\sM_{k,m}$ directly in terms of solutions to Bogomolny equations. Our description is rather informal, and we leave aside the interesting question which Hilbert or Banach spaces are needed. In principle (modulo delicate analytical issues), this description works for arbitrary mass $m$. It also remains valid for $SU(N)$-monopoles ($N>2$), so that the moduli space of $SU(N)$-monopoles should also be  pluricomplex manifold.
\par
Let $(A,\Phi)$ be a hyperbolic monopole. We describe the tangent space to $\sM_{k,m}$ at $(A,\Phi)$.  Linearising Bogomolny equations gives the following equation
\begin{equation} d_A a=\ast(d_A\phi +[a,\Phi]), \label{lin1}
\end{equation} 
for an $\su(2)$-valued $1$-form $a$ and $\su(2)$-valued function $\phi$ on $H^3$. The tangent space $T_{(A,\Phi)}\sM_{k,m}$ is identified with solutions to \eqref{lin1} modulo infinitesimal gauge transformations (i.e. solutions to \eqref{lin1} generated by gauge transformations equal to $1$ on the ideal boundary). The previous subsection shows that the pluricomplex structure is induced by that on $H^3\times S^1$, which in turn arises as in Example \ref{basic}. If we fix a point $\zeta$ on the ideal boundary of $H^3$ and identify $H^3$ with the upper-half space, then the metric on $H^3\times S^1$ becomes
$$ e^{2t}(dx^2+dy^2)+dt^2+\nu^2,$$
where $\nu={\rm d}\ln\lambda$ is an invariant $1$-form on $S^1$. As in  Example \ref{basic}, the complex structure $J_\zeta$ is given by 
\begin{equation} J_\zeta(dx)=-dy,\quad J_\zeta(dt)=-\nu.\label{jzero}\end{equation}
In coordinates $x,y,t$, the Bogomolny equations become:
$$ \D^A_x\Phi=\left[\D^A_y,\D^A_t\right],\quad \D^A_y\Phi=\left[\D^A_t,\D^A_x\right],\quad \D^A_t\Phi=e^{-2t}\left[\D^A_x,\D^A_y\right].
$$
Let us write $a=a_1dx + a_2 dy+a_3 dt$. The linearisation of Bogomolny equations is:
\begin{equation}\D_x\phi+[A_1,\phi]+[a_1,\Phi]=\D_ya_3-\D_ta_2+[A_2,a_3]+[a_2,A_3],\label{B1}
\end{equation}
\begin{equation}\D_y\phi+[A_2,\phi]+[a_2,\Phi]=\D_ta_1-\D_xa_3+[A_3,a_1]+[a_3,A_1],\label{B2}
\end{equation}
\begin{equation} \D_t\phi+[A_3,\phi]+[a_3,\Phi]=e^{-2t}\left(\D_xa_2-\D_ya_1+[A_1,a_2]+[a_1,A_2]\right).\label{B3}
\end{equation}
We now consider the almost complex structure \eqref{jzero} acting on the space of all $S^1$-invariant $\su(2)$-valued $1$-form $a+\phi\nu$ on $H^3\times S^1$. It is easy to observe that \eqref{B1} and \eqref{B2} are exchanged, while \eqref{B3} becomes:
\begin{equation}e^{-2t}\left(\D_xa_1+\D_y a_2+[A_1,a_1]+[A_2,a_2]+[\Phi,\phi]\right)+\D_ta_3+[A_3,a_3]=0.\label{B5}
\end{equation}
\begin{remark} The reader may be interested in comparing this with the condition of orthogonality (with respect to the $L^2$-metric) to compactly supported gauge transformations: 
\begin{equation} \ast d_A\ast a +[\Phi,\phi]=0, \label{lin2}\end{equation}
which in these coordinates becomes 
\begin{equation}e^{-2t}\left(\D_xa_1+\D_y a_2+[A_1,a_1]+[A_2,a_2]+[\Phi,\phi]\right)+\D_ta_3+[A_3,a_3]+2a_3=0.\label{B4}\end{equation}
\end{remark}
We do not know whether the space of solutions to equations \eqref{B1}-\eqref{B5} is isomorphic to $T_{(A,\Phi)}\sM_{k,m}$, i.e. whether there exist nontrivial solutions to \eqref{B5} generated by gauge transformations equal to $1$ on the ideal boundary. Nevertheless, we can describe $J_\zeta$ acting on $T_{(A,\Phi)}\sM_{k,m}$ as follows.
\par
Let us write $X_{(A,\Phi)}$ for the space of solutions to \eqref{B1}-\eqref{B3}, and let $\sL\subset X_{(A,\Phi)}$ be those generated by gauge transformations equal to $1$ on the ideal boundary (we stress once again that this needs to be formalised by specifying the relevant Hilbert or Banach spaces). Similarly, let $Y_{(A,\Phi)}$ for the space of solutions to \eqref{B1}-\eqref{B3} and \eqref{B5} and $\sK=\sL\cap Y_{(A,\Phi)}$. Then $T_{(A,\Phi)}\sM_{k,m}\simeq X_{(A,\Phi)}/\sL\simeq Y_{(A,\Phi)}/\sK$. Take now an equivalence class of $(a,\phi)$ in $ Y_{(A,\Phi)}/\sK$; identify it with an $S^1$-invariant $\su(2)$-valued $1$-form $a+\phi\nu$ on $H^3\times S^1$; act on it via \eqref{jzero} and map the result to $X_{(A,\Phi)}/\sL$. This is $J_\zeta$.

\subsection{Hyperbolic monopoles and instantons on $S^4$ \label{instantons}}

One can view  hyperbolic monopoles as invariant instantons on $\oR^4$, or equivalently, invariant instantons on $S^4$, framed at $\infty$. This is the point of view adopted, for example, by Atiyah \cite{A} and Braam and Austin \cite{BA}. Framing an instanton at a point of $S^4$ immediately picks a point on the ideal boundary of $H^3$ and so it is not compatible with the full pluricomplex structure. In fact, as discussed in section \ref{scatter}, Atiyah \cite{A} describes one of the complex structures: precisely the one corresponding to the chosen point on the ideal boundary of $H^3$.
\par
We are going to show that one can see the full pluricomplex structure, provided one gives a different framing to homogeneous instantons.

We begin with the Penrose fibration:
\begin{equation} \pi:\oC\oP^3\to \oH\oP^1\simeq S^4,\quad \cx(z_0,z_1,z_2,z_3)\mapsto \oH(z_0+z_1j,z_2+z_3j).\label{Penrose}\end{equation}
The fibres of $\pi$ are ``real" lines in $\cx\oP^3$, i.e. $\oP^1$-s invariant under the involution
\begin{equation}\sigma(z_0,z_1,z_2,z_3)=(-\bar z_1,\bar z_0, -\bar z_3,\bar z_2)\label{sigma3}\end{equation}
The preferred point $\infty$ corresponds to the line $l_\infty=\{[z_0,z_1,0,0]\}$. The hypercomplex structure of $\oR^4$ can be now seen as follows. Choose a $P\simeq \cx\oP^2$ containing $l_\infty$ (there are $\oP^1$ of such planes). Any real line different from $l_\infty$ meets $P$ in a unique point and the restriction of $\pi$ to $P- l_\infty$ is an isomorphism $\cx^2\simeq \oR^4$. The group $PGL(2,\oH)$ is exactly the subgroup of $PSL(4,\cx)$ commuting with the involution $\sigma$.

 We now recall the Atiyah-Ward twistor description  of $SU(2)$ instantons (anti-self-dual connections) of charge $k$ on $S^4=\oR^4\cup \{\infty\}$, framed at $\infty$.. 
Atiyah and Ward \cite{AW} have shown that a framed instanton on $S^4$ corresponds to a holomorphic vector bundle $E$ on $\oP^3$, framed along $l_\infty$, trivial on real lines and equipped with a quaternionic structure covering $\sigma$. An instanton has charge $k$ if $c_2(E)=k$. As shown by Donaldson \cite{Don}, restricting to $P\simeq \oC\oP^2$, chosen as above, identifies the moduli space of framed instantons with the moduli space  of holomorphic vector bundles of rank $2$ on $\oC\oP^2$, framed along a line, and satisfying $c_1(E)=0$, $c_2(E)=k$. This exhibits, in particular, the hypercomplex structure on the moduli space of framed instantons.
\par
Let us now describe the pluricomplex structure of $H^3\times S^1$ using the Penrose fibration. We  consider the $S^1$-action on $S^4$, which comes from right multiplication by $e^{\frac{1}{2}i\theta}$ on $\oH^2$. It is clear from \eqref{Penrose} that this action lifts to an action of $\oC^\ast$ on $\oC\oP^3$ defined by:
\begin{equation}\lambda.[z_0,z_1,z_2,z_3]\mapsto \left[\lambda^{1/2}z_0,\lambda^{-1/2}z_1,\lambda^{1/2}z_2,\lambda^{-1/2}z_3\right].\label{action3}\end{equation}
It is the same action as considered by Atiyah in \cite{A}. Its fixed-point set consists of two $\oP^1$-s: $\oP^1_-=\{z_0=z_2=0\}$ and $\oP^1_+=\{z_1=z_3=0\}$. The centraliser of $S^1$ in $PGL(2,\oH)$ is $PGL(2,\cx)$ acting via
\begin{equation} A(v,w)=(Av,\bar Aw), \enskip\text{where $v=(z_0,z_2)$, $w=(z_1,z_3)$}.\label{PGL2}\end{equation}
We now interpret the pluricomplex structure of $H^3\times S^1\simeq \oR^4-\oR^2$ similarly to the above interpretation of the hypercomplex structure on $\oR^4$, i.e. by restricting to planes. This time we choose $S^1$-invariant planes $\oC\oP^2\subset \oC\oP^3$. These are those that contain either $\oP^1_+$ or $\oP^1_-$ and so they  are parameterised by $\oP^1\cup\oP^1\subset (\cx\oP^3)^\ast$. Let $P=P(p)$ be a plane containing $\oP^1_-$, and passing through the point $p\in \oP^1_+$. In other words:
\begin{equation} P=\{[wa,z_1,wb,z_3]; \enskip w,z_1,z_3\in \oC\},\quad p=[a,0,b,0]\in \oP^1_+.
\end{equation}
 Then $P$ contains a unique real $\oP^1$, $l_p=\{wa,z\bar a,wb,z\bar b\}$, which is mapped via \eqref{Penrose} to the point $\pi(p)$ lying in the fixed-point set of $S^1$. The restriction of \eqref{Penrose} to $P-l_p$ provides another isomorphism $\bar{\phi}_p:\oC^2\simeq S^4-\pi(p)\simeq \oR^4$, which depends on the choice of point $p$ (or $\pi(p)$). Each of these maps $\oP^1_-\subset P$ to $\oR^2=\bigl(\oR^4\bigr)^{S^1}$ and restricting $\bar{\phi}_p$ to $P-(l_p\cup \oP^1_-)$ gives us an isomorphism
$$\phi_p: \oR^4-\oR^2\simeq \cx \times\cx^\ast,$$
which is the same as in Example \ref{basic} (after passing from $\oR$ to $S^1\simeq \oR/\oZ$). In other words, the isomorphisms $\phi_p$ determine the canonical pluricomplex structure on $\oR^4-\oR^2\simeq H^3\times S^1$. 

\medskip

We now return to instantons. Let $E$ be a framed instanton bundle on $\oC\oP^3$, i.e. $E$ is trivialised along $l_\infty$, it is trivial on each real line and it has a quaternionic structure covering $\sigma$. If we want to induce a pluricomplex structure on the moduli space of such bundles, we would need to restrict them to $S^1$-invariant planes in $\cx\oP^3$. This, however, is not compatible with the framing:  the plane $P(p)$, $p\in \oP^1_+$, contains $l_\infty$ only if $p=[1,0,0,0]$ and otherwise  $P(p)\cap l_\infty=[0,1,0,0]$. This point does not lie on $l_p$ so there is no natural framing of $E|_{P(p)}$ along $l_p$. Therefore the canonical pluricomplex structure of $H^3\times S^1$ does not induce a pluricomplex structure on the moduli space of instantons {\em framed at a point}.
\par
Things can be made to work, however, if we restrict ourselves to $S^1$-invariant instantons, i.e. to homogeneous instanton bundles on $\oP^3$.
\par
Let $E$ be an instanton bundle on $\cx \oP^3$ invariant under action \eqref{action3}. Then, as a homogeneous bundle,
\begin{equation} E|_{\oP^1_+}\simeq O(-k)\otimes L^m\oplus O(k)\otimes L^{-m},\label{homogen}\end{equation} 
where $L$ is the standard $1$-dimensional representation of $\oC^\ast$ (cf. \cite{A}). Let $\sM(k,m)$ be the space of isomorphism classes of:\begin{center}{\em homogeneous instanton bundle $E$ $+$ an isomorphism \eqref{homogen}}.\end{center} 
Consider again any $S^1$-invariant plane $P(p)$ in $\cx\oP^3$. The isomorphism \eqref{homogen} gives a trivialisation of $E_p$, and since $E$ is trivial along $l(p)$, this trivialisation extends to a framing $E|_{P(p)}$ along $l_p$. Thus we obtain an identification of $\sM(k,m)$ with the space of {\em homogeneous} holomorphic vector bundles of rank $2$ on $\oC\oP^2$, framed along a line, and satisfying $c_1(E)=0$, $c_2(E)=k$. This gives a $2$-sphere of complex structures on $\sM(k,m)$. These complex structures can be identified as in \cite{A}: for each $p\in \oP^1_+$, we obtain a biholomorphism $\sM({k,m})\simeq \Rat_k(\oP^1)$. These agree with biholomorphisms described in section \ref{scatter}, and so $\sM({k,m})\simeq\sM_{k,m}$ as pluricomplex manifolds.


\subsection{Charge $2$ monopoles\label{quat}} We may ask how the self-dual Einstein metrics of Hitchin \cite{Hit}, defined on the moduli space of {\em centred} framed monopoles of charge $2$ fit into this scheme. Certainly, manifolds with pluricomplex structure of degree $2$ admit, under mild conditions, a canonical quaternionic structure. To begin, we recall \cite{MNS} that the line bundle $\sO(1,-1)$ on $\oP^1\times \oP^1-\ol\Delta$ can be raised to an arbitrary complex power and, consequently, we have a line bundle $\sO(\lambda,\mu)$ on $\oP^1\times \oP^1-\ol\Delta$ for every $\lambda,\mu\in \cx$ such that $\lambda+\mu\in \oZ$. Moreover, $\sO(\lambda,\mu)$ extends to $\oP^1\times \oP^1$ if and only if both $\lambda$ and $\mu$ are integers. If, now, the characteristic curve $S$ (which is elliptic if $k=2$)  of the pluricomplex structure on $V=T_xM$ satisfies 
\begin{equation} \sO_S(1/2,-1/2)\neq \sO_S,\label{assumpt}\end{equation}
then $h^0\bigl(S, \sO_S(1/2,1/2)\bigr)=2$, and, consequently, we have the following exact sequence on $S$:
$$ 0\to \sF(-1,-1)\to \sF(-1/2,-1/2)\otimes H^0(S,\sO(1/2,1/2))\to \sF\to 0,$$
and taking the long exact sequence shows (as $\sF(-1,-1)$ has no cohomology) that there is a canonical isomorphism
\begin{equation} H^0\bigl(S,\sF\bigr)\simeq H^0\bigl(S,\sF(-1/2,-1/2)\bigr)\otimes H^0(S,\sO_S(1/2,1/2))\label{qq}\end{equation}
compatible with real structures. We observe that $\sO_S(1/2,1/2)$ is not a $\sigma$-sheaf. Instead, there is an isomorphism $\phi:\sO_S(1/2,1/2) \stackrel{\sim}{\rightarrow} \sigma^\ast \sO_S(1/2,1/2)$ such that $\sigma^\ast(\phi)\phi=-1$. It induces a quaternionic structure on $ H^0(S,\sO(1/2,1/2))$ i.e., an antiholomorphic endomorphism with square $-1$.
\par
Therefore, both factors on the right-hand side of \eqref{qq} have a quaternionic structure, the tensor product of which is the real structure of $H^0\bigl(S,\sF\bigr)$. Thus, we obtain a canonical decomposition $V^\cx\simeq \cx^{2r}\otimes \cx^2$ and, when we vary $x$, an almost quaternionic structure on $M$ (this structure cannot be almost hypercomplex: by \cite[Lemma 9.1]{MNS}, there are no global sections of $\sO(1/2,1/2)$  on $\oP^1\times \oP^1-\ol\Delta$).
\par
The assumption \eqref{assumpt} is automatically satisfied at every point of the moduli space $\sM_{2,m}$ of hyperbolic monopoles of charge $2$ and half-integer mass $m$. 
Indeed, a non-zero section of $\sO_S(1/2,-1/2)$ would give a nonzero section of $\sO_S(1,-1)$, which is impossible by \cite[Theorem 7]{MNS}. Thus,  $\sM_{2,m}$ has a natural almost quaternionic structure.
\par
Whether this almost quaternionic structure is related to Hitchin's metrics remains to be investigated.

\section{Monopoles with mass zero \label{zero-mass}} The twistor description of hyperbolic monopoles generalises to the case $m=0$, i.e. to massless hyperbolic monopoles. These objects are related to solutions of the Yang-Baxter equation \cite{A2,AM2} and can be viewed as limits of hyperbolic monopoles when $m\to 0$ \cite{JN}.
\par
From the twistor point of view they correspond to $\sigma$-invariant curves $S$ of bidegree $(k,k)$ in $\oP^1\times \oP^1$ such that $\sO_S(k,-k)\simeq \sO_S$. As for hyperbolic monopoles, we can consider a twistor space $Z$, which is the total space of the bundle $\sO(k,-k)$ on $\oP^1\times \oP^1-\ol\Delta$ (minus the zero section). The curve $S$ of a  monopole with mass zero can be lifted a curve $\hat{S}$ to $Z$, and we are going to show that the normal bundle $N=N_{\hat{S}/Z}$ satisfies $H^\ast(\hat{S},N(-1,-1))=0$. Therefore the manifold $\sM_{k,0}$ of massless monopoles admits a natural strongly integrable $O(-1)$-structure. On the other hand, we are also going to show that, if $k\geq 2$, then $H^\ast(\hat{S},N(-2,0))\neq 0$, so this $O(-1)$-structure is not pluricomplex. In fact it is as far from pluricomplex as possible: at each point the vector bundle $ T_x^\cx M/K(x)^\ast \sV^{1,0}$ splits as $\sO(k)\oplus \sO(k)\oplus \sO^{\oplus 2k-2}$.
\par
Thus, we have the following picture of the limiting behaviour of pluricomplex geometry on $\sM_{k,m}$: as $m\to \infty$, the pluricomplex structure becomes hypercomplex, i.e. the characteristic curves (which are the spectral curves) approach the diagonal in $\oP^1\times \oP^1$ (see \cite{BS2} for more on this limiting behaviour); on the other hand, as $m\to 0$, the pluricomplex structure degenerates to an $O(-1)$-structure, which is no longer pluricomplex.
\par
Nash's proof of vanishing of $H^\ast (N(-1,-1))$ does not apply to the case $m=0$, so we need to prove:
\begin{proposition} Let $Z$ be the total space of $\Lambda=\sO(k,-k)$ on $Q=\oP^1\times \oP^1$ without the zero section. Let $\hat S\subset  Z$ be a reduced curve such that the restriction to $\hat S$ of the natural projection $Z\to Q$ is a biholomorphism onto its image, which is a curve $S$ in the linear system $|\sO(k,k)|$. Then $H^\ast(\hat S,\sN_{\hat S/Z}(-1,-1))=0$.\end{proposition}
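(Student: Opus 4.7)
My plan is to compute $N = \sN_{\hat S/Z}$ via a short exact sequence, reduce the vanishing claim to showing a certain connecting homomorphism is an isomorphism between two $k^2$-dimensional spaces, and then attack that via an explicit Čech description.

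\medskip

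\noindent\textbf{Step 1: The normal bundle sequence.} I would factor the embedding $\hat S \hookrightarrow \text{tot}(\Lambda|_S) \hookrightarrow \text{tot}(\Lambda)$. Since $\pi|_{\hat S}$ is a biholomorphism, $\hat S$ is the image of a nowhere-vanishing section $\phi$ of $\Lambda|_S$, so $\Lambda|_S \cong \sO_S$ canonically. The normal bundle of $\hat S$ in $\text{tot}(\Lambda|_S)\setminus\{0\}$ is $\Lambda|_S$, while $\text{tot}(\Lambda|_S)\subset\text{tot}(\Lambda)$ has normal bundle pulled back from $\sN_{S/Q} = \sO_S(k,k)$. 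This produces
\begin{equation*}
0 \to \sO_S \to N \to \sO_S(k,k) \to 0. \tag{$\ast$}
\end{equation*}

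\medskip

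\noindent\textbf{Step 2: Reduction to a connecting map.} Twisting $(\ast)$ by $\sO(-1,-1)$ and using the defining sequence of $S$ in $Q$ together with Riemann-Roch on $S$ (which is Gorenstein, of arithmetic genus $(k-1)^2$, with $\omega_S=\sO_S(k-2,k-2)$), one readily computes $h^0(\sO_S(-1,-1))=0$, $h^1(\sO_S(-1,-1))=k^2$, and $h^0(\sO_S(k-1,k-1))=k^2$, $h^1(\sO_S(k-1,k-1))=0$; note these are Serre-dual. Hence $\chi(N(-1,-1))=0$ and the long exact sequence collapses to
\begin{equation*}
0\to H^0(N(-1,-1))\to H^0(\sO_S(k-1,k-1))\xrightarrow{\delta} H^1(\sO_S(-1,-1))\to H^1(N(-1,-1))\to 0,
\end{equation*}
so it is enough to prove that $\delta$ is an isomorphism between two $k^2$-dimensional spaces.

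\medskip

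\noindent\textbf{Step 3: Identifying $\delta$ and pairing it with polynomials.} The map $\delta$ is cup product with the extension class $e\in H^1(S,\sO_S(-k,-k))\cong\Ext^1(\sO_S(k,k),\sO_S)$ of $(\ast)$. To compute $e$, I would pick a cover $\{U_i\}$ of $Q$ trivialising $\Lambda$, extend $\phi$ from $S$ to local sections $\tilde\phi_i$ of $\Lambda$ on $U_i$, and observe that $\tilde\phi_i-\tilde\phi_j$ vanishes on $S$, so equals $f\cdot h_{ij}$ with $f\in H^0(\sO_Q(k,k))$ the defining polynomial of $S$; the cochain $(h_{ij}|_S)$ represents $e$ after using the trivialisation $\sO_S(0,-2k)\simeq\sO_S(-k,-k)$ induced by $\phi$. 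Sections of $\sO_S(k-1,k-1)$ extend uniquely to polynomials $p\in H^0(\sO_Q(k-1,k-1))$ (since $H^\ast(\sO_Q(-1,-1))=0$ in degrees $0,1$), and Serre duality identifies $H^1(\sO_S(-1,-1))$ with $H^0(\sO_S(k-1,k-1))^\vee$; under these identifications $\delta$ becomes the symmetric bilinear pairing
\begin{equation*}
(p,q)\longmapsto \Res_S\bigl(pq\cdot e\bigr)\in H^1(S,\omega_S)\cong\cx.
\end{equation*}
Non-degeneracy of this pairing is equivalent to $\delta$ being an isomorphism. This is the step to be proved; it amounts to showing that no non-zero polynomial $p$ of bidegree $(k-1,k-1)$ satisfies $p\cdot e=0$ in $H^1(\sO_S(-1,-1))$.

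\medskip

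\noindent\textbf{Main obstacle.} The hardest step is showing non-degeneracy in Step 3. The difficulty is precisely what Nash encountered: the representative of $e$ naturally lives in $H^1(Q,\sO(0,-2k))$ (of dimension $2k-1$), a much smaller space than the ambient $H^1(S,\sO_S(-k,-k))$, so a naive Čech manipulation with the two-variable polynomial ring does not immediately yield the contradiction. My strategy would parallel the explicit Čech calculation in the proof of Theorem \ref{final}: reduce (using real-analyticity and the openness of the condition $h^0=0$) to an explicit reducible test curve such as $S=\bigcup_{i=1}^k\{\eta=a_i\zeta\}$ with $a_i^k=a_j^k$ (the massless analogue of the axially symmetric curve), restrict $p\cdot e$ component-by-component to each rational component $S_i$, extract the $(1/\zeta)(1/\eta)$-coefficient (which is the only obstruction to triviality in $H^1(S_i,\sO(-1,-1))$), and read off a Vandermonde-type linear system in the $a_i$ forcing $p=0$. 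The fact that the proposition is purely algebraic (no reality is needed) should permit passing from this special curve to a general one by deforming in the universal family of pairs $(S,\phi)$ and using upper semicontinuity of $h^0(N(-1,-1))$ together with $\chi=0$.
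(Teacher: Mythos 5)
Your Steps 1 and 2 agree with the paper: the same extension $0\to\Lambda|_S\to N\to\sO_S(k,k)\to 0$ appears there, and the problem is reduced in the same way to showing that the connecting homomorphism $\delta\colon H^0(S,\sO_S(k-1,k-1))\to H^1(S,\Lambda|_S(-1,-1))$ between two $k^2$-dimensional spaces is an isomorphism. The gap is in Step 3: the non-degeneracy there is the entire content of the proposition, and you do not prove it --- you only sketch a strategy. Moreover, the sketched strategy cannot deliver the statement as claimed. Upper semicontinuity of $h^0(N(-1,-1))$ over the family of pairs $(S,\phi)$, combined with a computation at one special reducible curve, propagates vanishing only to an \emph{open} neighbourhood, hence at best to an open dense subset of the connected family --- exactly the weaker conclusion the paper is forced to accept in Theorem \ref{final} for positive mass. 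The present proposition is asserted for \emph{every} curve $\hat S$ satisfying the hypotheses, and is needed in that generality to put an $O(-1)$-structure on all of $\sM_{k,0}$.

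What you are missing is that the case $\Lambda=\sO(k,-k)$ is structurally \emph{easier} than the positive-mass case, not harder. The paper proves a Leibniz rule for connecting homomorphisms (Lemma \ref{homol}); applied with $L=\Lambda$, $P=\sO(k-1,k-1)$, $M=\sO(k,k)$, and using $H^\ast(Q,\sO(-1,-1))=0$ so that $\delta_P=0$, it identifies $\delta(t)=t\cdot e$ with $\delta_{LP}(\phi t)|_S$, i.e.\ with the composition
$$ H^0(S,\sO_S(k-1,k-1))\xrightarrow{\;\cdot\phi\;} H^0\bigl(S,\Lambda|_S(k-1,k-1)\bigr)\xrightarrow{\;\delta_{LP}\;} H^1\bigl(Q,\Lambda(-1,-1)\bigr)\xrightarrow{\;|_S\;} H^1\bigl(S,\Lambda|_S(-1,-1)\bigr). $$
The first arrow is an isomorphism because $\phi$ is nowhere vanishing; the second and third are isomorphisms because $\Lambda(k-1,k-1)=\sO(2k-1,-1)$ and $\Lambda(-k-1,-k-1)=\sO(-1,-2k-1)$ have no cohomology at all on $Q$ (one $\oP^1$-factor carries $\sO(-1)$). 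So $\delta$ is an isomorphism for every such curve, with no genericity, degeneration or explicit \v{C}ech computation. The feature you flagged as the ``main obstacle'' --- that the extension class comes from the small space $H^1(Q,\sO(0,-2k))$ --- is in fact what makes the argument close, once combined with the Leibniz rule.
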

\begin{proof} Let $\phi$ be the section of $\Lambda|_S$ so that  $\hat S=\phi(S)$, and 
let us write $N$ for the pullback of $\sN_{\hat S/Z}$ to $S$. We have an exact sequence 
\begin{equation} 0\to \Lambda|_S\to N\to \sO_S(k,k)\to 0.\label{E0}\end{equation} 
In \cite[Lemma 2.2]{Nash}, Nash identifies the extension class of $N$ as $\delta(\phi)|_S\in H^1(S,\Lambda(-k,-k))$, where $\delta$ is the connecting homomorphism associated to the following sequence of sheaves on $Q$
\begin{equation*} 0\to \Lambda(-k,-k)\to \Lambda\to \Lambda|_S\to 0.\label{E1}\end{equation*}
If we tensor \eqref{E0} with $\sO(-1,-1)$, we obtain:
 \begin{equation} 0\to \Lambda|_S(-1,-1)\to N(-1,-1)\to \sO_S(k-1,k-1)\to 0,\label{E2}\end{equation} 
and, consequently, we aim to show that the connecting homomorphism $H^0(S,\sO(k-1,k-1))\to H^1(S,\Lambda(-1,-1))$ is an isomorphism. By the above lemma of Nash, this homomorphism is given by $t\mapsto t\cdot \delta(\phi)|_S$. We need a simple homological lemma:
\begin{lemma}
Let $X$ be a complex manifold and let  $L,M,P$ be line bundles on $X$. Let $S$ be a reduced effective divisor of $M$ and consider short exact sequences
\begin{equation*} 0\to LM^\ast\to L\to L|_S\to 0,\label{eq1}\end{equation*}
\begin{equation*} 0\to PM^\ast\to P\to P|_S\to 0,\label{eq2}\end{equation*}
\begin{equation*} 0\to LPM^\ast\to LP\to (LP)|_S\to 0.\label{eq3}\end{equation*}
Denote by 
 $\delta_L:H^0(S,L)\to H^1(X,LM^\ast)$, $\delta_P:H^0(S,P)\to H^1(X,PM^\ast)$, and  $\delta_{LP}:H^0(S,LP)\to H^1(X,LPM^\ast)$
the associated connecting homomorphisms. Then for any $s\in H^0(S,L)$ and $t\in H^0(S,P)$
$$ \delta_{LP}(st)|_S = s \cdot \delta_P(t)|_S+ t\cdot \delta_L(s)|_S.
$$\label{homol}
\end{lemma}
To prove this, let us first describe the connecting homomorphism $\delta:H^0(S,E)\to H^1(X,EM^\ast)$ associated to the short exact sequence 
$$ 0\to EM^\ast\to E\to E|_S\to 0,$$
for an arbitrary vector bundle $E$ on $X$. Let $\sU$ be a sufficiently fine Leray cover of $X$ and let the functions $f_i:U_i\to \cx$ cut out $S_i=S\cap U_i$ on each $U_i\in \sU$ (thus $f_i$ is a section of $M|_{U_i}$). Fix a trivialisation of $E$ on each $U_i$ and let $\psi_{ij}$ be the transition functions. Let $e\in H^0(S,E)$ and let $e_i$ represent $e$ on $S_i$ with respect to these trivialisations. Finally, let $\tilde e_i:U_i\to \cx$ be a holomorphic extension of $e_i$.
Then  $\delta (e)$ is represented on each $U_i\cap U_j$ by 
\begin{equation} \delta(e)_{ij}= \frac{\tilde e_i -\psi_{ij}\tilde e_j}{f_i}.\label{eq4}\end{equation}
Let us now trivialise $L$ and $P$ and let $l_{ij}$, $p_{ij}$ be the corresponding transition functions. Let  $s_i$ represents $s$ and $t_i$ represent $s$ and $t$, and $\tilde s_i, \tilde t_i$ be holomorphic extensions. Then, according to \eqref{eq4}:
$$ \delta_{LP}(st)_{ij}=\frac{\tilde s_i\tilde t_i -l_{ij}p_{ij}\tilde s_j\tilde t_j}{f_i}
=\tilde s_i \frac{\tilde t_i -p_{ij}\tilde t_j}{f_i}+ p_{ij}\tilde t_j\frac{\tilde s_i -l_{ij}\tilde s_j}{f_i},
$$
and the lemma follows by restricting to $S_i\cap S_j$.

\medskip

We return to the proof of the proposition, and apply the lemma with $X=Q$, $M=\sO(k,k)$, $L=\Lambda$, and $P=\sO(k-1,k-1)$. Since $PM^\ast=\sO(-1,-1)$ has no cohomology, $\delta_P=0$, and, so $\delta_{LP}(\phi t)|_S=t\cdot \delta_L(\phi)|_S$. The right-hand side is the connecting homomorphism of \eqref{E2}, and, therefore, we need to prove that $\delta_{LP}|_S$ is an isomorphism, i.e. that the connecting homomorphism $H^0(S,\Lambda(k-1,k-1))\to H^1(Q,\Lambda(-1,-1))$ followed by the restriction to $S$ is an isomorphism. We write the relevant short exact sequences as
\begin{equation} 0\to \Lambda(-1,-1)\to \Lambda(k-1,k-1)\to \Lambda|_S(k-1,k-1)\to 0,\label{E4}\end{equation}
\begin{equation*} 0\to \Lambda(-k-1,-k-1)\to \Lambda(-1,-1)\to \Lambda|_S(-1,-1)\to 0,\label{E5}\end{equation*}
and observe that, as $\Lambda=\sO(k,-k)$, the cohomology of both $\Lambda(k-1,k-1)$ and $\Lambda(-k-1,-k-1)$ vanish. Therefore, both the connecting homomorphism of \eqref{E4} and the restriction  $H^1(Q,\Lambda(-1,-1))\to H^1(S,\Lambda(-1,-1))$ are indeed isomorphisms.
\end{proof}

Thus, we can conclude, owing to Theorem \ref{twistor},  that $\sM_{k,0}$ has a natural strongly integrable $O(-1)$-structure. We are going to describe it explicitly and then show that it is nowhere pluricomplex (unless $k=1$).
\par
Let us recall, from \cite{A2,AM2}, a description of curves of bidegree $(k,k)$ satisfying the condition $\sO_S(k,-k)\simeq \sO_S$. Such a curve is determined by a pair of degree $k$ rational maps $f,g:\oP^1\to \oP^1$ and, as a set, $S$ is 
$$ S=\{(\zeta,\eta);\; f(\zeta)=g(\eta)\}.$$
If we write $f(\zeta)=p_1(\zeta)/q_1(\zeta)$, $g(\eta)=p_2(\eta)/q_2(\eta)$, where the $p_i$ and $q_j$ are degree $k$ polynomials, then a non-vanishing section of $\sO_S(k,-k)$ is given by $p_1(\zeta)/p_2(\eta)$ on $\{\zeta \neq \infty,\eta\neq \infty\}$. Curves, which are $\sigma$-invariant, are determined by pairs $(f,g)$ with $g(\eta)=\sigma f(\sigma(\eta))$. Such a curve automatically avoids the antidiagonal $\ol\Delta$ in $\oP^1\times \oP^1$. The manifold $\sM_{k,0}$ can be now identified with $\{${\em real curves $+$ real trivialisation of $L^{k}|_S$$\}$}, or, equivalently, with
pairs $(p,q)^T$ of coprime polynomials of degree $k$  modulo the action of $SU(2)$ on the vector $(p,q)^T$. The complexification $\sM^\cx$ of $\sM_{k,0}$, arising  as in Theorem \ref{twistor} (as a component of the Douady space $H_Z$) is now identified with the space of matrices
$$ A=\begin{pmatrix} p_1(\zeta) & p_2(\eta)\\ q_1(\zeta) & q_2(\eta)\end{pmatrix}$$
of polynomials of degree $k$, such that $(p_1,q_1)=(p_2,q_2)=1$, modulo left multiplication by $GL(2,\cx)$.
\par
As in the proof  of Theorem \ref{twistor}, for any $\zeta_0\in \oP^1$, we can identify the complex structure $J_{\zeta_0}$ by describing leaves of a complex foliation on $M^\cx$: for a curve $S$ and a section $\phi$ corresponding to a point of $\sM^\cx$, we need to identify all curves passing through $\phi(S)\cap \pi^{-1}(\{\zeta_0\}\times \oP^1)$, where $\pi:Z\to Q$ is the projection. Let $(S,\phi)$ correspond to a matrix $A$ of rational maps as above and write $f(\zeta)=p_1(\zeta)/q_1(\zeta)$, $g(\eta)=p_2(\eta)/q_2(\eta)$. Let $\eta_1,\dots,\eta_k$ be the points for which $g(\eta_i)=f(\zeta_0)$. We assume for convenience that they are distinct and that $f(\zeta_0)\neq 0,\infty$. Let $p_2(\eta_i)=\lambda_i p_1(\zeta_0)$, $i=1,\dots,k$. Then the leaf of the foliation passing through $(S,\phi)$ consists of all 
$$ Y=\begin{pmatrix} P_1(\zeta) & P_2(\eta)\\ Q_1(\zeta) & Q_2(\eta)\end{pmatrix}$$
such that $P_2(\eta_i)=\lambda_i P_1(\zeta_0)$ and $Q_2(\eta_i)=\lambda_i Q_1(\zeta_0)$, for $i=1,\dots,k$. Therefore 
$$P_2(\eta)=\frac{P_1(\zeta_0)}{p_1(\zeta_0)}p_2(\eta),\enskip 
Q_2(\eta)=\frac{Q_1(\zeta_0)}{q_1(\zeta_0)}q_2(\eta).$$
If $(S,\phi)$ is real, then the space $V^{1,0}_{\zeta_0}$ of vectors of type $(1,0)$ for $J_{\zeta_0}$ at this point is identified with the tangent space to this leaf. Thus, it follows that the leaf is determined by $(P,Q)=(P_1,Q_1)$ and, consequently,  
$$ H^0(S,N(-2,0))\simeq V^{1,0}_{\zeta_0}\cap V^{1,0}_{\zeta_1}\simeq \left\{(P,Q);\; \frac{P(\zeta_0)}{p_1(\zeta_0)}= \frac{P(\zeta_1)}{p_1(\zeta_1)},\; \frac{Q(\zeta_0)}{q_1(\zeta_0)}= \frac{Q(\zeta_1)}{q_1(\zeta_1)}\right\}/GL(2,\cx),$$
which has dimension $2k-2$. In fact, this argument shows that the vector bundle   $V^\cx/ K^\ast \sV^{1,0}$  on $\oP^1$ splits as
$$ \sO(k)\oplus\sO(k)\oplus \sO^{\oplus 2k-2}.$$

\appendix
\section{Strong regularity of characteristic sheaves}

\begin{proposition} Let $\sF$ be the characteristic sheaf of a  pluricomplex structure. Then
$$ h^0(\sF(p,q))=h^1(\sF(p,q))=0,$$
for every $p,q\in \oZ$ with $p+q=-2$.\label{p+q=-2}\end{proposition}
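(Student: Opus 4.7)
I would start by translating the statement, via the projection $\pi_1 \colon \oP^1 \times \oP^1 \to \oP^1$, into one about the pushforward $\sG_q := (\pi_1)_*\sF(0,q)$. Since $\pi_1|_S$ is finite, $\sG_q$ is a rank-$n$ vector bundle on $\oP^1$ with $\chi(\sG_q)=n(q+2)$, and the projection formula gives $H^i(\sF(p,q)) \simeq H^i(\oP^1, \sG_q(p))$. Consequently, the claim for all $p+q=-2$ is equivalent to $\sG_q$ being \emph{balanced}, i.e.\ $\sG_q \simeq \sO_{\oP^1}(q+1)^{\oplus n}$, for every $q \in \oZ$. The three vanishings given by Proposition~\ref{S} and Lemma~\ref{-2,0} yield this for $q \in \{-2,-1,0\}$. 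By the $\sigma$-symmetry $h^i(\sF(p,q))=h^i(\sF(q,p))$ it suffices to extend the balancedness to $q \geq 1$.

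The plan is induction on $q$, using the short exact sequences
\begin{equation*}
0 \longrightarrow \sG_q \longrightarrow \sG_{q+1} \longrightarrow T \longrightarrow 0
\end{equation*}
obtained by pushing $0 \to \sF(0,q) \to \sF(0,q+1) \to \sF|_{\hat D_{\eta_0}}(0,q+1) \to 0$ forward along $\pi_1$ for a generic $\eta_0$; here $T$ is a length-$n$ torsion sheaf on $\oP^1$, independent of $q$ up to isomorphism. Assuming inductively that $\sG_q$ is balanced, one must verify that the resulting extension class in $\Ext^1(T, \sO(q+1)^{\oplus n})$ lies in the open locus for which $\sG_{q+1}$ is also balanced; equivalently, that no subsheaf of $\sG_{q+1}$ of slope strictly greater than $q+2$ is introduced by the extension.

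The main obstacle is this verification. The exact-sequence data alone does not force balancedness to persist; one must invoke the pluricomplex content of $\sF$---the explicit matrix $M$ of Proposition~\ref{matrices} together with its $\sigma$-symmetric counterpart coming from $\pi_2$---to pin down the extension class. When $S$ is integral and smooth, $\sF(p,q)$ is a line bundle of degree $g-1$ on $S$, and the balancedness of $\sG_q$ is exactly the condition $\sF(p,q) \notin \Theta(S)$. The problem thus reduces to showing that the orbit $\{\sF(-1,-1)\otimes \sO_S(1,-1)^{m}\}_{m\in\oZ}$ in $\mathrm{Pic}^{g-1}(S)$ avoids $\Theta$. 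Since $\sO_S(1,-1)$ is $\sigma$-antiinvariant of degree zero, in the generic case where it is non-torsion the only $\sigma$-fixed orbit element is $\sF(-1,-1)$ itself, which by the $(-1,-1)$ vanishing avoids $\Theta$; the $\sigma$-induced symmetry $m \leftrightarrow -m$ pairs up the remaining orbit elements, and one expects to conclude using the structure of the $\sigma$-invariant part $\Theta^\sigma$ as the paper's remark suggests. The torsion case appears to require a further direct argument with the matrix $M$, and this is where I expect the main difficulty to lie.
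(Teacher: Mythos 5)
There is a genuine gap, and you have in fact located it yourself: everything up to ``the main obstacle is this verification'' is a correct but essentially cost-free reformulation (pushing forward along $\pi_1$, $\sG_q$ locally free of rank $n$ because $S$ meets no fibre of $\pi_1$, the projection formula, and the equivalence of the statement with balancedness of $\sG_q$), and the actual content of the proposition --- why balancedness persists for $q\geq 1$ --- is never established. The reduction to ``the orbit $\{\sF(-1,-1)\otimes\sO_S(1,-1)^{m}\}$ avoids $\Theta$'' is not a proof: the $\sigma$-induced pairing $m\leftrightarrow -m$ only tells you that the orbit elements come in conjugate pairs with equal $h^0$, and a conjugate pair can perfectly well lie on the ($\sigma$-invariant) theta divisor. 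Indeed the remark following the proposition in the paper exhibits $\sO_S(2,-2)$ on a generic elliptic curve as exactly such a failure once the $\sigma$-sheaf hypothesis is dropped, so any correct argument must use the real structure in a way that goes beyond this symmetry; your sketch never does. Moreover you explicitly defer the non-integral/torsion case to ``a further direct argument with the matrix $M$'', which is to say you defer the proof.

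For comparison, the paper's proof is precisely that direct matrix argument, and it handles all cases (no integrality or smoothness of $S$ is needed, and no theta-divisor geometry enters). One tensors the resolution \eqref{F} by $\sO(m-1,-m-1)$ for $m=p+1\geq 0$; since $H^0$ and $H^2$ of the two locally free terms vanish, the claim reduces to showing that
$$ \hat M\colon H^1\bigl(\sO(m-2,-m-1)\bigr)\otimes\cx^n\oplus H^1\bigl(\sO(m-1,-m-2)\bigr)\otimes\cx^n\longrightarrow H^1\bigl(\sO(m-1,-m-1)\bigr)\otimes\cx^{2n} $$
is an isomorphism, and since the dimensions agree it suffices to prove injectivity. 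Writing $M=\begin{pmatrix} X+\zeta Y & -1\\ \zeta & \eta\ol X-\ol Y\end{pmatrix}$ as in Remark \ref{M} and making the induced map on $H^1$ explicit via Serre duality, an element of $\Ker\hat M$ yields the recursion $v_{i+1}(\zeta)=(\ol X)^{-1}\bigl((X+\zeta Y)\ol Y-\zeta\bigr)v_i(\zeta)$; the invertibility of $X$ and of $Y\ol Y-I$ (both consequences of the reality condition in Proposition \ref{matrices}, i.e.\ exactly the substantive use of the $\sigma$-structure) force a degree growth incompatible with the bidegree bounds unless the kernel element is zero. If you want to salvage your approach, you would need to prove the extension-class (equivalently $\Theta$-avoidance) statement by some such explicit computation anyway, at which point the pushforward scaffolding buys you little.
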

\begin{remark} This can be rephrased as follows. {\em Let $\sG$ be a  $\sigma$-sheaf on $\oP^1\times \oP^1$, of 
 dimension $1$, and such that $\ol\Delta\cap \supp \sG=\emptyset$. Suppose that the cohomology of $\sG$, $\sG(-1,1)$ and $\sG(1,-1)$ all vanish. Then $H^\ast(\sG(n,-n))=0$ for every $n\in \oZ$.}

\medskip

As the example of $\sG=\sO_S(2,-2)$ on a generic elliptic curve $S$ in $\oP^1\times \oP^1$ shows, the assumption that $\sG$ is a $\sigma$-sheaf is essential.\end{remark}

\begin{proof}
Let $m=p+1$, so that $\sF(p,q)=\sF(m-1,-m-1)$. Due to the $\sigma$-invariance, it is enough to prove the result for $m\geq 0$. Also, we have already proved it (in Proposition \ref{S} and Lemma \ref{-2,0}) for $m=0,1$. Tensoring \eqref{F} with $\sO(m-1,-m-1)$ yields a short exact sequence
\begin{multline*} 0\to \sO(m-2,-m-1)\otimes \cx^n\oplus \sO(m-1,-m-2)\otimes \cx^n \stackrel{M}{\longrightarrow} \\
\stackrel{M}{\longrightarrow}\sO(m-1,-m-1)\otimes \cx^{2n}\to \sF(m-1,-m-1)\to 0,\end{multline*}
of sheaves on $\oP^1\times \oP^1$. We observe that $H^0$ and $H^2$ of the first two terms vanish, so that it is enough to prove that the map $M(\zeta,\eta)$ induces an isomorphism on first cohomology groups. Using K\"unneth theorem we identify the first cohomology groups as
$$ W_1=H^1(\oP^1\times \oP^1,\sO(m-2,-m-1))=H^0(\oP^1,\sO(m-2))\otimes H^1 (\oP^1,\sO(-m-1)),$$
$$  W_2=H^1(\oP^1\times \oP^1,\sO(m-1,-m-2))=H^0(\oP^1,\sO(m-1))\otimes H^1 (\oP^1,\sO(-m-2)),$$
$$  W_3=H^1(\oP^1\times \oP^1,\sO(m-1,-m-1))=H^0(\oP^1,\sO(m-1))\otimes H^1 (\oP^1,\sO(-m-1)).$$
Let us write $M$ as 
$$ M(\zeta,\eta)=\begin{pmatrix} A & B\\ C & D\end{pmatrix},$$
where $A$ and $C$ are of degree $(1,0)$, $B$ and $D$ of degree $(0,1)$. Then the induced map $\hat M$ on first cohomology groups
\begin{equation} \hat M: W_1\otimes \cx^n\oplus W_2\otimes \cx^n \to W_3\otimes \cx^{2n}\label{hat M}\end{equation}
can be written in the matrix form (writing first  $W_3\otimes \cx^{2n}\simeq W_3\otimes \cx^{n}\oplus W_3\otimes \cx^{n}$)
$$ \hat M= \begin{pmatrix} \hat A & \hat B\\ \hat C & \hat D\end{pmatrix},$$
where 
$$\hat A=A\otimes 1: \bigl(H^0(\oP^1,\sO(m-2))\otimes \cx^n\bigr)\otimes H^1 (\oP^1,\sO(-m-1))\to W_3\otimes \cx^n,$$
$$\hat B= 1\otimes B: H^0(\oP^1,\sO(m-1))\otimes \bigl(H^1 (\oP^1,\sO(-m-2))\otimes \cx^n\bigr)\to W_3\otimes \cx^n,$$
and similarly $\hat C=C\otimes 1$, $\hat D=1\otimes D$. We need to make explicit the map $H^1(\oP^1,\sO(-m-2))\otimes \cx^n \to H^1(\oP^1,\sO(-m-1))\otimes \cx^n$ induced by a matrix $R$ of polynomials of degree $1$. Using Serre duality, we want to express this as a map 
$$ \hat R:H^0(\oP^1,\sO(m))\otimes \cx^n\to H^0(\oP^1,\sO(m-1))\otimes \cx^n.$$ Let us write $R=R_0+\eta R_1$. Then $\hat R$ applied to $v=v_0+v_1\eta+\dots +v_m\eta^m\in H^0(\oP^1,\sO(m))\otimes \cx^n$
gives 
\begin{equation} \hat R(v)= R_0(v-v_m\eta^m)+\frac{1}{\eta}R_1(v-v_0).\label{R}\end{equation}
Writing $V_{p,q}$ for the vector space of polynomials of bidegree $(p,q)$, we can view $\hat M$ as a map from 
$$\hat M: V_{m-2,m-1}\otimes \cx^n \oplus V_{m-1,m}\otimes \cx^n\longrightarrow V_{m-1,m-1}\otimes \cx^n\oplus V_{m-1,m-1}\otimes \cx^n,$$  
which can be expressed in the matrix form \eqref{hat M}, where $\hat A,\hat C$ denote multiplication by the matrices $A,C$ of polynomials linear in $\zeta$ and  $\hat B,\hat D$ act on $v(\zeta,\eta)=v_0(\zeta)+v_1(\zeta)\eta+\dots +v_m(\zeta)\eta^m$ as in \eqref{R}. 
\par
We now assume that $M$ is as in Remark \ref{M}, so that $A=X+\zeta Y$, $B=-1$, $C=\zeta$, $D=-\ol Y+\eta \ol X$. Suppose that $(u,v)\in V_{m-2,m-1}\otimes \cx^n \oplus V_{m-1,m}\otimes \cx^n$ belongs to $\Ker\hat M$. From the above, this is equivalent to
$$ (X+\zeta Y)u-(v-v_m\eta^m)=0, \quad \zeta u -\bar Y(v-v_m\eta^m)+\frac{1}{\eta}\bar X(v-v_0).$$
Therefore
$$ (X+\zeta Y)\bigl(\bar Y(v-v_m\eta^m)-\frac{1}{\eta}\bar X(v-v_0)\bigr)=\zeta(v-v_m\eta^m),$$
and consequently (recalling that $X$ is invertible)
$$ v_{i+1}(\zeta)= (\bar X)^{-1}\bigl( (X+\zeta Y)\bar Y-\zeta\bigr)v_i(\zeta), \quad i=0,1,\dots, m-1. $$
Hence $v_0=0$ implies that $v=0$. On the other hand, the coefficient of $\zeta$ in this operator acting on $v_i$ is $Y\bar Y-1$, which has null kernel thanks to Proposition \ref{matrices}. Therefore,
if $v_0\neq 0$, then $v_m$ must have degree at least $m$ in $\zeta$, which contradicts the assumption that $v$ is of bidegree $(m-1,m)$. Hence $\Ker \hat M=0$ and we are done.
\end{proof}


\begin{thebibliography}{99}

\bibitem{A-M}
{D.V. Alekseevsky \and S. Marchiafava}, `Quaternionic structures on a manifold and subordinated structures', {\it
Ann. Mat. Pura Appl. (4)} 171 (1996), 205--273.

\bibitem{A}
{M.F. Atiyah}, `Magnetic monopoles in hyperbolic spaces' in: {\em Vector bundles on algebraic varieties (Bombay, 1984)},  1--33, Tata Inst. Fund. Res., Bombay, 1987.

\bibitem{A2}
{M.F. Atiyah}, `Magnetic monopoles and the Yang-Baxter equation', {\it Internat. J. Modern Phys. A}
6 (1991), 2761--2774.

\bibitem{AM2}
{M.F. Atiyah \and M.K. Murray}, `Monopoles and Yang-Baxter equations', in: {\em Further Advances
in Twistor Theory. Volume II: Integrable Systems, Conformal Geometry and Gravitation}, 
pp. 13–14, 1994.

\bibitem{AW}
{M.F. Atiyah \and R.S. Ward}, `Instantons and algebraic geometry', {\it
Comm. Math. Phys.} 55 (1977), 117--124.



\bibitem{Ba-E} {T.N. Bailey \and  M.G. Eastwood}, `Complex paraconformal manifolds - their differential geometry and twistor theory', {\it Forum Math.} 3 (1991), no. 1, 61--103. 

\bibitem{B-E}
{R.J. Baston \and M.G. Eastwood}, {\it The Penrose transform. Its interaction
with representation theory}, Oxford University Press, Oxford, 1989.

\bibitem{BS2}
{R. Bielawski \and L. Schwachh\"ofer}, in preparation.

\bibitem{BA}
{P.J. Braam \and D.M. Austin}, `Boundary values of hyperbolic monopoles',  {\it Nonlinearity}  3  (1990),  no. 3, 809--823. 

\bibitem{Don}
{S.K. Donaldson}
`Instantons and geometric invariant theory', {\it Comm. Math. Phys.}  93  (1984),  453--460.

\bibitem{E}
{M.G. Eastwood}, `The generalized Penrose-Ward transform', {\it Math. Proc. Camb. Phil. Soc.} 97 (1985), 165--187.


\bibitem{Hit}
{N.J. Hitchin}, `A new family of Einstein metrics', in:  {\it Manifolds and geometry (Pisa, 1993)},  190--222,
 Cambridge Univ. Press, Cambridge, 1996. 

\bibitem{HL}
{D. Huybrechts \and M. Lehn}, {\it The geometry of moduli spaces of sheaves}, Vieweg \& Sohn, Braunschweig, 1997. 


\bibitem{H-M}
{S.A. Hugget \and S.A. Merkulov}, `Twistor transform of vector bundles', {\it Math. Scand.} 85 (1999), 219--244.

\bibitem{JN}
{S. Jarvis \and P. Norbury}, `Zero and infinite curvature limits of hyperbolic monopoles', {\it Bull.
London Math. Soc.} 29 (1997), 737--744.

\bibitem{JV}
{M. Jardim \and M. Verbitsky},
`Moduli spaces of framed instanton bundles on $\cx P^3$ and twistor sections of moduli spaces of instantons on $\cx^2$', arXiv:1006.0440. 

\bibitem{Joyce}
{D. Joyce}, `The hypercomplex quotient and the quaternionic quotient', {\it Math. Ann.}  290  (1991),  323--340.

\bibitem{McL-S}
{D. Maclagan \and G. Smith}, `Multigraded Castelnuovo--Mumford regularity', {\it J. Reine Angew. Math.}  571  (2004), 179--212.

\bibitem{M}
{Yu. I. Manin}, {\it Gauge field theory and complex geometry}, 2nd edition, Springer, Berlin, 1997.



\bibitem{MNS}{M.K. Murray, P. Norbury \and M.A. Singer}, `Hyperbolic monopoles and holomorphic spheres', {\it Ann. Global Anal. Geom.}  23  (2003),  no. 2, 101--128.

\bibitem{MS1} {M.K. Murray \and M.A. Singer}, `Spectral curves of non-integral hyperbolic monopoles', {\it Nonlinearity}  9  (1996),  no. 4, 973--997.

\bibitem{MS2}
{M.K. Murray \and M.A. Singer}, `On the complete integrability of the discrete Nahm equations', {\it Comm. Math. Phys.}  210  (2000),  no. 2, 497--519.

\bibitem{Nash}
{O. Nash}, `A new approach to monopole moduli spaces', {\it  Nonlinearity}  20  (2007),  no. 7, 1645--1675. 

\bibitem{Pal}
{V.P. Palamodov}, `Deformations of complex spaces'. in: {\it Several complex variables. IV.
Algebraic aspects of complex analysis}, Encyclopaedia of Mathematical Sciences 10, pp.105--194, Springer-Verlag, Berlin-New York, 1990.


\bibitem{W} {R.S. Ward} `On self-dual gauge fields', {\it Phys. Lett. A}, 61 (1977), 81--82.

\end{thebibliography}
\end{document}